\pdfoutput=1
\documentclass{amsart}
\usepackage{changepage} % <-- for adjustwidth

\usepackage{empheq}
\usepackage{color}
\usepackage[colorlinks,linkcolor=blue,anchorcolor=blue,citecolor=blue]{hyperref}
\usepackage{amssymb,amstext, amsbsy, amscd}
\usepackage{bbm}
\usepackage[mathscr]{eucal}
\usepackage{times}
\usepackage{amsmath, amsthm, amsfonts,extarrows}
\usepackage{multirow}
\usepackage{booktabs}
\usepackage[all]{xy}
\usepackage{upgreek}%for \updelta
\usepackage{subcaption}%for \subcaption in figure

\usepackage{tikz}
\usepackage{amsxtra}
\usepackage[framemethod=TikZ]{mdframed}

\usepackage{enumitem}% for defining cases

\allowdisplaybreaks[4]
\newtheorem{thm}{Theorem}[section]
\newtheorem{prop}[thm]{Proposition}

\newtheorem{theo}[thm]{Theorem}
\newtheorem{lemma}[thm]{Lemma}
\newtheorem{lemm}[thm]{Lemma}
\newtheorem{coro}[thm]{Corollary}

\newtheorem{prop-defn}[thm]{Proposition/Definition}

\theoremstyle{definition}
\newtheorem{definition}[thm]{Definition}

\newtheorem{exam}[thm]{Example}

\newcommand{\greeklabel}[1]{%
  \ifcase#1\or $\alpha$\or $\beta$\or $\gamma$\or $\delta$\fi}

\newmdenv[
    frametitle={A strategy to prove Gamma II(via Gamma I)}, % This puts the word "Strategy" at the top
    linecolor=black,
    linewidth=1pt,
    innertopmargin=2pt,
    innerbottommargin=10pt,
    skipabove=\baselineskip,
    skipbelow=\baselineskip
]{strategy}

\theoremstyle{remark}
\newtheorem{rema}[thm]{Remark}
\newtheorem{remark}[thm]{Remark}

\newlist{casesp}{enumerate}{3} %% new list environment based on enumerate with a max depth of 3
\setlist[casesp]{align=left, %% alignment of labels
                 listparindent=\parindent, %% same indentation as in normal text
                 parsep=\parskip, %% same parskip as in normal text
                 font=\normalfont\bfseries, %% font used for labels
                 leftmargin=0pt, %% total amount by which text is indented
                 labelwidth=0pt, %% width of labels (=how much they stick out on the left because align=left)
                 itemindent=.4em,labelsep=.4em, %% space between label and text
                 partopsep=0pt, %% extra vertical space above and below if separate paragraph
                 }
\setlist[casesp,1]{label=Case~\Roman*:,ref=\Roman*}
\setlist[casesp,2]{label=Case~\thecasespi.\arabic*:,ref=\thecasespi.\arabic*}
\setlist[casesp,3]{label=Case~\thecasespii.\alph*:,ref=\thecasespii.\alph*}

\newcommand{\bC}{{\mathbb C}}

\newcommand{\bP}{{\mathbb P}}

\newcommand{\bR}{{\mathbb R}}
\newcommand{\bZ}{{\mathbb Z}}

\newcommand{\cO}{{\mathcal O}}

  \newcommand{\<}{\langle}
  \renewcommand{\>}{\rangle}
  \newcommand{\Eff}{\mathrm{Eff}}
\newcommand{\suml}{\sum\limits}
\newcommand{\prodl}{\prod\limits}
\newcommand{\rme}{\mathrm{e}}
\newcommand{\bfd}{{\mathbf{d}}}

\newcommand{\bfq}{{\mathbf{q}}}
\newcommand{\bft}{\mathbf{t}}

\newcommand{\Hom}{\operatorname{Hom}}

\newcommand{\End}{\operatorname{End}}

\newcommand{\Spec}{\operatorname{Spec}}

\renewcommand{\Re}{\operatorname{Re}}
\renewcommand{\Im}{\operatorname{Im}}

\newcommand{\ch}{\operatorname{ch}}

\theoremstyle{plain}

\newtheorem*{GammaII}{Gamma conjecture II}
\newtheorem*{sthm}{Strategy-Theorem}

\usepackage{comment}

\makeindex

\numberwithin{equation}{section}

\title{Gamma conjecture II  via global Gamma-I}

\author{Jianxun Hu }
\address{School of Mathematics, Sun Yat-sen University, Guangzhou 510275, P.R. China}
\email{stsjxhu@mail.sysu.edu.cn}
\thanks{ %The first author  is supported in part by  research grant from the .
 }

\author{Hua-Zhong Ke}
\address{School of Mathematics, Sun Yat-sen University, Guangzhou 510275, P.R. China}
\email{kehuazh@mail.sysu.edu.cn}
\thanks{  
 }

\author{Changzheng Li}
 \address{School of Mathematics, Sun Yat-sen University, Guangzhou 510275, P.R. China}
\email{lichangzh@mail.sysu.edu.cn}

\author[Zhitong Su]{Zhitong Su}
\address{MOE-LCSM, School of Mathematics and Statistics, Hunan Normal University, Changsha 410081, P. R. China}
\email{suzht@hunnu.edu.cn}
\thanks{ %The first author  is supported in part by
 }

\begin{document}

\begin{abstract}
     For a Fano manifold $X$, Gamma conjecture II  aims to use $\mathcal{D}^b_{\rm{coh}}(X)$ to describe the asymptotic behavior of its Dubrovin connection  via $\widehat{\Gamma}$-integral structure. It was proposed by Galkin, Golyshev and Iritani, and can be regarded as  a quantitative refinement of Dubrovin’s conjecture on Fano manifolds with semisimple big quantum cohomology. As a step toward  Gamma conjecture II,  we define the Gamma‑I property at points satisfying the (SR) condition, arising from the original Gamma conjecture I. We prove that this property holds globally in the following sense: if it holds at one such point, then it holds throughout the connected component of the (SR)-region containing that point. Based on this global Gamma-I property, we establish a strategy-type theorem relating Gamma conjecture II to the Gamma-I property at a possibly non-semisimple point, together with an analysis of   small quantum cohomology. We further apply this theorem to prove Gamma conjecture II for del Pezzo surfaces; the proof combines Iritani's Galois action with additional elementary operations on exceptional collections, and   its most technically involved step consists in verifying the required global Gamma-I property.
\end{abstract}
\maketitle
\vspace{-2em}
\tableofcontents

\section{Introduction}
The structural correspondence between Gromov--Witten theory and classical algebraic geometry is a central theme in   symplectic geometry and complex geometry. This relationship has motivated a series of profound conjectures and the development of new categorical frameworks. In his 1998 ICM talk \cite{Dub98}, Dubrovin  proposed a  conjecture for Fano manifolds $X$, describing an  intriguing relationship between the big quantum cohomology $QH_{\rm big}(X)$ and the bounded derived category $\mathcal{D}^b_{\rm coh}(X)$ of coherent sheaves on $X$. Its first part asserts, qualitatively, that $QH_{\rm big}(X)$ is generically semisimple if and only if $\mathcal{D}^b_{\rm coh}(X)$ admits a full exceptional collection. Gamma conjecture II, proposed by Galkin, Golyshev, and Iritani \cite{GGI16}, can be regarded as a refinement of the quantitative part of Dubrovin's conjecture (see also Iritani's 2022 ICM talk \cite{Iri23}). It aims to describe  the Dubrovin connection associated with $QH_{\rm big}(X)$ in terms of $\mathcal{D}_{\rm coh}^b(X)$ via Iritani's Gamma-integral structure, when the latter admits a full exceptional collection.  An independent refinement of Dubrovin's conjecture was also proposed by Cotti, Dubrovin, and Guzzetti \cite{CDG24}.

To describe Gamma conjecture II, we consider the Dubrovin connection $\nabla$. Modulo convergence issues,
it is a meromorphic flat connection on the trivial $H^*(X)$-bundle over $H^*(X)\times\bP^1$ given by
\begin{align*}
    \nabla=\mathrm{d}+\suml_{i=0}^{s-1}{1\over z}(T_i\star_\bft)\mathrm{d}t_i-\left({1\over z}\left(E(\mathbf{t})\star_{\mathbf{t}}\right)-\mu\right){\mathrm{d}z\over z}.
\end{align*}
Here $z$ is the inhomogeneous coordinate on $\bP^1$,  $\{T_i\}_i$ is a homogeneous basis of $H^*(X)$, $\star_{\mathbf{t}}$ denotes the quantum product of $QH_{\rm big}(X)$, $E(\mathbf{t})$ is the Euler vector field, and 
 $\mu\in\End(H^*(X))$ is a grading operator.  
Denote by $K(X)$    the Grothendieck group  of topological complex vector bundles on $X$, and by $\mathcal{S}$   the space  of  $\nabla$-flat sections. The $\widehat{\Gamma}$-integral structure, introduced by Iritani \cite{Iri09}, is the image of a specified group homomorphism $$\mathcal{Z}^K: K(X) \to \mathcal{S},$$ defined  using the Gamma class $\widehat{\Gamma}_X$ \cite{Lib99, Lu08, KKP08, Iri09} of $X$,  a modification of Chern character, and a canonical $\mathbb{C}$-linear isomorphism $\mathcal{Z}: H^*(X)\to \mathcal{S}$ given by Givental's operator. 
   The Dubrovin connection $\nabla$ 
 has two singularities: 
 \begin{enumerate}
     \item[(i)] the regular singularity   at $z=\infty$, where the $\widehat{\Gamma}$-integral structure in $\mathcal{S}$ is involved; 
    \item[(ii)] the irregular singularity at $z=0$, where the Stokes structure in $\mathcal{S}$ is involved and
    characterized by the exponential growth of flat sections.
 \end{enumerate} 
 Gamma conjecture II asserts that these two structures are compatible in the following sense.  
 Assume that $QH_{\rm big}(X)$ is semisimple at   $\mathbf{t}\in H^*(X)$, so that $E\star_{\mathbf{t}}$ has eigenvalues $\{u_i\}_{i=1}^s$  with     corresponding normalized idempotent eigenvectors $\{\Psi_i\}_{i=1}^s\subset H^*(X)$ , where $s=\dim H^*(X)$. We  say that a flat section {\itshape $y\in \mathcal{S}$ respects $(u_i,\Psi_i)$ with the phase $\phi\in \mathbb{R}$ at  $\mathbf{t}$}, if   
\begin{equation*}
    e^{\frac{u_i(\bft)}{z}}y(\bft,z)\longrightarrow \Psi_i,\text{ as } z\rightarrow 0 \text{ along the sector } |\arg z-\phi| <\frac{\pi}{2}+\epsilon \text{ for some } \epsilon>0.
\end{equation*}
Such $y$ uniquely exists when $\phi$ is admissible at $\bft$, i.e. $\rme^{\mathbf{i}\phi}$ is not parallel to any nonzero difference $u_{i_1}(\bft)-u_{i_2}(\bft)$.

\begin{GammaII}[\protect{\cite[Conjecture 4.6.1]{GGI16}}] Assume  that $QH_{\rm big}(X)$ is convergent and semisimple at some $\mathbf{t}\in H^*(X)$ and that $\mathcal{D}^b_{\rm coh}(X)$ admits a   full exceptional collection.   Then for any phase $\phi\in\bR$ admissible at $\bft$, there exists a full exceptional collection $(E_1,\dots,E_s)$ of $\mathcal{D}^b_{\rm coh}(X)$, such that the flat section  $\mathcal{Z}^K(E_i)$ respects $(u_i,\Psi_i)$ at $\mathbf{t}$ with the phase $\phi$ for all $i$.
\end{GammaII} 
\noindent We refer to Section \ref{sec-Statement of Gamma conjecture II} for a more precise review of Gamma conjecture II. 

Currently, the only Fano manifolds for which Gamma conjecture II has been proved are the following: complex Grassmannians via quantum Satake by Galkin, Golyshev  and Iritani \cite{GGI16} (see also \cite{CDG24}), toric Fano manifolds via homological mirror symmetry by Fang and Zhou \cite{FZ19}, and quadrics via direct computations by Hu and Ke \cite{HK23}.\par

As noted in \cite[Section 4.6]{GGI16}, Gamma conjecture II refines part (3) of Dubrovin's conjecture, with the latter consisting of three parts. As recalled above, part (1) asserts that $QH_{\rm big}(X)$ is generically semisimple if and only if $\mathcal{D}^b_{\rm coh}(X)$ admits a full exceptional collection; this statement was made more precise in \cite{Bay04,HMT09}. Parts (2) and (3) concern, respectively, the Stokes matrix and the central connection matrix of the Dubrovin connection.

Besides Gamma conjecture II, Gamma conjecture I and the underlying Conjecture $\mathcal{O}$ were also proposed in \cite{GGI16}. It is a  fundamental fact that for a Fano manifold  $X$,   $\mathcal{D}^b_{\rm coh}(X)$   contains the structure sheaf $\mathcal{O}_X$ as an exceptional object. Gamma conjecture I concerns the asymptotic behavior near $z=0$ of the flat section    corresponding to $\mathcal{O}_X$ at $\mathbf{0}\in H^*(X)$. 
To be more precise, we consider the restriction of    $QH_{\rm big}(X)$ to the small quantum cohomology $QH_{\rm sm}(X)=(H^*(X)\otimes \mathbb{C}[e^{\mathbf{t}^{(2)}}], \star_{\mathbf{t}^{(2)}})$, where   $\mathbf{t}^{(2)}\in H^2(X)$. Accordingly, we have the small Dubrovin connection $\nabla^{\rm sm}$ on the trivial $H^*(X)$-bundle over $H^2(X)\times \mathbb{P}^1$,   the space $\mathcal{S}^{\rm sm}$ of $\nabla^{\rm sm}$-flat sections, as well as the $\widehat{\Gamma}$-integral structure $\mathcal{Z}^{K, \rm sm}: K(X)\to \mathcal{S}^{\rm sm}$. Consider the  linear operator $\hat c_1({\mathbf{t}^{(2)}})=c_1(X)\star_{\mathbf{t}^{(2)}}$ on $H^*(X)$. In that work, it was shown that whenever the spectral radius of $\hat c_1({\mathbf{0}})$ is a simple eigenvalue,  the vector subspace $\mathcal{A}(\mathbf{0})$ of  $\mathcal{S}^{\rm sm}$ with the smallest asymptotics as $z\to 0$ is one-dimensional. Gamma conjecture I asserts that $\mathcal{A}(\mathbf{0})=\mathbb{C}\widehat{\Gamma}_X$, or equivalently, $\mathcal{Z}^{K, \rm sm}(\mathcal{O}_X)(\mathbf{t}^{(2)}, z)|_{\mathbf{t}^{(2)}=\mathbf{0}}\in \mathcal{A}(\mathbf{0})$.

  Gamma conjecture I has been verified in a number of cases, including complex Grassmannians \cite{GGI16},  Fano threefolds of Picard rank one   \cite{GZ16},  Fano complete intersections in $\mathbb{P}^n$   \cite{GI19,SS20,Ke24}, del Pezzo surfaces  \cite{HKLY21}, the blow-up of $\mathbb{P}^n$ along $\mathbb{P}^r$  \cite{Yan22},    and   flag varieties   \cite{Cho25}. 
 In a recent collaboration  \cite{GHIKLS24} between S. Galkin,  H. Iritani and the present authors, counterexamples to the original Gamma conjecture I  were discovered among toric Fano manifolds. Moreover, modified formulations were provided and   proved to hold for all toric Fano manifolds, and   Gamma conjecture I over the K\"ahler moduli  was analyzed for these counterexamples.

 The statement of Gamma conjecture I appears to be  less directly related to Gamma conjecture II, and the existence of   counterexamples might even seem to weaken its importance. Nevertheless,  one of the main insights of the present paper, inspired by the detailed investigation of the counterexamples in \cite{GHIKLS24}, is that Gamma conjecture I is  much more deeply connected with Gamma conjecture II (see the Strategy-Theorem below) than previously understood; moreover, the aforementioned counterexamples do  not affect this new perspective.

We say that $X$ satisfies condition $(\mathrm{SR})$ at $\mathbf{t}^{(2)}\in H^2(X)$, if the linear operator $\hat{c}_1(\bft^{(2)})$ on $H^*(X)$ has a simple rightmost eigenvalue, namely, a simple eigenvalue with strictly largest real part. If such an eigenvalue  exists,  then it  is unique, and is denoted by  $u^{\rm SR}(\bft^{(2)})$. By   \cite[Remark 3.1.9]{GGI16}, the subspace $\mathcal{A}(\bft^{(2)})$ of $\mathcal{S}^{\rm sm}$  that  consists of   $y(\bft^{(2)},z)$ with  $\rme^{{u^{\rm SR}(\bft^{(2)})\over z}}y(\bft^{(2)},z)$ having moderate growth as $z\to0$ along the positive real axis is one-dimensional. 
We then say that 
{\itshape $X$ satisfies property \textbf{Gamma-I} at $\bft^{(2)}\in H^2(X)$}, if   $X$ satisfies condition $(\mathrm{SR})$ at $\bft^{(2)}$ and  $\mathcal{Z}^{K,{\rm{sm}}}(\cO)\in\mathcal{A}(\bft^{(2)})$. Note that the validity of the original Gamma conjecture I is a special case when $X$ satisfies Gamma-I at $\mathbf{0}$, and that the aforementioned counterexample does satisfy Gamma-I at some nonzero $\bft^{(2)}\in H^2(X)$.   By the $(\mathrm{SR})$-region of $X$, we mean the subset of $H^2(X)$ consisting of points at which  $X$ satisfies condition $(\mathrm{SR})$.   As one of the main results, we prove the following theorem in \textbf{Theorem \ref{thm--gammaIoverdomain}}, which   partially answers the third question in \cite[Question 6.12]{GHIKLS24}.

\begin{theo}[Global Gamma-I]\label{thm-global gamma-I} Let  $U$ be a  domain inside  the  $(\mathrm{SR})$-region of $X$.  If $X$ satisfies property   Gamma-I  at some point of $U$, then it satisfies property   Gamma-I  at every point of $U$. 
\end{theo}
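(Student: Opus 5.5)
The argument is a connectedness one: show that the set $V\subset U$ of points where Gamma-I holds is both open and closed in $U$. Since $U$ is a domain and $V\neq\emptyset$ by hypothesis, this gives $V=U$.

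The key tool is a characterization of $\mathcal{A}(\bft^{(2)})$ that varies continuously in $\bft^{(2)}$. On the $(\mathrm{SR})$-region, the simple eigenvalue $u^{\rm SR}(\bft^{(2)})$ is a holomorphic function, as is the corresponding eigenprojection, by analytic perturbation theory for a simple eigenvalue. I would work in a small ball $B\subset U$ and write flat sections through the fundamental solution $\mathcal{Z}$, or more precisely through Givental's $J$-function and the Laplace transform. The idea is to recast membership in $\mathcal{A}(\bft^{(2)})$ as a limit formula. Concretely, by \cite[Remark 3.1.9]{GGI16} (or its proof), a flat section $y$ lies in $\mathcal{A}(\bft^{(2)})$ if and only if the projection of $y$ onto the complement of the rightmost eigenline is exponentially subdominant relative to $\rme^{-u^{\rm SR}/z}$ along $z\to0^+$. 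Equivalently, the component of the "inverse" of $y$ under the Stokes/asymptotic decomposition vanishes.

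To make this uniform, I would fix a sector around the positive real axis that is admissible simultaneously for all $\bft^{(2)}\in B$ after shrinking $B$. This is possible because $\Re u^{\rm SR}$ stays strictly larger than the real parts of the other eigenvalues on $B$, so the rightmost eigenvalue is separated from the others in the real direction. In this sector, the asymptotic expansion theory for irregular singular connections with a simple eigenvalue separated from the rest (Hukuhara–Turrittin, or Sibuya's theorem with holomorphic dependence on parameters) produces a unique flat section $y_B(\bft^{(2)},z)$ depending holomorphically on $\bft^{(2)}\in B$ and spanning $\mathcal{A}(\bft^{(2)})$ for every $\bft^{(2)}\in B$. This uniform construction across a neighborhood, rather than pointwise, is the main obstacle. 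The difficulty is that the other eigenvalues may collide and $\hat c_1$ need not be semisimple, so I would only split off the one-dimensional rightmost block. The block-diagonalization theorem of Sibuya/Wasow, which handles a $1\oplus(s-1)$ splitting with holomorphic parameter dependence, gives a gauge transformation asymptotic to the eigenprojection, and from it the flat section $y_B$.

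Given $y_B$, I would consider the $\nabla^{\rm sm}$-flat section $\mathcal{Z}^{K,\rm sm}(\cO)$, which is globally defined on $H^2(X)$. Because the space of flat sections over the simply connected $B$ is a local system, both $\mathcal{Z}^{K,\rm sm}(\cO)$ and $y_B$ are sections of it. Gamma-I at $\bft^{(2)}\in B$ is then the condition that these two flat sections are proportional. Since both are flat, proportionality is a condition on the flat structure that is independent of the point. Indeed, the wedge $\mathcal{Z}^{K,\rm sm}(\cO)\wedge y_B$ is a flat section of $\wedge^2$, so it vanishes at one point of $B$ if and only if it vanishes at all points of $B$. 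Thus $V\cap B$ is either empty or all of $B$.

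Covering $U$ by such balls, $V$ is therefore open, and $U\setminus V$ is also open, so $V$ is closed in $U$. Connectedness of $U$ then gives $V=U$.
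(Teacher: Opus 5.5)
Your reduction matches the paper's: the theorem amounts to the line $\mathcal{A}(\mathbf{t}^{(2)})\subset\mathcal{S}^{\rm sm}$ being locally constant on the $(\mathrm{SR})$-region, and connectedness then finishes. Your open--closed argument plays the role of the paper's simply connected $U'\ni\mathbf{t}^{(2)}_0,\mathbf{t}^{(2)}_1$.

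\textbf{The gap.} It lies in the one step that carries the content, the existence of $y_B$. Sibuya's block-diagonalization with holomorphic parameters applies to the $z$-equation $z^2\partial_z y=(\hat{c}_1(\mathbf{t}^{(2)})-z\mu)y$, with $\mathbf{t}^{(2)}$ treated as a parameter. It therefore gives, for each $\mathbf{t}^{(2)}\in B$, a solution $y_B(\mathbf{t}^{(2)},\cdot)$ of this ODE depending holomorphically on $\mathbf{t}^{(2)}$. Nothing in it controls $\nabla_{t_i}y_B$.
\begin{itemize}
\item Such a family is a holomorphic curve $\mathbf{t}^{(2)}\mapsto Y(\mathbf{t}^{(2)})\in\mathcal{S}^{\rm sm}$ with $\mathbb{C}Y(\mathbf{t}^{(2)})=\mathcal{A}(\mathbf{t}^{(2)})$, which only restates $\dim\mathcal{A}=1$.
\item That this curve stays on a fixed line is precisely the theorem.
\item So treating $y_B$ as a section of the local system, and using the flat wedge $\mathcal{Z}^{K,{\rm sm}}(\mathcal{O})\wedge y_B$, assumes what is to be proved.
\item You also cannot simply transport pointwise asymptotics in $\mathbf{t}^{(2)}$, because the equations $z\partial_{t_i}y=-T_i\star_{\mathbf{t}^{(2)}}y$ are themselves singular at $z=0$.
\end{itemize}

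\textbf{A repair.} Integrability plus uniformity suffices.
\begin{itemize}
\item Since $\nabla^{\rm sm}$ is flat, $\nabla_{t_i}y_B(\mathbf{t}^{(2)},\cdot)$ again solves the $z$-equation at $\mathbf{t}^{(2)}$.
\item Sibuya's theorem does give uniform bounds: write $y_B=\rme^{-u/z}g$ with $|g|\le C|z|^{-N}$ on $B\times S$. Cauchy estimates in $\mathbf{t}^{(2)}$ then yield, on a smaller ball,
\[
\rme^{u/z}\,\nabla_{t_i}y_B=\partial_{t_i}g+\frac{1}{z}\bigl(T_i\star_{\mathbf{t}^{(2)}}-\partial_{t_i}u\bigr)g=O\bigl(|z|^{-N-1}\bigr).
\]
\item Hence $\nabla_{t_i}y_B(\mathbf{t}^{(2)},\cdot)\in\mathcal{A}(\mathbf{t}^{(2)})$, and so $\nabla_{t_i}y_B=c_i(\mathbf{t}^{(2)})\,y_B$. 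This says exactly that $Y(\mathbf{t}^{(2)})$ moves inside one line.
\item If you want $y_B$ genuinely flat, rescale by $\exp(-\int c)$; this is possible because flatness gives $\partial_{t_i}c_j=\partial_{t_j}c_i$.
\end{itemize}
With this added, your proof is complete.

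\textbf{Comparison with the paper.} The paper sidesteps the issue by working with the Laplace-dual connection $\widehat{\nabla}^{\rm sm}$. It is integrable in $(\mathbf{t}^{(2)},\lambda)$ and has a logarithmic pole with nilpotent residue along $\lambda=u(\mathbf{t}^{(2)})$. Lemmas \ref{lemma-partialforuinsmallquantum} and \ref{lemma--localholomorphicflatsectionofsmalldualconn} give a holomorphic flat section $\hat y$ with $\hat y(\mathbf{t}^{(2)},u(\mathbf{t}^{(2)}))=\Psi(\mathbf{t}^{(2)})$. Its Laplace transform is flat in all variables and respects $(u,\Psi)$ near every point (Lemma \ref{lemma: existy}), so flatness in $\mathbf{t}^{(2)}$ and uniform asymptotics come out together. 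Your route trades this for off-the-shelf parametric ODE asymptotics plus the short integrability argument. Working with lines rather than with $\Psi$ also spares you any global choice of sign of the normalized idempotent.
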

\noindent This leads to the following strategy-type theorem (i.e. \textbf{Theorem \ref{thm:strategy}}). Let $K_{ts}\subset H^2(X)$ be the set of  {tame semisimple} points in the K\"ahler moduli, i.e. for $\bft^{(2)}\in K_{ts}$, $\hat{c}_1(\bft^{(2)})$ has only simple eigenvalues. 
\begin{sthm}
    Let $X$ be a  Fano manifold. Assume the following conditions. 
    \begin{enumerate}
[leftmargin=0pt, itemindent=0.5em, labelsep=0.5em]
    \item The big quantum cohomology of $X$ is convergent near the large radius limit.
    \item There exists $\bft^{(2)}_0\in H^2(X)$ at which $X$ satisfies property Gamma-I.
       \item There exists a domain $U$ inside the $(\mathrm{SR})$-region of $X$ such that  $\bft^{(2)}_0\in U$ and $U\cap K_{ts}\neq \varnothing$.
        
    \item  Denote $V_1:=\cO$. There exists $\bft^{(2)}_1\in U\cap K_{ts}$ such that:
    \begin{enumerate}
    \item there exist   $V_2,\dots,V_{s} \in \mathcal{D}^b_{\rm coh}(X)$ such that each $V_i$ is obtained from $\cO$ via the Galois action along some path $\gamma_i\subset H^2(X)$ starting at the same point  $\bft^{(2)}_1$;
        \item there exists a full exceptional collection $(\tilde{V}_1, \dots, \tilde{V}_s)$ in $\mathcal{D}^b_{\rm coh}(X)$, obtained from    the set $\{V_1, \dots, V_s\}$ by ``elementary operations";
        \item  there exists an admissible phase $\phi$ at $\bft^{(2)}_1$ such that $\mathcal{Z}^{K, \mathrm{sm}}(\tilde{V}_i)$ respects the pair $(u_i, \Psi_i)$ at $\bft^{(2)}_1$ with phase $\phi$ for all $i$, where the $u_i$ are ordered so that $\Im(\rme^{-\mathbf{i}\phi}u_1(\bft^{(2)}_1))>\dots>\Im(\rme^{-\mathbf{i}\phi}u_{s}(\bft^{(2)}_1))$.
     \end{enumerate}
 
\end{enumerate}    
    Then Gamma conjecture II holds for $X$.
\end{sthm}

There are several notable features of the above theorem, whenever it is applicable.  \begin{enumerate}
[leftmargin=2.1em, itemindent=0em, labelsep=0.5em]
        \item[i)] The starting point $\mathbf{t}_0^{(2)}$ may be  a non-semisimple point. Indeed, for  the del Pezzo surfaces $X_r$   investigated below, the quantum cohomology is non-semisimple at the starting point $\mathbf{t}_0^{(2)}=\mathbf{0}$ when $5\leq r\leq 8$ \cite{BM04}.

    \item[ii)] The validity of property Gamma-I 
    relies only on the small quantum cohomology and  does not require convergence of the big quantum cohomology. In particular, both $\mathbf{t}_0^{(2)}$ and $\mathbf{t}_1^{(2)}$ may lie outside   the domain of convergence of the big quantum cohomology.

    \item[iii)] The assumption (4.a) provides a strategy for constructing as many exceptional objects as possible, using the Galois action established by Iritani \cite{Iri09}. The obtained objects are ``initial data'' for (4.b), where the ``elementary operations'' involved  require individual treatment, including    mutations, taking differences, and related procedures, and therefore cannot be  defined  uniformly in a precise way.  

   \item[iv)] Under the strategy-style assumptions (2), (3), (4.a) and (4.b), the Strategy-Theorem becomes a reconstruction theorem, rather than merely a strategy, once assumption (4.c) is imposed; this is what we prove  in Theorem \ref{thm:strategy}.  
   
\end{enumerate}

\begin{rema}
The  method of combining Gamma conjecture I with the Galois action  has been  used to study Gamma conjecture II in \cite{GGI16, Iri20}. The main distinctions between these earlier applications and our Strategy-Theorem are the features described above, beyond  the use of Galois action.  The heart of the first half of the above theorem is the  Global Gamma-I  theorem. It  guarantees that  one can move from a starting point $\mathbf{t}_0^{(2)}$  to a  possibly  distant endpoint $\mathbf{t}_1^{(2)}$, although  a major difficulty in applying the  theorem   lies in finding such  a path in the $(\mathrm{SR})$-region.  Motivated by  
\cite[Sections 6 and 7]{GHIKLS24}, the  endpoint
  may be chosen near a boundary region of the
      K\"ahler moduli space where the spectrum of
$c_1(X)\star_{\mathbf{t}^{(2)}}$ organizes itself  in a way that reflects the geometry of
$X$ and makes condition (4) in the Strategy-Theorem accessible. 

 \end{rema}

\begin{rema}
     Assumption (3) implicitly requires   the small quantum cohomology to be  generically semisimple, and the current strategy works primarily when $\mathcal{D}^b_{\rm coh}(X)$ is generated by line bundles.  The above theorem provides a transparent   approach to studying Gamma conjecture II in this setting. We anticipate that the Strategy-Theorem may admit   generalizations to  Fano manifolds with non-semisimple small quantum cohomology and to cases in which $\mathcal{D}^b_{\rm coh}(X)$ is not generated by line bundles, by investigating an analogue of  the Global Gamma-I property in a more general setting.
\end{rema}  
 
It is known that a 2-dimensional  Fano manifold is isomorphic to $\mathbb{P}^1\times \mathbb{P}^1$, $\mathbb{P}^2$, or a del Pezzo surface $X_r$ (i.e. the blow-up of $\mathbb{P}^2$ at $r$ general points) where $1\leq r\leq 8$.  The del Pezzo surface $X_r$ is toric if and only if $1\leq r\leq 3$. Since its proposal in \cite{GGI16}, Gamma conjecture II has remained open for (non-toric) del Pezzo surfaces for a decade, even though   the geometry of these low-dimensional cases is relatively well understood. 
As another highlight of this paper, 
 we prove the following theorem, by verifying  all   assumptions of the Strategy-Theorem in a precise manner. The proof involves   a study of Puiseux expansions of eigenvalues of $c_1(X)\star_{\mathbf{t}^{(2)}}$. 
 \begin{theo}\label{thm: del Pezzo}
    Gamma conjecture II holds for all del Pezzo surfaces.
\end{theo}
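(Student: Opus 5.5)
The plan is to verify, for each del Pezzo surface $X_r$ ($1\le r\le 8$), the hypotheses (1)--(4) of the Strategy-Theorem. The cases $\mathbb{P}^2$, $\mathbb{P}^1\times\mathbb{P}^1$ and the toric $X_1,X_2,X_3$ are already covered by \cite{FZ19}, so the real content is $4\le r\le 8$, though the argument should work uniformly. Hypothesis (1) follows from the known convergence of the big quantum cohomology of surfaces near the large radius limit, via the WDVV reconstruction from the Göttsche--Pandharipande recursion for $X_r$. For (2), I take $\bft^{(2)}_0=\mathbf{0}$ and use the proof of Gamma conjecture I for del Pezzo surfaces in \cite{HKLY21}. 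This says exactly that $X_r$ satisfies (SR) at $\mathbf{0}$ and that $\mathcal{Z}^{K,\rm sm}(\cO)\in\mathcal{A}(\mathbf{0})$. This holds even for $5\le r\le 8$, where $\mathbf{0}$ is non-semisimple.

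For (3) and (4) I choose a one-parameter path $\bft^{(2)}(\tau)=\tau\,\beta$ in $H^2(X_r)$, or a two-step path. The natural choice shrinks all exceptional curves: $\bft^{(2)}=-\tau\sum_j e_j^\vee$ with $\Re\tau\to+\infty$, i.e. $q_j=\rme^{-\tau}\to 0$ for the exceptional classes while keeping the hyperplane parameter fixed. In this limit $c_1\star$ degenerates to $\hat c_1$ of $\mathbb{P}^2$ plus $r$ eigenvalues coming from the exceptional curves. I expect these to be of size roughly $\rme^{\tau}$, or given by a Puiseux series in $q_j^{1/2}$ or $q_j^{1/3}$. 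I will compute the leading terms of these Puiseux expansions from the explicit low-degree Gromov--Witten invariants of lines, conics and exceptional curves. The aim is to show two things along the whole path: the eigenvalue $3\cdot(\text{root})$ inherited from $\mathbb{P}^2$ stays simple and strictly rightmost, and near the endpoint all $r+3$ eigenvalues are distinct. This gives the domain $U$ (a neighborhood of the path) inside the (SR)-region, with $U\cap K_{ts}\neq\varnothing$. Theorem \ref{thm-global gamma-I} then transports Gamma-I from $\mathbf{0}$ to the endpoint $\bft^{(2)}_1$.

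At $\bft^{(2)}_1$ I use Iritani's Galois action. Moving $\bft^{(2)}$ around loops $\bft^{(2)}\mapsto\bft^{(2)}+2\pi\mathbf{i}D$ tensors $\cO$ by line bundles $\cO(D)$. Combining these with monodromy around the discriminant in the exceptional direction produces objects with the classes of $\cO_{E_j}(-1)$ or similar. This gives $V_2,\dots,V_s$: the line bundles $\cO(H),\cO(2H)$, plus objects attached to the $r$ exceptional curves, obtained by suitable paths $\gamma_i$. I then apply elementary operations (differences such as $\cO(E_j)-\cO$, and mutations) to reach the standard Orlov collection $(\cO_{E_1}(-1),\dots,\cO_{E_r}(-1),\cO,\cO(H),\cO(2H))$, or a mutation of it, which is full exceptional. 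For (4.c), I verify the asymptotics at $\bft^{(2)}_1$ directly: each Galois-transported section inherits the exponential behaviour $\rme^{-u_i/z}$ from $\cO$, because the Galois action continues the Gamma-I asymptotics along $\gamma_i$. Taking differences is compatible because the subtracted pieces have strictly smaller exponential growth in the chosen sector. One then chooses $\phi$ so that the imaginary-part ordering of the $u_i$ matches the order of the collection.

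The main obstacle is step (3): exhibiting a path from $\mathbf{0}$ to the semisimple endpoint along which the rightmost eigenvalue stays simple, particularly for $r=7,8$. There the spectrum at $\mathbf{0}$ is highly degenerate and the eigenvalue behaviour is only accessible through Puiseux expansions. I would first try a straight-line path and bound the characteristic polynomial of $c_1\star$ away from collisions with the rightmost root, using explicit estimates of its coefficients in terms of $\rme^{-\tau}$. If that fails, I would split the path into a short segment near $\mathbf{0}$, handled by perturbation, and a long segment near the boundary, handled by the Puiseux asymptotics.
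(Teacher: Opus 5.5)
Your overall architecture matches the paper's: the Strategy-Theorem, Gamma-I at $\mathbf 0$ from the known Gamma conjecture I, and Galois loops followed by differences producing the classes $\cO_{E_k}(-1)$. The genuine gap is hypothesis (3), which is where the proof actually lives, and for which you give no mechanism. Perturbation near $\mathbf 0$ and Puiseux asymptotics near the boundary only control the two ends of the path. (SR) on the whole intermediate region is exactly what has to be shown, and ``bounding the characteristic polynomial away from collisions'' is not an argument.

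The missing idea is positivity plus Perron--Frobenius. Work in the basis $[\mathbf 1,aH-\sum_iE_i,E_1,\dots,E_r,[pt]]$ with $a=a_r^*=rd_{r1}/(3d_{r1}-1)$, where $d_{r1},d_{r2}$ come from the genus bound of Lemma~\ref{lemma of table}. In this basis every off-diagonal entry of $\hat c_1(\bft^{(2)})$ is positive except the $(r+3,1)$ entry, which vanishes. At $q_i=1$ this holds for all $q\in(0,1]$, using the divisor axiom, the symmetrization of Lemma~\ref{lemma-symmetrizedsum} and Cauchy--Schwarz. For small $q$ with $q_i\in[1,1+\delta_r]$ it holds by examining only the $q^0$ and $q^1$ terms. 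Hence $M+cI$ is primitive, and Perron--Frobenius gives a simple real eigenvalue strictly exceeding the modulus of all others. That is (SR), uniformly on the connected set $V_{\delta_r}\ni\mathbf 0$.

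There are also concrete errors at the endpoint.

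First, your path $\bft^{(2)}=-\tau\sum_je_j^\vee$ is invariant under permuting the $E_i$, and so are the Gromov--Witten invariants of $X_r$. Hence $\hat c_1(\bft^{(2)})$ commutes with the $\mathfrak{S}_r$-action on $H^*(X_r)$. By Schur's lemma it acts by a scalar on the span of the $E_i-E_j$, an irreducible summand of multiplicity one, so it has an eigenvalue of multiplicity $\ge r-1$. Thus for $r\ge 3$ no point of your path lies in $K_{ts}$, contrary to your claim that all $r+3$ eigenvalues separate near the endpoint. The symmetry must be broken; the paper takes $\bar q_i=1+\frac{i}{r}\delta_r$. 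This is why (SR) must be proved on a box in the $q_i$, and why quantitative eigenvalue bounds (Proposition~\ref{prop-estimateofeigevalueswhenrotating}) are needed to track the Galois loops.

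Second, your predicted sizes are off. In the regime that works ($q\to 0$ with $q_i$ near $1$) the eigenvalues are $3\omega^{k-1}(q\prod q_i)^{1/3}+o(q^{1/3})$ and $-q_k+o(1)$, and none grows like $\rme^{\tau}$.

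Third, your justification of (4.c), that the subtracted pieces have smaller exponential growth, fails. Respecting $(u_i,\Psi_i)$ is a statement on a sector of opening $>\pi$, and such a sector always contains rays where the dominance of $\rme^{-u_a/z}$ over $\rme^{-u_b/z}$ flips. What is needed is the exact right-mutation identity $\mathcal{Z}^{K,\mathrm{sm}}(\cO(-H))=y'_{3+k}-[y'_{3+k},y_{3+k})\,y_{3+k}$, with $y'_{3+k}=\pm\mathcal{Z}^{K,\mathrm{sm}}(\cO(E_k-H))$. You then need the integrality and sign argument (Lemma~\ref{lem-determine coefficients} together with $[y_2,y_{3+k})=0$) to identify $y_{3+k}$ with $\pm\mathcal{Z}^{K,\mathrm{sm}}(L_{\cO(-H)}\cO(E_k-H))$.
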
 
 
Part (1) of Dubrovin's conjecture for del Pezzo surfaces $X_r$ was established in \cite{BM04}. As shown in  \cite[Proposition 4.6.6]{GGI16}, Gamma conjecture II implies parts (2) and (3) of Dubrovin's conjecture. Therefore, we obtain the following result, as an immediate corollary of the above theorem.
 \begin{coro}
      Dubrovin's conjecture  holds for all del Pezzo surfaces.
 \end{coro}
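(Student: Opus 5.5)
The corollary will follow by combining Theorem \ref{thm: del Pezzo} with \cite[Proposition 4.6.6]{GGI16} and the known results on part (1). The plan has three steps.

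\textbf{Step 1: part (1).} For all del Pezzo surfaces $X_r$, $1\le r\le 8$, the big quantum cohomology $QH_{\rm big}(X_r)$ is generically semisimple. Also, $\mathcal{D}^b_{\rm coh}(X_r)$ admits a full exceptional collection; for example, Orlov's blow-up formula gives collections made of line bundles. Both facts are established in \cite{BM04}, together with the converse direction needed for part (1). The remaining Fano surfaces $\mathbb{P}^2$ and $\mathbb{P}^1\times\mathbb{P}^1$ are toric, so they are covered by \cite{BM04} as well as by the toric results.

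\textbf{Step 2: Gamma conjecture II.} Fix a semisimple point $\mathbf{t}$ in the convergence domain of $QH_{\rm big}(X_r)$ and an admissible phase $\phi$. Theorem \ref{thm: del Pezzo} then supplies a full exceptional collection $(E_1,\dots,E_s)$ such that each $\mathcal{Z}^K(E_i)$ respects $(u_i,\Psi_i)$ with phase $\phi$.

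\textbf{Step 3: parts (2) and (3).} We apply \cite[Proposition 4.6.6]{GGI16}. It says that the asymptotically exponential basis $\{\mathcal{Z}^K(E_i)\}$ computes two things.
\begin{enumerate}
\item The Stokes matrix equals the Gram matrix $\big(\chi(E_i,E_j)\big)$ of the Euler pairing, up to the standard sign and transposition conventions. This is because the pairing on flat sections induced by the Poincar\'e pairing agrees with $\chi$ under $\mathcal{Z}^K$. This gives part (2).
\item The central connection matrix is determined by the vectors $\widehat{\Gamma}_X\,\ch(E_i)$, suitably twisted by $(2\pi\mathbf{i})^{\deg/2}$. This gives part (3).
\end{enumerate}

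The only point needing care is that the hypotheses of \cite[Proposition 4.6.6]{GGI16} match exactly the conclusion of Gamma conjecture II as stated in Section \ref{sec-Statement of Gamma conjecture II}. These hypotheses are convergence of the big quantum cohomology, semisimplicity at $\mathbf{t}$, and admissibility of $\phi$. In particular, convergence for del Pezzo surfaces must be available; it is assumed within the proof of Theorem \ref{thm: del Pezzo} via assumption (1) of the Strategy-Theorem.
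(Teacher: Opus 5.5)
Your proposal is correct and follows the paper's argument. Part (1) is taken from \cite{BM04}, and parts (2) and (3) follow by combining Theorem \ref{thm: del Pezzo} with \cite[Proposition 4.6.6]{GGI16}; one small correction is that convergence near the large radius limit is not assumed but verified, via \cite[Corollary 5.9]{Iri07}.
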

\noindent We remark that    part (2) of Dubrovin's conjecture was shown for $X_6$ \cite{Ued05}.  

    Gamma conjecture II can be formulated straightforwardly for smooth projective manifolds with semisimple big quantum cohomology. 
    Let $S_r$ be an $r$-fold blow-up of $\bP^2$, i.e., $S_r$ is obtained by $S_r\xrightarrow{b_r}S_{r-1}\xrightarrow{b_{r-1}}\cdots\xrightarrow{b_2}S_1\xrightarrow{b_1}\bP^2$, where each $b_i$ is a  blow-up at one point. The rational surfaces are not Fano in general, but they can be deformed to $X_r$ when $r\leq 8$. As an application of Theorem \ref{thm: del Pezzo}, we obtain
\begin{coro}
    Gamma conjecture II holds for $S_r$ with $1\le r\le 8$.
\end{coro}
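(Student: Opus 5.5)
The statement to prove is the last corollary: Gamma conjecture II for the rational surfaces $S_r$ with $1\le r\le 8$. The plan is to deduce it from Theorem \ref{thm: del Pezzo} by deformation invariance of all data entering Gamma conjecture II.

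First, $S_r$ is deformation equivalent to the del Pezzo surface $X_r$ for $r\le 8$. Choose a smooth projective family $\mathcal{S}\to B$ over a connected base with one fiber $S_r$ and another fiber $X_r$; one can obtain it by moving the blown-up points (including infinitely near ones) into general position. By Ehresmann's theorem the fibers are diffeomorphic, and the diffeomorphism can be chosen to respect the deformation classes of the complex structures. The following then agree under the induced identifications of $H^*(X_r)$ with $H^*(S_r)$ and of $K(X_r)$ with $K(S_r)$, the latter being the topological $K$-group:
\begin{itemize}
\item Gromov--Witten invariants, hence $QH_{\rm big}$ together with its convergence and semisimplicity loci;
\item the Euler field $E$, the grading $\mu$, and the Chern classes;
\item $\widehat{\Gamma}$ and $\mathcal{Z}^K$.
\end{itemize}
Consequently the Dubrovin connections coincide, and the asymptotic condition ``$\mathcal{Z}^K(E_i)$ respects $(u_i,\Psi_i)$ with phase $\phi$ at $\bft$'' depends only on the class $[E_i]\in K$.

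It remains to transport full exceptional collections. Given a phase $\phi$ admissible at a semisimple $\bft$, Theorem \ref{thm: del Pezzo} provides a full exceptional collection $(E_1,\dots,E_s)$ on $X_r$ whose classes give the required asymptotics. I will argue that every full exceptional collection of $X_r$ deforms over $B$ to one on $S_r$ with the same $K$-classes.

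The key tools are the following.
\begin{itemize}
\item Exceptional objects are rigid. Since $H^2(\mathcal{O})=0$ and $\mathrm{Ext}^2(E,E)=0$ for exceptional objects on these surfaces, obstructions vanish and exceptional objects extend uniquely over the base. For rational surfaces, exceptional objects are shifts of bundles or torsion sheaves.
\item Exceptionality and semiorthogonality are open conditions by upper semicontinuity of $\mathrm{Ext}$ groups.
\item Fullness is preserved because the $K$-classes form a basis of $K_0$ in the family.
\end{itemize}

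Openness only shows that the collection survives near the del Pezzo fiber, whereas $S_r$ may be a special fiber, so a specialization step is needed. The cleanest route I see is the following. A full exceptional collection on a rational surface is determined up to isomorphism by its numerical classes (Gorodentsev--Kuleshov). Moreover, the braid group together with shifts acts transitively on full exceptional collections of $X_r$ and of $S_r$ (Kuleshov--Orlov, and its extension to weak del Pezzo surfaces). Compatible mutations on both sides have the same effect on $K$-classes. So it suffices to compare one explicit collection, for instance $(\mathcal{O},\mathcal{O}(E_1),\dots,\mathcal{O}(E_r),\mathcal{O}(H),\mathcal{O}(2H))$ type Orlov collections, which exists on both $S_r$ and $X_r$ with matching classes. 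Transport the collection of Theorem \ref{thm: del Pezzo} by writing it as a braid-and-shift image of that standard collection and applying the same mutations on $S_r$.

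The main obstacle is this transitivity on the possibly non-Fano surface $S_r$, which I would handle by citing or adapting Kuleshov--Orlov.
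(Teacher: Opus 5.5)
Your argument is correct, but it reaches the conclusion by a different route than the paper, and it needs less than you think.

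\textbf{Where the two proofs differ.} The paper never transports objects across the family. It pulls back only the exceptional basis $\{V_i\}$ of $(K(S_r),\chi)$ supplied by Theorem \ref{thm: del Pezzo}. It then invokes Krah's numerical transitivity \cite[Theorem 1.2 (ii)]{Kra24}: any two exceptional bases of $(K(S_r),\chi)$ differ by mutations and sign changes. This connects $\{V_i\}$ to the classes of an Orlov collection on $S_r$, and the paper then performs those mutations and shifts in $\mathcal{D}^b(S_r)$.

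You instead use categorical transitivity on the Fano fiber. By Kuleshov--Orlov, the collection from Theorem \ref{thm: del Pezzo} equals $\beta\cdot(F_1,\dots,F_s)$ up to shifts, where $(F_i)$ is Orlov's blow-up collection on $X_r$. Apply the same braid word and shifts to Orlov's blow-up collection $(F_i')$ on $S_r$; its classes match those of $(F_i)$ once total transforms of the exceptional curves are identified with the $E_i$. The result is a full exceptional collection on $S_r$ with exactly the original classes. This holds because mutations preserve full exceptional collections on any variety, and they act on classes through $[L_EF]=[F]-\chi(E,F)[E]$, where $\chi$ is topological.

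Both proofs end by mutating a known full collection on $S_r$; they differ in where transitivity is invoked. Yours rests on a classical theorem available exactly on the del Pezzo fiber, together with the family-wise matching of Orlov collections. The paper's is purely lattice-theoretic on $S_r$ and needs no categorical information about $X_r$ beyond Theorem \ref{thm: del Pezzo}.

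\textbf{What to drop from your sketch.} Your argument never uses transitivity on $S_r$, so your announced ``main obstacle'' is not one. That is fortunate, since $S_r$ need not even be weak del Pezzo, for instance when four of the points are collinear. Several auxiliary claims should be discarded rather than relied on:
\begin{itemize}
\item Exceptional objects on $S_r$ are not determined by their classes once $S_r$ contains a $(-2)$-curve $C$. The square of the spherical twist along $\mathcal{O}_C(-1)$ acts trivially on $K(S_r)$ but moves exceptional line bundles $L$ with $\mathrm{Ext}^\bullet(\mathcal{O}_C(-1),L)\neq 0$.
\item ``Fullness is preserved because the classes form a basis'' is not a valid inference on rational surfaces in general.
\item The rigidity/openness paragraph can be removed entirely; the mutation argument does not need it.
\end{itemize}
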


 We anticipate that our Strategy-Theorem has broader applications, thanks to the   extensive existing literature on Gamma conjecture I. Indeed, combined with mirror symmetry, it can be applied to verify Gamma conjecture II for Milnor hypersurfaces  (i.e. smooth  degree-$(1, 1)$ hypersurfaces in $\mathbb{P}^n\times \mathbb{P}^m$) \cite{HKLS26}.

Let us briefly describe how to apply the Strategy-Theorem to prove Theorem \ref{thm: del Pezzo}. We need to show that conditions (1)-(4) are satisfied. Condition (1) follows from the convergence criterion of Iritani \cite[Corollary 5.9]{Iri07}. Condition (2) is satisfied by  choosing $\bft^{(2)}_0=\mathbf{0}$ and using the original Gamma conjecture I proved in \cite{HKLY21}. For condition (4), we choose the endpoint $\bft^{(2)}_1\in K_{ts}$ near the boundary of the K\"ahler moduli reflecting the blow-up structure of $X_r$ from $\bP^2$; heuristically, this corresponds to letting the volumes of the exceptional divisors tend to infinity. After applying Galois actions to find initial objects $V_1,\dots,V_s$ as in (4.1), we succeed in finding ``elementary operations" on these initial objects to produce a full exceptional collection $(\tilde{V}_1,\dots,\tilde{V}_s)$ as in (4.2), which satisfies condition (4.3). 

The most technically involved step in the proof of Theorem \ref{thm: del Pezzo} is to verify condition (3), i.e., to find a suitable domain $U$ inside the $(\mathrm{SR})$-region of $X$ containing both $\mathbf{t}_0^{(2)}$ and $\mathbf{t}_1^{(2)}$. This depends on the distribution of the eigenvalues of $c_1(X)\star_{\mathbf{t}^{(2)}}$. The Perron--Frobenius theorem for nonnegative matrices plays a key role, in analogy with the proof of Conjecture $\mathcal{O}$ for flag varieties  \cite{ChLi17}.
We achieve this by reducing to a positivity problem for a matrix representing $c_1(X_r)\star_{\mathbf{t}^{(2)}}$.  As $\mathbf{t}^{(2)}$ moves from the origin toward  the boundary corresponding to the blow-up structure of $X_r$ from $\mathbb{P}^2$, we choose a basis
$\mathbf{1}, aH-\sum_iE_i, E_1,\cdots,E_r, [pt]$,
with a suitable parameter $a>0$. To apply the Perron--Frobenius theorem, we need careful entrywise estimates in this basis along a specific path approaching the boundary; these are based on the two key observations \ref{observation alpha} and \ref{observation beta} made in Section 5.2. They produce the desired domain inside the $(\mathrm{SR})$-region, where the Perron--Frobenius theorem gives the required simple rightmost eigenvalue.

We point out that our proof of Gamma conjecture II requires a rather involved manipulation of eigenvalues. In particular, the finer estimates for del Pezzo surfaces goes beyond what is provided by the general results on the quantum spectrum of blow-ups of algebraic surfaces in \cite{GS25}.
Moreover, 
 the proof relies on remarkably little explicit information regarding the Gromov-Witten invariants of $X_r$, as   provided in \eqref{eq--invariants} and Lemma \ref{lemma of table}.

 \begin{rema}
     More generally, it is important to study the   quantum spectrum of the Euler vector field $E(\mathbf{t})$. A conjecture concerning the quantum spectrum for blow-ups was proposed by Kontsevich in   several talks.
 The general case was proved by Iritani \cite{Iri25}, and the surface case was also shown by Gyenge and Szab\'o  \cite{GS25}. The quantum spectrum plays an important role in the breakthrough work \cite{KKPY25} by Katzarkov, Kontsevich, Pantev and Yu, which resolves the long-standing   irrationality problem for very general cubic fourfolds.  
 \end{rema}

\begin{rema} 
    In the context of noncommutative Hodge theory \cite{KKP08}, Katzarkov, Kontsevich, and Pantev proposed a $\mathbb{Q}$-structure on $QH_{\rm big}(X)$, which can be viewed as the rational counterpart of Iritani's $\widehat{\Gamma}$-integral structure.
    Gamma conjecture II can  be understood as a compatibility between the Betti structure
and the Stokes structure, as discussed by Hertling and Sevenheck \cite{HS07} in the context of TERP structures. 
  Gamma conjecture II was   formulated under the assumption that $QH_{\rm big}(X)$ is generically semisimple. For the setting in which $QH_{\rm big}(X)$ is non-semisimple, Sanda and Shamoto \cite{SS20}   proposed a corresponding generalization, referred to as a Dubrovin-type conjecture.
\end{rema}

The paper is organized as follows. In Section 2, we review the precise statement of Gamma conjecture II. In Section 3, we prove the Global Gamma-I theorem and the Strategy-Theorem. In Section 4, we show that Gamma conjecture II holds for del Pezzo surfaces. Finally, in Section 5, we prove a technical proposition   needed in the proof of Theorem \ref{thm: del Pezzo}.

\subsection*{Acknowledgements} 
The authors would like to thank Hiroshi Iritani and Sergey Galkin for the relevant collaboration on Gamma conjecture I that inspired the present  work.  
The first three authors are  supported in part by the National Key R \&  D Program of China No.~2023YFA1009801.
  J. Hu is also supported in part by NSFC Grant 12531003.
 H.-Z.  Ke is also supported in part by NSFC Grant 12271532.

\section{Brief review of Gamma conjecture II}\label{sec--preliminary}

In this section, we review Gamma conjecture II proposed by Galkin, Golyshev and Iritani \cite{GGI16}. It was originally stated for Fano manifolds with   semisimple   big quantum cohomology, and 
can be straightforwardly generalized to   projective manifolds with the same property.  By \cite{HMT09}, any such projective manifold has vanishing odd cohomology. 
We therefore begin with a projective manifold $X$ satisfying  $H^*(X)=H^{\mathrm{even}}(X,\bC)$.

\subsection{Quantum cohomology and Dubrovin connection}\label{sec-Dubrovinconn} 

Let $\mathrm{Eff}(X)\subset H_2(X,\mathbbm{Z})$ be the semigroup of effective curve classes. Denote by $\overline{M}_{0,k}(X,\mathbf{d})$ the moduli space of $k$-pointed, genus-zero stable maps of degree $\mathbf{d}\in\mathrm{Eff}(X)$. The genus-zero Gromov-Witten invariants of $X$ are defined as
\[
\<\prodl_{i=1}^k\tau_{a_i}(\gamma_i)\>_{\mathbf{d}}^X:=\int_{[\overline{M}_{0,k}(X,\mathbf{d})]^{\mathrm{vir}}}\prodl_{i=1}^kc_1(L_i)^{a_i}\mathrm{ev}_i^*\gamma_i,
\]
where $\gamma_i\in H^*(X)$, $a_i\in\bZ_{\ge0}$, $L_i$ is the universal cotangent line bundle at the $i$-th marked point, $\overline{M}_{0,k}(X,\mathbf{d})\xrightarrow{\mathrm{ev}_i}X$ is the evaluation map at the $i$-th marked point, and $[\overline{M}_{0,k}(X,\mathbf{d})]^{\mathrm{vir}}$ is the \emph{virtual fundamental class}. %with $D=\dim_\bC X-3+c_1(\mathbf{d})+k$. 
They satisfy a number of axioms \cite[Chapter 7.3]{CK99}, such as
Degree Axiom (i.e., the dimension constraint) and Divisor Axiom.
Roughly speaking, $\<\gamma_1,\dots,\gamma_k\>_{\bfd}^X:=\<\prodl_{i=1}^k\tau_{0}(\gamma_i)\>_{\mathbf{d}}^X$ is the (virtual) number of degree-$\mathbf{d}$ rational curves in $X$ intersecting the Poincar\'e dual cycles of $\gamma_1,...,\gamma_k$. We refer the reader to    \cite{CK99} for general introductions of Gromov-Witten invariants.   

Let $\{T_i\}_{i=0}^{s-1}$ be a homogeneous basis of $H^*(X)$, such that $T_0=\mathbf{1}$ and $\{T_i\}_{i=1}^{s'}$ is a nef basis in $H^2(X,\bZ)$. Write a general element of $H^*(X)$ as $\mathbf{t}=\suml_{i=0}^{s-1}t_iT_i$. We    decompose  $\mathbf{t}=\mathbf{t}^{(2)}+\mathbf{t}'$, where $\mathbf{t}^{(2)}\in H^2(X)$ and $\mathbf{t}'\in\bigoplus_{p\ne1}H^{2p}(X)$. The potential function of primary invariants is defined by $\mathcal{F}_X(\mathbf{t}):=\suml_{\mathbf{d}\in\Eff(X)}\<\exp\left(\tau_0(\mathbf{t}')\right)\>_{\mathbf{d}}^X\cdot\rme^{\mathbf{t}^{(2)}(\mathbf{d})}$. By the nef assumption of $\{T_i\}_{i=1}^{s'}$, we see that $\mathcal{F}_X(\mathbf{t})\in\bC[[t_0,q_1,...,q_{s'},t_{s'+1},\dots,t_{s-1}]]$, with $q_i:=\rme^{t_i}$.
 The quantum product $\star_\mathbf{t}$ on $H^*(X)$ is defined by
\begin{align}\label{def-quantumproduct}
(T_i\star_\mathbf{t}T_j,T_k)=\partial_{t_i}\partial_{t_j}\partial_{t_k}\mathcal{F}_X(\mathbf{t}), 
\end{align}
where $(\cdot,\cdot)$ is the Poincar\'e pairing on $H^*(X)$. The family  $(H^*(X),\star_\bft)$  is called the big quantum cohomology of $X$. 

We impose  the following convergence assumption: 
 \begin{align}\label{assumption:conv}
 \mbox{the potential function }  \mathcal{F}_X(\mathbf{t}) \mbox{ \textbf{converges near the large radius limit}. }
 \end{align} That is, there exists $\delta>0$ such that if 
 $|t_0|, |\rme^{t_1}|,...,|\rme^{t_{s'}}|, |t_{s'+1}|,...,|t_{s-1}|<\delta$, then $\mathcal{F}_X(\mathbf{t})$  converges absolutely as a series in $t_0,\rme^{t_1},\dots,\rme^{t_{s'}},t_{s'+1},\cdots,t_{s-1}$.

\begin{remark}
    If $X$ is Fano and  $H^*(X)$ is generated by $H^2(X)$ (e.g. a del Pezzo surface),  then $\mathcal{F}_X(\mathbf{t})$   converges near the large radius limit by  \cite[Corollary 5.9]{Iri07}. 
\end{remark}

Let $B\subset H^*(X)$ be the domain of convergence of the potential function $\mathcal{F}_X(\mathbf{t})$. 
For any $\mathbf{t}\in B$, $(H^*(X),\star_{\mathbf{t}})$ is a commutative and associative   $\bC$-algebra  with identity $\mathbf{1}$; 
we say that $\bft$ is a \textbf{semisimple} point if $(H^*(X),\star_{\mathbf{t}})$ is semisimple, namely  if $(H^*(X),\star_{\mathbf{t}})$ is isomorphic to $\bC\oplus\cdots\oplus\bC$ as a $\bC$-algebra.  We further call $\mathbf{t}$    a \textbf{tame semisimple} point if $(E(\bft)\star_\bft)$ has only simple eigenvalues, where 
$$E(\mathbf{t}):=c_1(X)+\suml_{i=0}^{s-1}\left(1-{1\over2}\deg T_i\right)t_iT_i$$
is called the  \textbf{Euler vector field}. 
Let $B_{ss}\subset B$ be the subset of all semisimple points. We say that the big quantum cohomology of $X$ is semisimple if $B_{ss}\neq\varnothing$. In this case,    $B\setminus B_{ss}$ is a divisor in $B$.

The (big) \textbf{Dubrovin connection}  $\nabla$ \cite{Dub96,Dub98,Dub99}  is a meromorphic flat connection on the trivial $H^*(X)$-bundle over $B\times\bP^1$.  With respect to a frame of constant sections, it reads
\begin{align*}
    \nabla=\mathrm{d}+\suml_{i=0}^{s-1}{1\over z}(T_i\star_\bft)\mathrm{d}t_i-\left({1\over z}\left(E(\mathbf{t})\star_{\mathbf{t}}\right)-\mu\right){\mathrm{d}z\over z},
\end{align*}
where $z$ is the inhomogeneous coordinate on $\bP^1$, 
and $\mu\in\End(H^*(X))$ is the grading operator defined by $\mu(T_i)={1\over2}\left(\deg T_i-\dim_\bC X\right)T_i$. Let $\widetilde{\bC^\times}$ be the universal cover of $\bC^\times=\bP^1\setminus\{0,\infty\}$, and denote by  $\widetilde{\nabla}$   the pullback of $\nabla$ via $B\times\widetilde{\bC^\times}\to B\times\bC^\times$. Denote
$$\mathcal{S}:= \{\widetilde{\nabla}\mbox{-flat sections  of the trivial } H^*(X)\mbox{-bundle over } B\times\widetilde{\bC^\times}\}.$$
 For $\mathbf{t}\in B$, following Givental \cite[Corollary 6.2]{Giv96}, we define a formal   endomorphism $L(\mathbf{t},z)\in\End(H^*(X))[[{1\over z}]]$ such that the Poincar\'e pairing $(L(\mathbf{t},z)T_i,T_j)$   equals
\[
\left(\rme^{-{\mathbf{t}^{(2)}\over z}}T_i,T_j\right)+\suml_{m\ge0}\left({-1\over z}\right)^{m+1}\suml_{\mathbf{d}\in\Eff(X)}\<\tau_m\left(\rme^{-{\mathbf{t}^{(2)}\over z}}T_i\right)\exp\left(\tau_0(\mathbf{t}')\right)\tau_0(T_j)\>_{\mathbf{d}}^X\cdot\rme^{\mathbf{t}^{(2)}(\mathbf{d})}. 
\]
The \textbf{cohomology framing} of $\mathcal{S}$ is the $\bC$-linear isomorphism  
\begin{align*}
  \mathcal{Z}: &\,\,\, H^*(X)\xrightarrow{\cong}\mathcal{S};\,\, \alpha \mapsto \left((\mathbf{t},z)\mapsto L(\mathbf{t},z)z^{-\mu}z^{c_1(X)\cup}\alpha\right),
\end{align*}
where $z^{-\mu}z^{c_1(X)\cup}:=\rme^{\mu\log z}\rme^{\left(c_1(X)\cup\right)\log z}$. Let $K(X)$ be the Grothendieck group of topological complex vector bundles on $X$. The \textbf{$K$-group framing} of $\mathcal{S}$ is the group homomorphism  
$$\mathcal{Z}^K: K(X) \xrightarrow{\quad}\mathcal{S};\,\,
V \mapsto (2\pi)^{-\frac{1}{2}}\mathcal{Z}\left(\widehat{\Gamma}_X\cup \widetilde{\ch}(V)\right).$$
Here $\widetilde{\ch}$ denotes the modified  Chern character defined as $\widetilde{\ch}(V):=\sum_{p\ge0}(2\pi\mathbf{i})^p\ch_p(V)$.
The Gamma class  $\widehat{\Gamma}_X$   of $X$ is defined by  
\begin{equation}
    \widehat{\Gamma}_X:=\prod_{i=1}^n\Gamma(1+\delta_i)\in H^*(X),
\end{equation}
where $\delta_i$ are the Chern roots of $X$ and $\Gamma(x)$ denotes   Euler's Gamma function. Iritani's \textbf{$\widehat{\Gamma}$-integral structure} \cite{Iri09} is the image $\mathcal{S}_\bZ\subset\mathcal{S}$ of $\mathcal{Z}^K$, which is a lattice in $\mathcal{S}$ of rank $\dim_\bC\mathcal{S}$. 

\begin{remark}
Inspired by ``remarkable identities" of Hosono, Klemm, Theisen, and Yau \cite{HKTY95} in the context of mirror symmetry,
Libgober \cite{Lib99} introduced the (inverse) Gamma class. In \cite{Iri09}, Iritani introduced the $\widehat{\Gamma}$-integral structure, and showed that, when $X$ is toric Fano, the structure matches with the natural integral structure in the Landau--Ginzburg $B$-model of $X$. A similar rational structure for quantum cohomology was also proposed by Katzarkov--Kontsevich--Pantec \cite{KKP08} in their study of non-commutative Hodge structures. We refer the reader to \cite[Section 1.5]{GGI16} for a detailed account of references.
\end{remark}

The free abelian group $\mathcal{S}_{\bZ}$ is endowed with a $\bZ$-valued bilinear pairing $[\cdot,\cdot)$ defined as follows. For any $s_1,s_2\in\mathcal{S}$, it is known that the pairing $\left(s_1(\mathbf{t},\rme^{-\pi\mathbf{i}}z),s_2(\mathbf{t},z)\right)$ is a constant function on $B\times\widetilde{\bC^\times}$. This induces a $\bC$-bilinear function $\mathcal{S}\times\mathcal{S}\xrightarrow{[\cdot,\cdot)}\bC$ defined by $[s_1,s_2):=\left(s_1(\mathbf{t},\rme^{-\pi\mathbf{i}}z),s_2(\mathbf{t},z)\right)$, which is nondegenerate, but is not necessarily symmetric or anti-symmetric. When restricted to $\mathcal{S}_\bZ$, the pairing takes values in $\bZ$. Moreover, for any $V_1,V_2\in K(X)$, we have 
\begin{equation}\label{equ-resp pairing}
[\mathcal{Z}^{K}(V_1),\mathcal{Z}^{K}(V_2))=\int_X\ch(V_1^\vee)\ch(V_2)\mathrm{td}(X)=:\chi(V_1,V_2)\in\bZ.    
\end{equation}
We refer the reader to \cite{Iri09} for more discussions on the pseudolattice $(\mathcal{S}_\bZ,[\cdot,\cdot))$.

\subsection{Exceptional collections} 
We refer the reader to \cite{BP94} for further details of this subsection. An object $F$ in the bounded derived category $\mathcal{D}_{\rm{coh}}^b(X)$ of coherent sheaves on $X$  is called \textbf{exceptional} if $\Hom(F, F) = \mathbb{C}$ and $\Hom(F, F[j]) = 0$,  where $[j]$  refers to applying the shift functor $j$ times, $j\neq 0$. An ordered tuple $(F_1, \dots, F_s)$ of such exceptional objects forms an \textbf{exceptional collection} if $\Hom(F_{i_1}, F_{i_2}[j]) = 0$ for all integers $j$ whenever $i_1 > i_2$. Two such collections are considered   equivalent up to shift if they differ only by a sequence of integer shifts applied to their respective components.  An exceptional collection is called \textbf{full} if it generates the category $\mathcal{D}_{\rm{coh}}^b(X)$.\par

Every exceptional object $F$ induces \textbf{left} and \textbf{right mutation functors}, $L_F$ and $R_F$, which are defined for any object $F' \in \mathcal{D}_{\rm{coh}}^b(X)$ via the distinguished triangles 
\begin{align*}
L&_F F'[-1] \to \Hom^\bullet(F, F') \otimes F \to F' \to L_F F',\, \text{ and}\\ &R_F F' \to F' \to \Hom^\bullet(F', F)^* \otimes F \to R_F F'[1].    
\end{align*}
 The set of  exceptional collections of length $N$ in $\mathcal{D}_{\rm{coh}}^b(X)$ admits a natural action by the braid group $B_N$. The generator $\sigma_i$ acts by replacing the adjacent pair $(F_i, F_{i+1})$ with $(F_{i+1}, R_{F_{i+1}}F_i)$, while $\sigma_i^{-1}$ replaces it with $(L_{F_i}F_{i+1}, F_i)$. The full triangulated subcategory generated by the collection is invariant under these mutations.

 Throughout the present paper, we abuse notation by identifying an object of $\mathcal{D}^b_{\rm coh}(X)$ with its corresponding class in the Grothendieck group $K(X)$ of topological complex vector bundles on $X$. For derived objects $E,F$, the Euler pairing $\chi$ in \eqref{equ-resp pairing} satisfies $\chi(E, F) = \sum_i (-1)^i \dim \operatorname{Ext}^i(E, F)$ by Hirzebruch-Riemann-Roch. Whenever $\mathcal{D}_{\rm{coh}}^b(X)$ possesses a full exceptional collection $(F_1, \dots, F_s)$, the abelian group $K(X)$ is torsion-free and the classes  $F_1, \dots, F_s$  in $K(X)$ form a $\bZ$-basis for $K(X)$; moreover, we have $\chi(F_i, F_i) = 1$ and $\chi(F_j, F_i) = 0$ for $j > i$. Thus, any full exceptional collection in $\mathcal{D}_{\rm{coh}}^b(X)$ naturally determines an exceptional basis for the pseudolattice $(K(X),\chi)$.

\subsection{Statement of Gamma conjecture II}\label{sec-Statement of Gamma conjecture II}

Assume $B_{ss}\neq\varnothing$.
 For a sufficiently small domain $U\subset B_{ss}$, Dubrovin \cite[Theorem 3.1, Lemma 3.2]{Dub99} proved that  there exist holomorphic coordinates $\{u_i\}$ on $U$ (called \textbf{canonical coordinates}), such that for each $\bft\in U$, the vectors $\{\partial_{u_i}\bft\}$ form an \textbf{idempotent basis} of $(H^*(X),\star_{\mathbf{t}})$ (i.e. $\partial_{u_i}\bft\star_{\mathbf{t}}\partial_{u_j}\bft=\delta_{ij}\partial_{u_i}\bft$), and $E(\bft)\star_\bft\partial_{u_i}\bft=u_i(\bft)\cdot\partial_{u_i}\bft$.  Assume moreover that  the \textbf{normalized idempotents} $\Psi_i(\bft):=\left(\partial_{u_i}\bft,\partial_{u_i}\bft\right)^{-{1\over2}}\partial_{u_i}\bft$  are   well-defined holomorphic maps on $U$ (which holds, for instance, if $U$ is simply connected). Then we say that 
 \begin{align}
     U \mbox{ is \textbf{properly-chosen} with respect to } \{(u_i,\Psi_i)\}.
 \end{align}
The functions $u_{i}$ are unique up to ordering, and   $\Psi_i$ are defined up to sign.

\begin{definition}\label{def-respect}
    Let $U\subset B_{ss}$ be properly-chosen with respect to $\{(u_i,\Psi_i)\}$ and $\phi\in\bR$. For $y\in\mathcal{S}$, we say that $y$ \textbf{respects $(u_i,\Psi_i)$ over $U$ with phase $\phi$}, if there exists $\varepsilon>0$ such that $\rme^{{u_i(\bft)}\over z}y(\bft,z)\to\Psi_i(\bft)$, as $z\to0$ with $|\arg z-\phi|<{\pi\over 2}+\varepsilon$. We say that $y$ respects $(u_i,\Psi_i)$ \textbf{at} $\bft_0\in U$ with phase $\phi$, if there exists $\varepsilon>0$ such that $\rme^{{u_i(\bft_0)\over z}}y(\bft_0,z)\to\Psi_i(\bft_0)$, as $z\to0$ with $|\arg z-\phi|<{\pi\over 2}+\varepsilon$.
\end{definition}

For $\bft\in B$, we say that a phase $\phi\in\bR$ is \textbf{admissible} at $\bft$ if $\rme^{\mathbf{i}\phi}$ is not parallel to any nonzero difference of eigenvalues of $E(\bft)\star_\bft$. This is an open condition on $\bR$.

\begin{prop}\cite[Proposition 2.5.1]{GGI16}
Suppose that $U\subset B_{ss}$ is properly-chosen with respect to $\{(u_i,\Psi_i)\}$, $\bft_0\in U$ and $\phi\in\bR$ is an admissible phase at $\bft_0$. Then there exists a unique basis $\{y_i\}$ of $\mathcal{S}$ satisfying the following property: there exists an open neighborhood $U'\subset U$ of $\bft_0$ such that $y_i$ respects $(u_i,\Psi_i)$ over $U'$ with phase $\phi$ for each $i$.
\end{prop}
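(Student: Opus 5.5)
This statement is quoted from \cite[Proposition 2.5.1]{GGI16}; the plan is to reproduce the standard asymptotic analysis of the irregular singularity at $z=0$. The first step is to pass to the semisimple frame. On $U$ write $\Psi=(\Psi_1,\dots,\Psi_s)$ and $u=\mathrm{diag}(u_1,\dots,u_s)$. In the normalized idempotent frame, the $z$-part of $\nabla$ becomes
\[
\partial_z+\frac{1}{z^2}u-\frac{1}{z}V(\bft),
\]
where $V=\Psi^{-1}\mu\Psi$ is antisymmetric, because the $\Psi_i$ are orthonormal for the Poincar\'e pairing. The $\bft$-part becomes
\[
\mathrm{d}+\frac{1}{z}\mathrm{d}u+\Psi^{-1}\mathrm{d}\Psi .
\]
Since $u$ has distinct eigenvalues on $B_{ss}$, this is an unramified irregular singularity of Poincar\'e rank one with diagonal leading term. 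I will then construct the formal fundamental solution
\[
\Psi\,R(\bft,z)\,\rme^{-u/z},\qquad R=\mathbf{1}+R_1z+R_2z^2+\cdots,
\]
where the off-diagonal parts of the $R_k$ are solved recursively from $[u,R_{k+1}]=\cdots$. The diagonal parts are fixed by flatness in $\bft$ together with the normalization; the vanishing of $\mathrm{diag}(V)$ is what prevents any $z^{\lambda}$ factor from appearing. Each $R_k$ is holomorphic in $\bft\in U$.

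The second step is Borel–Laplace, or equivalently the Sibuya sectorial existence theorem. Take a sector $S_\phi=\{|\arg z-\phi|<\pi/2+\varepsilon\}$. Admissibility of $\phi$ at $\bft_0$ means no Stokes ray $\arg(u_i-u_j)$ points along $\pm\rme^{\mathbf{i}\phi}$. Because the $u_i$ are continuous, for $\varepsilon$ small and a neighborhood $U'$ of $\bft_0$ small, $S_\phi$ then contains exactly one Stokes direction from each pair $\pm(u_i-u_j)$, uniformly over $U'$. On such a sector, opening slightly more than $\pi$, the Hukuhara–Turrittin–Sibuya theorem (with holomorphic parameters, as in Dubrovin's treatment \cite{Dub99}) gives a unique actual solution $Y(\bft,z)$ with $Y\sim\Psi R\rme^{-u/z}$ as $z\to0$ in $S_\phi$, holomorphic in $\bft\in U'$. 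Flatness in the $\bft$-directions follows from the uniqueness: applying the $\bft$-part of the connection to $Y$ gives a solution of the $z$-equation with zero asymptotic expansion. I will take $y_i$ to be the $i$-th column of $Y$. Then $\rme^{u_i/z}y_i\to\Psi_i$ over $U'$, and these columns form a basis because $\det Y\neq0$.

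The third step is uniqueness. Suppose $y$ respects $(u_i,\Psi_i)$ over $U'$. Write $y-y_i=\sum_j c_jy_j$. Then $\rme^{u_i/z}\sum_j c_jy_j$ must tend to $0$ on all of $S_\phi$. Take the $j$ with $c_j\ne0$ that is dominant in some subdirection. For every $j\ne i$, the exponential $\rme^{(u_i-u_j)/z}$ blows up somewhere in a sector of opening greater than $\pi$, and the $j=i$ term tends to $c_i\Psi_i$. Comparing the dominant terms along suitable rays therefore forces all $c_j=0$.

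I expect the main obstacle to be the sectorial existence theorem on a sector of opening greater than $\pi$, with uniformity in $\bft$. If Borel summability is used directly, one must check that the Borel transform of each column extends analytically along the ray in direction $\phi$, away from the singular points $u_i-u_j$, with exponential bounds that hold uniformly in $\bft$. This uniformity is exactly where admissibility and the shrinking to $U'$ are needed.
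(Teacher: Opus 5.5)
The paper gives no proof of this proposition; it quotes it from \cite{GGI16}. The construction it actually relies on (Section~\ref{sec-AEFS and strategy}, following \cite[Section 2]{GGI16}) takes a different route from yours. There each $y_i$ is built on its own. Lemma~\ref{lemma--localholomorphicflatsectionofsmalldualconn} gives the unique holomorphic $\widehat{\nabla}$-flat section $\hat y_i$ near $\lambda=u_i$ with $\hat y_i(\bft,u_i(\bft))=\Psi_i(\bft)$; it exists because the residue at $\lambda=u_i$ is nilpotent and kills $\Psi_i$. One then sets $y_i:=z^{-1}\int_{u_i+\bR_{\ge0}\rme^{\mathbf{i}\phi}}\hat y_i\,\rme^{-\lambda/z}\,\mathrm{d}\lambda$. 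In that route:
admissibility is literally the condition that this ray misses the other $u_j$;
tilting the ray gives the extra $\varepsilon$;
the logarithmic pole at $\lambda=\infty$ gives convergence;
Watson's lemma (or integration by parts) gives $\rme^{u_i/z}y_i\to\Psi_i$;
integration by parts gives $\bft$-flatness.
Uniqueness is handled pointwise at $\bft_0$, as in \cite[Lemma 2.7]{HK23}, by the kind of exponential-dominance comparison you give in your third step.

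Your route is the classical one and is sound: formal normal form $\Psi R\rme^{-u/z}$, then the Sibuya/Balser--Jurkat--Lutz sectorial theorem with parameters, with $\bft$-flatness deduced from uniqueness. It yields the whole fundamental matrix at once within standard ODE theory. However, it needs all $u_i$ distinct to diagonalize the leading term, and it hands the analytic core to a heavy cited theorem. The Laplace route needs only that the single eigenvalue $u_i$ be simple. That is exactly what lets the paper reuse it at non-semisimple (SR) points and outside $B$ (Lemma~\ref{lemma: existy}, Remark~\ref{rmk39}). It also turns mutations into deformations of integration paths (Figure~\ref{fig:1}), which the Strategy-Theorem and the del Pezzo computation depend on.

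Three details in your write-up need fixing, none fatal.

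\emph{The sign of the $z$-part.} With the paper's conventions the $z$-part in the $\Psi$-frame is $\partial_z-z^{-2}u+z^{-1}V$. With your sign, solutions behave like $\rme^{+u/z}$, and your $z$- and $\bft$-parts are not jointly flat.

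\emph{The diagonal of $R$.} The $z$-equation alone already determines $R$. The diagonal of the $z^k$-coefficient of $[u,R]=z^2\partial_zR+zVR$ reads $(k-1)(R_{k-1})_{ii}=-(VR_{k-1})_{ii}$. For $k\ge2$ this determines $(R_{k-1})_{ii}$ from off-diagonal entries (using $V_{ii}=0$); for $k=1$ it is exactly the condition $V_{ii}=0$. So $\bft$-flatness of the formal solution is something to verify, not a normalization you can impose. It holds because $\Psi^{-1}\mathrm{d}\Psi$ is antisymmetric, and your deduction of $\bft$-flatness of $Y$ needs this fact.

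\emph{The sector condition.} Admissibility plus small $\varepsilon$ does \emph{not} make $S_\phi$ contain exactly one of $\pm\arg(u_i-u_j)$. If $u_i-u_j\perp\rme^{\mathbf{i}\phi}$, which admissibility allows, $S_\phi$ contains both for every $\varepsilon>0$. What admissibility guarantees is that no $\arg(u_i-u_j)$ lies within $\varepsilon$ of $\phi$ modulo $\pi$. Equivalently, $S_\phi$ contains exactly one of each antipodal pair of oscillation directions $\arg(u_i-u_j)\pm\pi/2$. That is the hypothesis the sectorial existence theorem (equivalently, Borel summability in the non-singular direction $\phi$) actually requires.
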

 \noindent The   basis $\{y_i\}$ of $\mathcal{S}$ in the above proposition is called the \textbf{asymptotically exponential fundamental solution} (AEFS for short) to the equation $\widetilde{\nabla} s=0$ near $\bft_0$ associated to the phase $\phi$ with respect to $\{\Psi_i\}$. It follows from \cite[Lemma 2.7]{HK23} that the above AEFS $\{y_i\}$ can be uniquely characterized as follows: for each $i$, $y_i$ respects $(u_i,\Psi_i)$ \textbf{at} $\bft_0$ with phase $\phi$.

Roughly speaking, Gamma conjecture II expects that an AEFS lies in $\mathcal{S}_\bZ$. The precise statement is as follows.

\begin{GammaII}\cite[Conjecture 4.6.1]{GGI16}
    Assume that: (i) the big quantum cohomology of $X$ is semisimple; (ii)  $\mathcal{D}^b_{\rm coh}(X)$ admits a full exceptional collection. Suppose that $U\subset B_{ss}$ is properly-chosen with respect to $\{(u_i,\Psi_i)\}$, $\bft_0\in U$ and $\phi\in\bR$ is an admissible phase at $\bft_0$, where the $u_i$ are ordered so that $\Im\left(\rme^{-\mathbf{i}\phi}u_1(\bft_0)\right)\ge...\ge\Im\left(\rme^{-\mathbf{i}\phi}u_{s}(\bft_0)\right)$.
    Then there exists a full exceptional collection $(E_1, \dots ,E_{s})$ of $\mathcal{D}^b_{\rm coh}(X)$ such that for each $i$, $\mathcal{Z}^K(E_i)$ respects $(u_i,\Psi_i)$ at $\bft_0$ with phase $\phi$.
\end{GammaII}
\noindent By \cite[Remark 4.6.3]{GGI16} and \cite[Remark 4.13]{GI19}, the validity of Gamma conjecture II does not depend on the choices of semisimple points and admissible phases.

\section{Gamma conjecture II via Gamma-I over K\"{a}hler moduli}\label{sec-GammaI}

 Gamma conjecture I  was proposed for Fano manifolds in terms of the small quantum cohomology at the specialization of $\mathbf{q}=\mathbf{1}$ \cite{GGI16}. It appears to be largely independent of Gamma conjecture II. Nevertheless, 
in this section  we establish  the Strategy-Theorem (i.e. Theorem \ref{thm:strategy}) that reveals a deep connection between  Gamma conjecture II and property \textit{Gamma-I} over the K\"{a}hler moduli space. We now   further assume that $X$ is a Fano manifold (or more generally,  that $X$ can be deformed to a Fano manifold, by deformation invariance of Gromov-Witten theory).

\subsection{Small quantum cohomology}\label{sec-smallquantumcohomology}
We use notations in Section \ref{sec-Dubrovinconn}. Recall $q_k=e^{t_k}$ for $k=1,\dots,s'$. For any $\bft^{(2)}\in H^2(X)$, it follows from the dimension axiom of Gromov-Witten theory and the Fano property of $X$ that  
    \[
    T_i\star_{\bft^{(2)}} T_j \in\bC[q_1,\dots, q_{s'}]\otimes H^*(X).
    \]
   Therefore, we obtain a $\bC[\mathbf{q}]$-family of $\bC$-algebras $(H^*(X),\star_{\bft^{(2)}})$, called the small quantum cohomology of $X$. 
   
   Analogously to the big quantum cohomology case, the (small) Dubrovin connection 
    $\nabla^{\rm{sm}}$  is  a meromorphic flat connection on the trivial $H^*(X)$-bundle over $H^2(X)\times\bP^1$, and with respect to a frame of constant sections, it reads $$\nabla^{\rm{sm}}=\mathrm{d}+\sum_{i=1}^{s'}{1\over z}(T_i\star_{\bft^{(2)}})\mathrm{d}t_i-\left({1\over z}\hat{c}_1(\bft^{(2)})-\mu\right){\mathrm{d}z\over z},\mbox{ where } \hat{c}_1(\bft^{(2)}):= c_1(X)\star_{\bft^{(2)}}.$$
    Moreover, we obtain the space $\mathcal{S}^{{\rm{sm}}}$ of $\widetilde{\nabla}^{\rm{sm}}$-flat sections, together with cohomology framing $\mathcal{Z}^{{\rm{sm}}}$, $K$-group framing $\mathcal{Z}^{K,{\rm{sm}}}$ and $\widehat{\Gamma}$-integral structure $\mathcal{S}_\bZ^{\rm{sm}}$, defined as in the big case with   the operator $L(\mathbf{t}, z)$ replaced by $L(\mathbf{t}^{(2)}, z)$.

In particular, for any $\bft^{(2)}\in B\cap H^2(X)$, we have $\mathcal{Z}^{K}(V)(\bft^{(2)},z)=\mathcal{Z}^{K,{\rm{sm}}}(V)(\bft^{(2)},z)$ for any $V\in K(X)$.

We will use Iritani's \textbf{Galois action} of $H^2(X,\bZ)$ on $\mathcal{S}_\bZ^{\rm{sm}}$, described as follows. For any $\xi\in H^2(X,\bZ)$, consider the isomorphism of trivial $H^*(X)$-bundles over $H^2(X)\times\bP^1$ defined by
\[
H^*(X)\times(H^2(X)\times\bP^1)\to H^*(X)\times(H^2(X)\times\bP^1);(\alpha,\bft^{(2)},z)\mapsto(\alpha,\bft^{(2)}-2\pi\mathbf{i}\xi,z).
\]
Then it is an automorphism of $\nabla^{\rm{sm}}$, and it induces an action on $\mathcal{S}_\bZ^{\rm{sm}}$
\begin{align}\label{eqn-Galoisaction}
G(\xi):\mathcal{S}_\bZ^{\rm{sm}}\to\mathcal{S}_\bZ^{\rm{sm}}; s\mapsto\left((\bft^{(2)},z)\mapsto s(\bft^{(2)}-2\pi\mathbf{i}\xi,z)\right).
\end{align}
To describe the action $G(\xi)$ in terms of $K$-group framing, let $L_\xi$ be the topological complex line bundle with $c_1(L_\xi)=\xi$, and we have
\begin{align}\label{equ-monodromy-invariance}
    \mathcal{Z}^{K,{\rm{sm}}}(V\otimes L_\xi)=G(\xi)\left(\mathcal{Z}^{K,\rm{sm}}(V)\right),\quad\forall V\in K(X).
\end{align}
We refer the reader to \cite{Iri09} for more details. For any path $\phi:[0,1]\to H^2(X)$ with $\phi(0)=\bft^{(2)}$ and $\phi(1)=\bft^{(2)}-2\pi\mathbf{i}\xi$, we will say that \textbf{$V\otimes L_\xi$ is obtained from $V$ via Galois action given by the path $\phi$}.

\subsection{Condition $(\mathrm{SR})$ and Gamma-I}\label{sec-condtion*gammaI}
In this subsection, we recall condition $(\mathrm{SR})$, introduce property Gamma-I, and prove the global Gamma-I Theorem \ref{thm--gammaIoverdomain}. We emphasize that the semisimplicity of the big/small quantum cohomology is \emph{NOT} assumed in this subsection.

By condition $(\mathrm{SR})$ at $\bft^{(2)}$, we mean 
\begin{align}
    \tag*{ \rm ({SR})} \hat{c}_1(\bft^{(2)}) \mbox{\,\, has a simple rightmost eigenvalue}.
\end{align}
That is,  the linear operator $\hat{c}_1(\bft^{(2)})\in\End(H^*(X))$ has a (unique) simple eigenvalue $u$, such that    $\Re u'<\Re u$ for every other eigenvalue $u'$ of $\hat{c}_1(\bft^{(2)})$. 
By the $(\mathrm{SR})$-region of $X$, we mean the subset  $\{\bft^{(2)}\in H^2(X) \mid X \mbox{ satisfies } (\mathrm{SR})\mbox{ at }\bft^{(2)} \}$ of $H^2(X)$. This is an open subset of $H^2(X)$, which need not   be  connected when nonempty.    

Suppose that $X$ satisfies condition $(\mathrm{SR})$  at ${\bft^{(2)}}\in H^2(X)$,  and let $u\in\bC$ be the simple rightmost eigenvalue of $\hat{c}_1(\bft^{(2)})$. Denote by  $\mathcal{A}(\bft^{(2)})$ the $\bC$-linear space consisting of $\widetilde{\nabla}^{\rm{sm}}$-flat sections $y(\bft^{(2)},z)$ such that $\rme^{{u\over z}}y(\bft^{(2)},z)$ has moderate growth as $z\to0$ along the positive real axis (i.e. $|\arg z|=0$). Then $\dim_\bC\mathcal{A}(\bft^{(2)})=1$ by \cite[Remark 3.1.9]{GGI16}.

\begin{definition}[Property Gamma-I]
We say that \textbf{$X$ satisfies Gamma-I at $\bft^{(2)}\in H^2(X)$}, if   $X$ satisfies condition $(\mathrm{SR})$ at $\bft^{(2)}$ and  $\mathcal{Z}^{K,{\rm{sm}}}(\cO)\in\mathcal{A}(\bft^{(2)})$.
\end{definition}

\noindent We point out that the property Gamma-I does \emph{NOT} assume convergence or semisimplicity of big quantum cohomology.

\begin{remark}
The original Gamma conjecture I \cite[Conjecture 3.4.3]{GGI16} is slightly stronger than the statement that, if a Fano manifold satisfies condition $(\mathrm{SR})$ at $\mathbf{0}\in H^2(X)$, then it satisfies Gamma-I at $\mathbf{0}$. Counter-examples were found in \cite{GHIKLS24}, for which  the aforementioned statement fails but   Gamma-I at some nonzero $\bft^{(2)}$ is satisfied.  
\end{remark}

Assume that $X$ satisfies $(\mathrm{SR})$ at $\bft_0^{(2)}$  with $u_0\in\bC$ the simple rightmost eigenvalue. The equation $\det\left(u-(\hat{c}_1(\bft^{(2)}))\right)=0$ determines, by the Implicit Function Theorem, a unique holomorphic function $u(\bft^{(2)})$ near $\bft^{(2)}_0$ with $u(\bft^{(2)}_0)=u_0$.  As a consequence, for any domain $U$ inside the $ (\mathrm{SR})$-region of $X$, there are unique holomorphic maps $U\xrightarrow{u}\bC$ and $U\xrightarrow{\psi}H^*(X)$, such that for each $\bft^{(2)}\in U$, $u(\bft^{(2)})$ is the simple rightmost eigenvalue of $\hat{c}_1(\bft^{(2)})$, and $\psi(\bft^{(2)})$ is the idempotent generator of the associated one-dimensional eigenspace. Whenever the   normalized idempotent   $\Psi:=(\psi,\psi)^{-{1\over 2}}\psi$ is a well-defined holomorphic map (which holds, for instance, if  $U$ is simply connected),  we say that
\begin{align}
    (u,\Psi) \mbox{ is \textbf{a simple rightmost pair} over }U.
\end{align}

Before we investigate Gamma-I, we need the following result from linear algebra. Note that with respect to the Poincar\'e pairing, $\hat{c}_1(\bft^{(2)})$ is symmetric, and the grading operator $\mu$ is anti-symmetric by degree-counting. 
\begin{lemma}\label{lemma-linearalgebra}
    Suppose that $V$ is a finite-dimensional $\bC$-linear space, and $(\cdot,\cdot)$ is a bilinear pairing on $V$ which is symmetric and non-degenerate. Let $S,A\in\End_\bC(V)$ be such that $S$ is symmetric and $A$ is anti-symmetric with respect to $(\cdot,\cdot)$. Suppose that $\lambda$ is a simple eigenvalue of $S$ with the corresponding eigenspace $V_\lambda$, and let $V'$ be the direct sum of generalized eigenspaces of $S$ with eigenvalues $\ne\lambda$. Then: (i) $V'=\Im(S-\lambda\cdot id)$; (ii) $V_\lambda$ and $V'$ are mutually orthogonal complements; (iii) $A(V_\lambda)\subset V'$.
\end{lemma}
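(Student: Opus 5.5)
The three parts will be proved in order, each using the previous one; everything is elementary linear algebra, using the primary decomposition $V=V_\lambda\oplus V'$ of $S$. Since $\lambda$ is a simple eigenvalue, its generalized eigenspace is one-dimensional and hence equals $V_\lambda$.

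\emph{Step (i).} The operator $S-\lambda\cdot id$ preserves each generalized eigenspace. On $V'$ it is invertible, because $\lambda$ is not an eigenvalue of $S|_{V'}$. On $V_\lambda$ it is zero. Hence $\Im(S-\lambda\cdot id)=(S-\lambda\cdot id)(V')=V'$.

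\emph{Step (ii).} Take $v\in V_\lambda$ and $w'\in V'$. By (i) we can write $w'=(S-\lambda)w$ for some $w\in V$. Symmetry of $S$ gives
\[
(v,w')=(v,(S-\lambda)w)=((S-\lambda)v,w)=0 .
\]
So $V_\lambda\perp V'$. Since $\dim V_\lambda+\dim V'=\dim V$ and $(\cdot,\cdot)$ is non-degenerate, $V'$ has dimension $\dim V-1$ and is contained in $V_\lambda^\perp$, which has the same dimension. Therefore $V'=V_\lambda^\perp$. The same argument gives $V_\lambda=(V')^\perp$, so the two spaces are mutual orthogonal complements.

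A consequence we will need: $V_\lambda$ is non-degenerate for the pairing. Indeed, if some $v\in V_\lambda$ with $v\neq 0$ satisfied $(v,v)=0$, then $v$ would be orthogonal to $V_\lambda$ and also to $V'$, hence to all of $V$, contradicting non-degeneracy.

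\emph{Step (iii).} Let $v$ span $V_\lambda$. By (ii), $V'=V_\lambda^\perp=\{w:(v,w)=0\}$, so it suffices to show $(v,Av)=0$. Anti-symmetry of $A$ gives $(v,Av)=-(Av,v)$. Symmetry of the pairing gives $(Av,v)=(v,Av)$. Combining, $(v,Av)=-(v,Av)$, so $(v,Av)=0$ and $Av\in V'$.

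The only point requiring care is the dimension count in (ii) that upgrades orthogonality to "orthogonal complement"; this uses simplicity of $\lambda$ together with non-degeneracy of the pairing. Part (iii) genuinely requires the pairing to be symmetric rather than merely non-degenerate, since the cancellation $(v,Av)=-(v,Av)$ comes from combining symmetry of $(\cdot,\cdot)$ with anti-symmetry of $A$.
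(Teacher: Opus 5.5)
Your proof is correct and follows essentially the same route as the paper: (i) from simplicity of $\lambda$, (ii) orthogonality plus a dimension count, and (iii) $(v,Av)=0$ from anti-symmetry of $A$ and symmetry of the pairing, then (ii). The only variations are minor. You derive $V_\lambda\perp V'$ from (i) via $\ker(S-\lambda)\perp\Im(S-\lambda)$, whereas the paper cites orthogonality of distinct generalized eigenspaces. Also, your remark that $V_\lambda$ is non-degenerate is correct but never actually used.
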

\begin{proof}
    Statement (i) follows from the assumption $\dim_\bC V_\lambda=1$. The statement (ii) holds since generalized eigenspaces corresponding to distinct eigenvalues of $S$ are mutually orthogonal. For (iii), let $v\in V_\lambda\setminus\{0\}$;  noting that $A$ is anti-symmetric, we have
    \[
    (A(v),v)=-(v,A(v))\Rightarrow(A(v),v)=0\Rightarrow A(v)\perp v.
    \]
   Then $A(v)\perp V_\lambda$ since $\dim_\bC V_\lambda=1$, and hence $A(v)\in V'$ by (ii). This proves (iii).
\end{proof}

The following Lemma \ref{lemma-partialforuinsmallquantum} and \ref{lemma--localholomorphicflatsectionofsmalldualconn} concern a \textbf{simple pair} $(u,\Psi)$ over a domain $U\subset H^2(X)$, that is, holomorphic maps $U\xrightarrow{u}\bC$ and $U\xrightarrow{\Psi} H^*(X)$, such that for each $\bft^{(2)}\in U$, $u(\bft^{(2)})$ is a simple eigenvalue of $\hat{c}_1(\bft^{(2)})$, and $\Psi(\bft^{(2)})$ is a normalized idempotent in the associated one-dimensional eigenspace. A simple rightmost pair is a special case of a simple pair.

\begin{lemma}\label{lemma-partialforuinsmallquantum}
Suppose that  $(u,\Psi)$ is a simple pair over a domain $U\subset H^2(X)$. Then, for every $1\leq j\leq s'$, we have $T_j\star_{\bft^{(2)}}\Psi(\bft^{(2)})=\partial_{t_j}u(\bft^{(2)})\cdot\Psi(\bft^{(2)})$, $\forall\bft^{(2)}\in U$.
\end{lemma}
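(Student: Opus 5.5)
The plan is to use the commutativity and associativity of the small quantum product together with the divisor-type homogeneity relation $\hat c_1=c_1(X)\star$. Since $\Psi$ is a normalized idempotent in the one-dimensional eigenspace of $\hat c_1$ for $u$, it spans the $u$-eigenspace. Because $\star_{\bft^{(2)}}$ is commutative and associative, each $T_j\star_{\bft^{(2)}}$ commutes with $\hat c_1(\bft^{(2)})$ and therefore preserves this eigenspace. Hence
$$T_j\star\Psi=\lambda_j\Psi$$
for some scalar function $\lambda_j(\bft^{(2)})$. In fact $\lambda_j$ is holomorphic: writing $\psi=(\psi,\psi)^{1/2}\Psi$ for the idempotent, we get $\lambda_j=(T_j\star\psi,\psi)/(\psi,\psi)$, or equivalently $\lambda_j\psi=T_j\star\psi$. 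The task is therefore to show $\lambda_j=\partial_{t_j}u$.

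\textbf{Step 1: differentiate the eigen-equation.} Differentiate
$$(\hat c_1(\bft^{(2)})-u)\Psi=0$$
in $t_j$. This gives
$$(\partial_{t_j}\hat c_1-\partial_{t_j}u)\Psi+(\hat c_1-u)\partial_{t_j}\Psi=0.$$

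\textbf{Step 2: project onto the $u$-eigenline.} Pair this identity with $\Psi$. By Lemma \ref{lemma-linearalgebra}(i),(ii), $\Im(\hat c_1-u)$ is orthogonal to $V_u$, so the second term vanishes. Using $(\Psi,\Psi)=1$, we obtain
$$\partial_{t_j}u=(\partial_{t_j}\hat c_1\,\Psi,\Psi).$$

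\textbf{Step 3: compute $\partial_{t_j}\hat c_1$.} This is the key input, and the step I expect to be the main obstacle. The claim is
$$\partial_{t_j}(c_1\star_{\bft^{(2)}})=T_j\star_{\bft^{(2)}}+[\,\cdot\,,\,\cdot\,]$$
where the correction is a commutator-type term involving $\mu$ that vanishes after pairing with $\Psi$.

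I would obtain this from flatness of $\nabla^{\rm sm}$, i.e. the vanishing of the $dz\wedge dt_j$ component of the curvature. Writing $A_j=T_j\star$, flatness gives
$$\partial_{t_j}\big(\tfrac1z\hat c_1-\mu\big)\cdot\tfrac{1}{z}\text{-terms}\;\Longrightarrow\;\partial_{t_j}\hat c_1=A_j+[\mu,A_j].$$
Concretely, the $z^{-1}$-coefficient yields $\partial_{t_j}\hat c_1-A_j=[A_j,\mu]$ up to sign. The sign is immaterial for the argument. If needed, this can be checked directly via the divisor axiom and the degree axiom, using $\deg q^{\bfd}=2c_1(\bfd)$ and the derivation property of the Euler field.

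\textbf{Step 4: conclude.} Pairing the commutator term with $\Psi$ gives
$$([\mu,A_j]\Psi,\Psi)=(\mu\lambda_j\Psi,\Psi)-(A_j\mu\Psi,\Psi).$$
The first term vanishes since $\mu$ is antisymmetric, so $(\mu\Psi,\Psi)=0$. For the second, symmetry of $A_j$ gives $(A_j\mu\Psi,\Psi)=(\mu\Psi,\lambda_j\Psi)=0$. Alternatively, Lemma \ref{lemma-linearalgebra}(iii) says $\mu\Psi\in V'\perp\Psi$, which gives the same vanishing. Therefore
$$\partial_{t_j}u=(A_j\Psi,\Psi)=\lambda_j,$$
which is the assertion.
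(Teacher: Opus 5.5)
Your proposal is correct and follows essentially the paper's route: the paper likewise differentiates the eigen-equation and uses flatness of $\nabla^{\rm sm}$ to get $\partial_{t_j}\hat c_1=(T_j\star_{\bft^{(2)}})+[(T_j\star_{\bft^{(2)}}),\mu]$. The correct sign is $[T_j\star,\mu]$ rather than $[\mu,T_j\star]$, but as you say this is harmless; the paper then kills the commutator term using antisymmetry of $\mu$ and symmetry of $T_j\star$, just as you do. The only cosmetic difference is that the paper works with the decomposition $H^*(X)=\bC\Psi\oplus H'_{\bft^{(2)}}$, where $H'_{\bft^{(2)}}=\Im(u-\hat c_1)$, and shows the commutator term lies in $H'_{\bft^{(2)}}$, whereas you pair with $\Psi$; these are equivalent because $H'_{\bft^{(2)}}$ is the orthogonal complement of $\bC\Psi$.
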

\begin{proof}
From the flatness of $\nabla^{\rm{sm}}$ and $\hat{c}_1(\bft^{(2)})\Psi(\bft^{(2)})=u(\bft^{(2)})\cdot\Psi(\bft^{(2)})$, we get
\begin{align*}
&\partial_{t_j}u(\bft^{(2)})\cdot\Psi(\bft^{(2)})+\left(u(\bft^{(2)})-\hat{c}_1(\bft^{(2)})\right)\partial_{t_j}\Psi(\bft^{(2)})\\
={}&T_j\star_{\bft^{(2)}}\Psi(\bft^{(2)})+[(T_j\star_{\bft^{(2)}}),\mu]\Psi(\bft^{(2)}).
%T_j\star_{\bft^{(2)}}\mu\left(\Psi(\bft^{(2)})\right)-\mu\left(T_j\star_{\bft^{(2)}}\Psi(\bft^{(2)})\right).
\end{align*}
Let $H'_{\bft^{(2)}}\subset H^*(X)$ be the direct sum of generalized eigenspaces of $\hat{c}_1(\bft^{(2)})$ with eigenvalues $\neq u(\bft^{(2)})$. Then $H'_{\bft^{(2)}}=\mathrm{Im}\left(u(\bft^{(2)})-\hat{c}_1(\bft^{(2)})\right)$ by Lemma \ref{lemma-linearalgebra} (i). Observe that $T_j\star_{\bft^{(2)}}\Psi(\bft^{(2)})\in\bC\Psi(\bft^{(2)})$,  and so it suffices to show that $[(T_j\star_{\bft^{(2)}}),\mu]\Psi(\bft^{(2)})\in H'_{\bft^{(2)}}$. 

Note that Lemma \ref{lemma-linearalgebra} (iii) implies  $\mu(\bC\Psi(\bft^{(2)}))\in H'_{\bft^{(2)}}$. So $\mu\left(T_j\star_{\bft^{(2)}}\Psi(\bft^{(2)})\right)\in H'_{\bft^{(2)}}$. As a consequence, we have
\begin{align*}
    0&=\left(\mu\left(T_j\star_{\bft^{(2)}}\Psi(\bft^{(2)})\right),\Psi(\bft^{(2)})\right)=-\left(T_j\star_{\bft^{(2)}}\Psi(\bft^{(2)}),\mu\left(\Psi(\bft^{(2)})\right)\right)\\
    &=-\left(\Psi(\bft^{(2)}),T_j\star_{\bft^{(2)}}\mu\left(\Psi(\bft^{(2)})\right)\right).
\end{align*}
Therefore $T_j\star_{\bft^{(2)}}\mu\left(\Psi(\bft^{(2)})\right)\in H'_{\bft^{(2)}}$. This finishes the proof of the lemma.
\end{proof}

We will use the Laplace-dual connection $\widehat{\nabla}^{{\rm{sm}}}$ \cite[Section 2.5]{GGI16} of $\nabla^{\rm{sm}}$ to study $\nabla^{\rm{sm}}$-flat sections. Let $\bC_\lambda$ be a copy of $\bC$ with coordinate $\lambda$. The  connection $\widehat{\nabla}^{{\rm{sm}}}$ is a meromorphic flat connection on the trivial $H^*(X)$-bundle over $H^2(X)\times\bC_\lambda$. With respect to a frame of constant sections, it reads $$\widehat{\nabla}^{{\rm{sm}}}=\mathrm{d}-\suml_{i=1}^{s'}(T_i\star_{\bft^{(2)}})\left(\lambda-\hat{c}_1(\bft^{(2)})\right)^{-1}\mu\mathrm{d}t_i+\left(\lambda-\hat{c}_1(\bft^{(2)})\right)^{-1}\mu\mathrm{d}\lambda.$$ When $|\lambda| \gg 0$, we have $\left(\lambda-\hat{c}_1(\bft^{(2)})\right)^{-1}=\lambda^{-1}\suml_{k\ge0}\left(\lambda^{-1}\hat{c}_1(\bft^{(2)})\right)^k$. It follows that the connection $\widehat{\nabla}^{{\rm{sm}}}$ has logarithmic singularities along the smooth divisor $\{\lambda=\infty\}$.
    
\begin{lemma}\label{lemma--localholomorphicflatsectionofsmalldualconn}
   Suppose that  $(u,\Psi)$ is a simple pair over a domain $U\subset H^2(X)$, and let $P_0=(\bft^{(2)}_0,u(\bft^{(2)}_0))\in U\times\bC_\lambda$. Then near $P_0$ there is a unique holomorphic $\widehat{\nabla}^{{\rm{sm}}}$-flat section $\hat{y}(\bft^{(2)},\lambda)$ satisfying $\hat{y}(\bft^{(2)},u(\bft^{(2)}))=\Psi(\bft^{(2)})$.
\end{lemma}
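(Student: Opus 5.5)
The plan is to reduce the statement to the classical theory of linear ODEs with a regular singularity along a smooth divisor, namely $D=\{\lambda=u(\bft^{(2)})\}$ near $P_0$. For $\bft^{(2)}$ near $\bft^{(2)}_0$ and $\lambda$ near $u(\bft^{(2)})$, Lemma \ref{lemma-linearalgebra} decomposes $H^*(X)=\bC\Psi(\bft^{(2)})\oplus H'_{\bft^{(2)}}$, and
\[
\left(\lambda-\hat c_1(\bft^{(2)})\right)^{-1}=\frac{\pi(\bft^{(2)})}{\lambda-u(\bft^{(2)})}+R(\bft^{(2)},\lambda).
\]
Here $\pi=(\Psi,\cdot)\Psi$ is the orthogonal projection onto $\bC\Psi$, and $R$ is holomorphic near $P_0$ because the remaining eigenvalues stay away from $u$. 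Hence $\widehat{\nabla}^{\rm sm}$ has at most a logarithmic pole along $D$. The $\mathrm{d}\lambda$-residue is $\pi\mu$. By Lemma \ref{lemma-linearalgebra} (iii), $\mu\Psi\in H'$, so $\pi\mu\Psi=0$. Since $\mu$ is anti-symmetric, $\pi\mu v=(\Psi,\mu v)\Psi=-(\mu\Psi,v)\Psi$, and this vanishes for $v\in\bC\Psi$. So the residue $\pi\mu$ is nilpotent of rank at most one, its image is $\bC\Psi$, and it kills $\Psi$.

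I will change variables to $w=\lambda-u(\bft^{(2)})$ and keep $\bft^{(2)}$. In these coordinates the $\mathrm{d}t_i$-components also acquire poles along $w=0$, through $-(T_i\star)\frac{\pi}{w}\mu$ together with the shift $\mathrm{d}\lambda=\mathrm{d}w+\mathrm{d}u$. Using Lemma \ref{lemma-partialforuinsmallquantum}, $T_i\star\pi=\partial_{t_i}u\cdot\pi$. The polar part of the $\mathrm{d}t_i$-coefficient is therefore
\[
-\partial_{t_i}u\,\frac{\pi\mu}{w}+\frac{\pi\mu}{w}\,\partial_{t_i}u=0,
\]
so the connection becomes $\mathrm{d}+\frac{\pi\mu}{w}\mathrm{d}w+(\text{holomorphic})$. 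This cancellation is exactly where the simple-pair hypothesis and Lemma \ref{lemma-partialforuinsmallquantum} enter.

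For existence, I write the connection as $\mathrm{d}+\frac{N}{w}\mathrm{d}w+\Omega$ with $N=\pi\mu$ nilpotent and $\Omega$ holomorphic. I seek $\hat y=\sum_{k\ge0}a_k(\bft^{(2)})w^k$. The recursion in $w$ is $(k+N)a_k=(\text{terms from }a_0,\dots,a_{k-1})$. The operator $k+N$ is invertible for $k\ge1$ since $N$ is nilpotent, and for $k=0$ the condition $Na_0=0$ is satisfied by $a_0=\Psi$. Convergence of the formal solution follows from the standard majorant argument for regular-singular systems with non-resonant shifts, since the eigenvalues of $N$ are all $0$ and no positive integer occurs. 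Equivalently, one can use the local normal form: a fundamental solution $F(\bft^{(2)},w)\,w^{-N}$ with $F$ holomorphic and invertible. Flatness in $\bft^{(2)}$ then requires checking that the solution determined by $a_0=\Psi(\bft^{(2)})$ is compatible with the $t$-directions, i.e. that $\Psi$ satisfies the restricted equation on $D$. This is the step I expect to be the main obstacle.

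For this compatibility, I use integrability of $\widehat\nabla^{\rm sm}$ together with the fundamental-solution description. Near $D$, flat sections are $F\,w^{-N}c$ with $c$ constant, and they are holomorphic exactly when $Nc=0$, modulo the log term. So the holomorphic flat sections form a subspace of dimension $s-1$, and restricting to $D$ gives a flat subbundle $\ker N$ for the residue connection. Since $\ker N\supset\bC\Psi$ is not automatically what is needed, I also have to show that restriction to $D$ lands on the line $\bC\Psi$ after projection. One must then verify that there is a unique holomorphic flat section whose value on $D$ is exactly $\Psi$.

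Here I would compute directly that $\Psi$, viewed as a section over $D$, is flat for the induced connection $\mathrm{d}+\Omega|_D$ restricted to $\ker N$. This should follow from differentiating $\hat c_1\Psi=u\Psi$ as in the proof of Lemma \ref{lemma-partialforuinsmallquantum}, together with $(\Psi,\Psi)=1$, which gives $(\partial\Psi,\Psi)=0$. Uniqueness then comes from the recursion being uniquely solvable for $k\ge1$. The remaining choice of $a_0$ within $\ker N$ is fixed by the condition $\hat y|_D=\Psi$, and any two solutions differ by a holomorphic flat section vanishing on $D$. Such a section has $a_0=0$, so all $a_k=0$ and it vanishes identically.
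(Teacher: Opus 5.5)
Your proposal is correct and follows essentially the paper's argument. You pass to the coordinate $\lambda-u(\bft^{(2)})$, use Lemma \ref{lemma-partialforuinsmallquantum} to cancel the poles in the $\mathrm{d}t_i$-directions, observe that the residue is nilpotent and kills $\Psi$, and identify the restriction to $\{\lambda=u\}$ with $\Psi$ by showing $\Psi$ is flat for the residual connection, using the differentiated eigen-equation and $(\partial_{t_i}\Psi,\Psi)=0$, exactly as the paper does. The differences are cosmetic: the paper gets existence from the logarithmic normal form $\hat U\exp(-R(\bft^{(2)}_0)\log\bar\lambda)v$ rather than a power-series recursion, and your hedging in the penultimate paragraph is unnecessary, since residual-flatness of $\Psi$ plus integrability (which makes $\nabla_{t_i}\hat y$ a holomorphic $w$-flat section vanishing on $D$, hence zero) already gives flatness in all directions.
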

\begin{proof}
    Consider the function $\bar{\lambda}(\bft^{(2)},\lambda):=\lambda-u(\bft^{(2)})$ on $U\times\bC_\lambda$. Then $\bar{\lambda},t_1,...,t_{s'}$ are holomorphic coordinates on $U\times\bC_{\lambda}$ and the divisor $\{\lambda=u\}$ is $\{\bar{\lambda}=0\}$. In terms of these new coordinates, the connection $\widehat{\nabla}^{{\rm{sm}}}$ with respect to a frame of constant sections reads
    \begin{align*}
    \widehat{\nabla}^{{\rm{sm}}}&=\mathrm{d}+\suml_{i=1}^{s'}\left(\partial_{t_i}u(\bft^{(2)})-(T_i\star_{\bft^{(2)}})\right)M(\bft^{(2)},\bar{\lambda})\mu\mathrm{d}t_i+M(\bft^{(2)},\bar{\lambda})\mu\mathrm{d}\bar{\lambda}.
    \end{align*}
   where $M(\bft^{(2)},\bar{\lambda}):=\left(\bar{\lambda}-(\hat{c}_1(\bft^{(2)})-u(\bft^{(2)}))\right)^{-1}$.

   We show that the endomorphisms
$M_i(\bft^{(2)},\bar{\lambda}):=\left(\partial_{t_i}u(\bft^{(2)})-(T_i\star_{\bft^{(2)}})\right)M(\bft^{(2)},\bar{\lambda})$  and $\bar{\lambda}M(\bft^{(2)},\bar{\lambda})$ are holomorphic near $P_0$, i.e., when $\bft^{(2)}$ is near $\bft^{(2)}_0$ and $\bar{\lambda}$ is near $0$. Around $P_0$, we let $\Psi(\bft^{(2)}),f_1(\bft^{(2)}),...,f_{s-1}(\bft^{(2)})$ be a local frame, and set $e_j:=f_j-(f_j,\Psi)\Psi$. Then $\underline{e}:=(\Psi,e_1,...,e_{s-1})$ is a local frame near $P_0$ satisfying $(\Psi,e_j)=0$, since $\left(\Psi,\Psi\right)=1$. It suffices to show that applying $M_i(\bft^{(2)},\bar{\lambda})$ and $\bar{\lambda}M(\bft^{(2)},\bar{\lambda})$ on $\underline{e}$, we get holomorphic sections. On the one hand,   $\Psi(\bft^{(2)})$ is an eigenvector of $\hat{c}_1(\bft^{(2)})$ with eigenvalue $u(\bft^{(2)})$, implying $M(\bft^{(2)},\bar{\lambda})\left(\Psi(\bft^{(2)})\right)=\bar{\lambda}^{-1}\Psi(\bft^{(2)})$. Consequently, from Lemma \ref{lemma-partialforuinsmallquantum}, we have 
    \[M_i(\bft^{(2)},\bar{\lambda})(\Psi(\bft^{(2)}))=0,\quad\bar{\lambda}M(\bft^{(2)},\bar{\lambda})(\Psi(\bft^{(2)}))=\Psi(\bft^{(2)}).
    \]
    On the other hand, let $H'_{\bft^{(2)}}\subset H^*(X)$ be the direct sum of generalized eigenspaces of $\hat{c}_1(\bft^{(2)})$ with eigenvalues $\neq u(\bft^{(2)})$, and set $M'(\bft^{(2)}):=\left(\hat{c}_1(\bft^{(2)})-u(\bft^{(2)})\right)|_{H'_{\bft^{(2)}}}$. Then $M'(\bft^{(2)})$ is invertible, and 
      \[
    M(\bft^{(2)},\bar{\lambda})|_{H'_{\bft^{(2)}}}=(-1)^{s-1}M'(\bft^{(2)})^{-1}\suml_{k\ge0}\left(\bar{\lambda}M'(\bft^{(2)})^{-1}\right)^k,
    \]
when $|\bar{\lambda}|$ is sufficiently small. Note that $(\Psi,e_j)=0$ gives $e_j(\bft^{(2)})\in H'_{\bft^{(2)}}$ by Lemma \ref{lemma-linearalgebra} (ii). As a consequence, the section  $M(\bft^{(2)},\bar{\lambda})e_j(\bft^{(2)})$ is holomorphic  near $P_0$. This proves that $M_i(\bft^{(2)},\bar{\lambda})$ and $\bar{\lambda}M(\bft^{(2)},\bar{\lambda})$ are holomorphic near $P_0$.

    So near $P_0$, the connection $\widehat{\nabla}^{{\rm{sm}}}$ is either holomorphic or has logarithmic singularities along $\{\bar{\lambda}=0\}$, and we only need to consider the singular case. The residue of $\widehat{\nabla}^{{\rm{sm}}}$ along $\{\bar{\lambda}=0\}$ is the following endomorphism of the trivial $H^*(X)$-bundle over $\{\bar{\lambda}=0\}$:
    \[
R(\bft^{(2)}):=\left(\bar{\lambda}M(\bft^{(2)},\bar{\lambda})\mu\right)_{\bar{\lambda}=0}.
    \]
    From the discussions in the preceding paragraph, we see that 
    \[
    \left(\bar{\lambda}M(\bft^{(2)},\bar{\lambda})\right)_{\bar{\lambda}=0}(\Psi(\bft^{(2)}))=\Psi(\bft^{(2)}),\quad\left(\bar{\lambda}M(\bft^{(2)},\bar{\lambda})\right)_{\bar{\lambda}=0}|_{H'_{\bft^{(2)}}}=0.
    \]
    Moreover, degree-counting gives $(\mu(\Psi(\bft^{(2)})),\Psi(\bft^{(2)}))=0$, implying $\mu(\Psi(\bft^{(2)}))\in H'_{\bft^{(2)}}$ since $H'_{\bft^{(2)}}$ is the orthogonal complement of $\bC\Psi(\bft^{(2)})$ with respect to the Poincar\'e pairing. Now we can check that $R(\bft^{(2)})\Psi(\bft^{(2)})=0$ and $R(\bft^{(2)})^2e_j(\bft^{(2)})=0$. So $R$ is nilpotent, and as a consequence, a section near $P_0$ is $\widehat{\nabla}^{{\rm{sm}}}$-flat if and only if it is of the form (see, e.g., \cite[Theorem 2 and Remark 2]{YT75}):
    \begin{align}\label{eq--fundamentalsolutionofLaplacedualofsmallconnection}
   (\bft^{(2)},\bar{\lambda})\mapsto \hat U(\bft^{(2)},\bar{\lambda})\exp\left(-R(\bft^{(2)}_0)\log\bar{\lambda}\right)(v),\quad v\in H^*(X).
    \end{align}
    Here $\hat U(\bft^{(2)},\bar{\lambda})$ is a holomorphic endomorphism near $P_0$ with $\hat U(\bft^{(2)}_0,0)=\mathrm{id}$, and $R(\bft^{(2)}_0)$ is viewed as a constant endomorphism near $P_0$. Let $\hat{y}(\bft^{(2)},\bar{\lambda})$ be the $\widehat{\nabla}^{{\rm{sm}}}$-flat near $P_0$ with $v=\Psi(\bft^{(2)}_0)$. Then it is holomorphic near $P_0$ since $R(\bft^{(2)}_0)\Psi(\bft^{(2)}_0)=0$.

    It remains to show that $\hat{y}(\bft^{(2)},0)=\Psi(\bft^{(2)})$. Let $\widehat{\nabla}^{\mathrm{sm,res}}$ be the residual connection of $\widehat{\nabla}^{{\rm{sm}}}$ along the divisor $\{\bar{\lambda}=0\}$, and with respect to a frame of constant sections it reads $\widehat{\nabla}^{\mathrm{sm,res}}=\mathrm{d}+\suml_{i=1}^{s'}M_i(\bft^{(2)},0)\mu\mathrm{d}t_i$.
    Note that $\hat{y}(\bft^{(2)},0)$ is a $\widehat{\nabla}^{\mathrm{sm,res}}$-flat section over $\{\bar{\lambda}=0\}$ and $\hat{y}(\bft^{(2)}_0,0)=\Psi(\bft^{(2)}_0)$. So it suffices to show that $\partial_{t_i}\Psi(\bft^{(2)})+M_i(\bft^{(2)},0)\mu\Psi(\bft^{(2)})=0$. Note that both terms are in $H'_{\bft^{(2)}}$, and we only need to prove that $M'(\bft^{(2)})\left(\partial_{t_i}\Psi(\bft^{(2)})+M_i(\bft^{(2)},0)\mu\Psi(\bft^{(2)})\right)=0$, or equivalently, \[\left(u(\bft^{(2)})-\hat{c}_1(\bft^{(2)})\right)\partial_{t_i}\Psi(\bft^{(2)})+\left(\partial_{t_i}u(\bft^{(2)})-(T_i\star_{\bft^{(2)}})\right)\mu\Psi(\bft^{(2)})=0.\] This follows from the flatness of $\widehat{\nabla}^{{\rm{sm}}}$ and Lemma \ref{lemma-partialforuinsmallquantum}.
    
    This proves the existence. Uniqueness follows from \eqref{eq--fundamentalsolutionofLaplacedualofsmallconnection}. 
\end{proof}

We  use Definition \ref{def-respect} for   elements in $\mathcal{S}^{\rm sm}$ with exactly the same words.
\begin{lemma}\label{lemma: existy}
    Suppose that  $(u,\Psi)$ is a simple rightmost pair over a domain $U$ inside the $(\mathrm{SR})$-region of $X$. Then there exists a unique $y\in \mathcal{S}^{\rm sm}$ such that for any $\bft^{(2)}_0\in U$, $y$ respects $(u,\Psi)$ over an open neighborhood of $\bft^{(2)}_0$ with phase zero.
 
\end{lemma}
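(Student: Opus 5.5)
The plan is to build $y$ locally by Laplace transform from the Laplace-dual flat section of Lemma \ref{lemma--localholomorphicflatsectionofsmalldualconn}. We then use the rightmost property to continue it along all of $U$ and to show that it is unique.

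\textbf{Local construction.} Fix $\bft^{(2)}_0\in U$ and let $\hat{y}$ be the holomorphic $\widehat{\nabla}^{\rm sm}$-flat section near $P_0=(\bft^{(2)}_0,u(\bft^{(2)}_0))$ with $\hat{y}(\bft^{(2)},u(\bft^{(2)}))=\Psi(\bft^{(2)})$. Since $u$ is rightmost, $\hat{c}_1(\bft^{(2)})$ has no eigenvalue on the half-line $u(\bft^{(2)})+\bR_{\ge0}$ other than $u$ itself. Hence $\hat{y}$ continues analytically along the path $\lambda=u(\bft^{(2)})+\rho$, $\rho\ge0$, and also along nearby rays $u+e^{\mathbf{i}\theta}\bR_{\ge0}$ with $|\theta|<\pi/2+\varepsilon$, uniformly for $\bft^{(2)}$ near $\bft^{(2)}_0$. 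We define
\[
y(\bft^{(2)},z):=\frac{1}{z}\int_{u(\bft^{(2)})}^{u(\bft^{(2)})+\infty\cdot e^{\mathbf{i}\theta}} e^{-\lambda/z}\,\hat{y}(\bft^{(2)},\lambda)\,\mathrm{d}\lambda,
\]
up to the normalization of \cite[Section 2.5]{GGI16}. Convergence follows because $\widehat{\nabla}^{\rm sm}$ has a logarithmic singularity at $\lambda=\infty$, so $\hat{y}$ grows at most polynomially in $\lambda$. The standard Laplace correspondence then shows that $y$ is $\widetilde{\nabla}^{\rm sm}$-flat. Watson's lemma, together with holomorphy of $\hat{y}$ at $\lambda=u$ and its value $\Psi$ there, gives $e^{u/z}y(\bft^{(2)},z)\to\Psi(\bft^{(2)})$ as $z\to0$ in the sector $|\arg z|<\pi/2+\varepsilon$ (one rotates $\theta$ with $\arg z$). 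So $y$ respects $(u,\Psi)$ over a neighborhood of $\bft^{(2)}_0$ with phase zero.

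\textbf{Uniqueness at a point.} Suppose $y'\in\mathcal{S}^{\rm sm}$ also respects $(u,\Psi)$ at $\bft^{(2)}_0$ with phase zero. Then $e^{u/z}(y-y')\to0$ along the positive real axis, so in particular it has moderate growth, and $y-y'\in\mathcal{A}(\bft^{(2)}_0)$. Since $\dim\mathcal{A}(\bft^{(2)}_0)=1$ by \cite[Remark 3.1.9]{GGI16}, this forces $y-y'=cy$. Taking limits gives $(1-c)\Psi=\Psi$, hence $c=0$ and $y=y'$. A flat section in $\mathcal{S}^{\rm sm}$ is a global object determined by its values at one point, so the local solution at $\bft^{(2)}_0$ is unique as an element of $\mathcal{S}^{\rm sm}$.

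\textbf{Globalization.} For each $\bft^{(2)}_0\in U$ we obtain a unique $y_{\bft^{(2)}_0}\in\mathcal{S}^{\rm sm}$ that respects $(u,\Psi)$ over some neighborhood $W_{\bft^{(2)}_0}$. If two neighborhoods $W_{\bft^{(2)}_0}$ and $W_{\bft^{(2)}_1}$ meet at a point $\bft^{(2)}$, both sections respect $(u,\Psi)$ at $\bft^{(2)}$, so by the uniqueness step $y_{\bft^{(2)}_0}=y_{\bft^{(2)}_1}$. The set of points whose section equals a fixed one is therefore open and closed, and since $U$ is connected the assignment is constant. Uniqueness of $y$ follows from the pointwise uniqueness.

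\textbf{Main obstacle.} I expect the difficult part to be the local Laplace step: controlling $\hat{y}$ uniformly along the rays in $\bft^{(2)}$, and carrying out the sector rotation that yields the full opening $\pi/2+\varepsilon$ and not merely the positive real axis. I would follow \cite[Proposition 2.5.1]{GGI16} and its proof, replacing semisimplicity by the simplicity of $u$, which is all Lemma \ref{lemma--localholomorphicflatsectionofsmalldualconn} needs.
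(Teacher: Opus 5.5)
Your proposal is correct and follows essentially the same route as the paper. Both take the Laplace transform of the local $\widehat{\nabla}^{\rm sm}$-flat section from Lemma \ref{lemma--localholomorphicflatsectionofsmalldualconn} along $u(\bft^{(2)})+\bR_{\ge0}$, tilted slightly to cover the sector $|\arg z|<\pi/2+\varepsilon$, and both get uniqueness from $\dim_\bC\mathcal{A}(\bft^{(2)}_0)=1$. The only difference is in the gluing: the paper glues the local sections by noting $\hat{y}_{\bft^{(2)}_0}=\hat{y}_{\bft^{(2)}_1}$ for nearby points, whereas you glue them via your pointwise uniqueness argument, which works equally well on the connected set $U$.
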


\begin{proof}
     Let $\hat{y}_{\bft^{(2)}_0}(\bft^{(2)},\lambda)$ be the $\widehat{\nabla}^{{\rm{sm}}}$-flat section from Lemma \ref{lemma--localholomorphicflatsectionofsmalldualconn}, which is holomorphic near $(\bft^{(2)}_0,u(\bft^{(2)}_0))$ and satisfies $\hat{y}(\bft^{(2)},u(\bft^{(2)}))=\Psi(\bft^{(2)})$. Consider the Laplace transform
    \[
    \bar{y}_{\bft^{(2)}_0}(\bft^{(2)},z):={1\over z}\int_{u(\bft^{(2)})+\bR_{\ge0}\rme^{\mathbf{i}0}}\hat{y}_{\bft^{(2)}_0}(\bft^{(2)},\lambda)\rme^{-{\lambda\over z}}\mathrm{d}\lambda, \quad |\arg z|<{\pi\over 2}, \bft^{(2)}\text{ near }\bft^{(2)}_0.
    \]
Note that $\hat{y}_{\bft^{(2)}_0}(\bft^{(2)},\lambda)$ is holomorphic near $\lambda=u(\bft^{(2)})$, and has moderate growth as $\lambda\to\infty$ since $\widehat{\nabla}^{{\rm{sm}}}$ has logarithmic singularities along the smooth divisor $\{\lambda=\infty\}$. Therefore the Laplace transform is well defined. Slightly changing the slope of the integration path, we can analytically continue $\bar{y}_{\bft^{(2)}_0}(\bft^{(2)},z)$ to $|\arg z|<\frac{\pi}{2}+\varepsilon$ for sufficiently small $\varepsilon>0$. Using integration by parts, we see that $\bar{y}_{\bft^{(2)}_0}(\bft^{(2)},z)$ is $\widetilde{\nabla}^{\rm{sm}}$-flat and respects $(u,\Psi)$ with phase zero over an open neighborhood of $\bft^{(2)}_0$. When $\bft^{(2)}_1$ is sufficiently close to $\bft^{(2)}_0$, it follows from  $\hat{y}_{\bft^{(2)}_0}(\bft^{(2)},\lambda)=\hat{y}_{\bft^{(2)}_1}(\bft^{(2)},\lambda)$ that $\bar{y}_{\bft^{(2)}_0}(\bft^{(2)},z)=\bar{y}_{\bft^{(2)}_1}(\bft^{(2)},z)$. This proves the existence part of the lemma.

    For the uniqueness, suppose that $y_1(\bft^{(2)},z)$ and $y_2(\bft^{(2)},z)$ are two such $\widetilde{\nabla}^{\rm{sm}}$-flat sections, and $\bft^{(2)}_0\in U$. Then we have $\rme^{{u(\bft^{(2)}_0)\over z}}y_1(\bft^{(2)}_0,z),\rme^{{u(\bft^{(2)}_0)\over z}}y_2(\bft^{(2)}_0,z)\to\Psi(\bft^{(2)}_0)$, as $z\to0$ along  the positive real axis. Then the functions $y_1(\bft^{(2)}_0,z)$ and $y_2(\bft^{(2)}_0,z)$ of $z$ are in $\mathcal{A}(\bft^{(2)}_0)$ and equal to each other, since $\dim_\bC\mathcal{A}(\bft^{(2)}_0)=1$. As a consequence, we have $y_1(\bft^{(2)},z)=y_2(\bft^{(2)},z)$. This proves the uniqueness part of the lemma.
\end{proof}

\begin{theo}\label{thm--gammaIoverdomain}
    Suppose that $U$ is a domain inside the $(\mathrm{SR})$-region of $X$. If $X$ satisfies Gamma-I at some $\bft^{(2)}_0\in U$, then $X$ satisfies Gamma-I over $U$.
\end{theo}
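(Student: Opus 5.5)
The plan is to use the global flat section of Lemma \ref{lemma: existy} together with analytic continuation. First, reduce to the case where $U$ carries a simple rightmost pair. Every point of $U$ has a simply connected neighborhood in $U$, and there the normalized idempotent $\Psi$ is well defined up to sign. Membership of $\mathcal{Z}^{K,{\rm sm}}(\cO)$ in $\mathcal{A}(\bft^{(2)})$ is insensitive to the sign of $\Psi$ and depends only on $u(\bft^{(2)})$. So it suffices to show that the set $U_{\rm I}\subset U$ of points where $X$ satisfies Gamma-I is open and closed in $U$. It is nonempty by hypothesis, and connectedness of $U$ then gives $U_{\rm I}=U$. Both openness and closedness are local statements, so we may work on a small ball $W\subset U$, where $(u,\Psi)$ is a simple rightmost pair.

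On $W$, Lemma \ref{lemma: existy} gives a unique $y\in\mathcal{S}^{\rm sm}$ which, near every point of $W$, respects $(u,\Psi)$ with phase zero. In particular, for each $\bft^{(2)}\in W$, $\rme^{u(\bft^{(2)})/z}y(\bft^{(2)},z)\to\Psi(\bft^{(2)})$ as $z\to0^+$. Hence $y(\bft^{(2)},\cdot)\in\mathcal{A}(\bft^{(2)})$ is nonzero, and since $\dim\mathcal{A}(\bft^{(2)})=1$ by \cite[Remark 3.1.9]{GGI16}, we get $\mathcal{A}(\bft^{(2)})=\bC\,y(\bft^{(2)},\cdot)$ for every $\bft^{(2)}\in W$. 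The key observation is that both $y$ and $Z:=\mathcal{Z}^{K,{\rm sm}}(\cO)$ are single global elements of the finite-dimensional space $\mathcal{S}^{\rm sm}$, i.e.\ flat sections over all of $H^2(X)\times\widetilde{\bC^\times}$. Thus the statement ``$Z(\bft^{(2)},\cdot)\in\bC\,y(\bft^{(2)},\cdot)$'' at one point means the following: there is $c\in\bC$ with $Z-cy$ vanishing at $(\bft^{(2)},z)$ for all $z$. A flat section vanishing on one fiber vanishes identically on the connected base, so $Z=cy$ in $\mathcal{S}^{\rm sm}$, and hence $Z(\bft'^{(2)},\cdot)\in\mathcal{A}(\bft'^{(2)})$ for all $\bft'^{(2)}\in W$. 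This argument shows that $U_{\rm I}\cap W$ is either empty or all of $W$. Openness follows immediately. Closedness follows as well: if $\bft^{(2)}\in U$ is a limit of points of $U_{\rm I}$, take a ball $W\ni\bft^{(2)}$; it meets $U_{\rm I}$, so $W\subset U_{\rm I}$.

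The main point to check carefully is the identification $\mathcal{A}(\bft^{(2)})=\bC\,y(\bft^{(2)},\cdot)$ at every point of $W$, not only near a base point. This requires that the respecting property supplied by Lemma \ref{lemma: existy} holds at each point of the ball, which is exactly what the lemma asserts. One also has to make sure that restricting a global flat section to a fiber $\{\bft^{(2)}\}\times\widetilde{\bC^\times}$ is injective, so that the one-dimensionality statement, which is formulated fiberwise, propagates. This injectivity holds because the value at a single point $(\bft^{(2)},z)$ already determines a flat section. With these two points settled, the theorem reduces to the connectedness argument above.
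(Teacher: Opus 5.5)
Your proof is correct. Its core is the same as the paper's. Lemma \ref{lemma: existy} gives a flat section $y$ with $\mathcal{A}(\bft^{(2)})=\bC\,y(\bft^{(2)},\cdot)$ at every point of the region. One-dimensionality of $\mathcal{A}$ at a Gamma-I point then forces $\mathcal{Z}^{K,{\rm sm}}(\cO)=cy$ as global elements of $\mathcal{S}^{\rm sm}$, and membership at the other points follows.

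The only difference is the globalization step.
\begin{itemize}
\item The paper fixes a target point $\bft^{(2)}_1$ and chooses a single simply connected subdomain $U'\subset U$ containing both $\bft^{(2)}_0$ and $\bft^{(2)}_1$, so that $\Psi$ is single-valued on $U'$. It then applies the lemma once over $U'$. The existence of such a $U'$ (for instance a thin neighbourhood of an embedded arc joining the two points) is used there without comment.
\item You apply the lemma only on small balls and finish with an open--closed connectedness argument. This avoids constructing $U'$. It also needs only the local existence part of the lemma's proof: a flat section respecting $(u,\Psi)$ on some neighbourhood of a given point. Neither the uniqueness part nor the continuation along $U$ is used, because connectedness of $U$ together with the fact that elements of $\mathcal{S}^{\rm sm}$ are global flat sections does that work.
\end{itemize}

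The paper's route is shorter to write. Yours is a little more self-contained on the topological side.
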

\begin{proof}
 For any $\bft^{(2)}_1\in U$, let $U'\subset U$ be a simply connected domain containing both $\bft^{(2)}_0$ and $\bft^{(2)}_1$, and such that $(u,\Psi)$ is a simple rightmost pair over $U'$. From Lemma \ref{lemma: existy}, let $y$ be the $\widetilde{\nabla}^{\rm{sm}}$-flat section respecting $(u,\Psi)$ over $U'$ with phase zero. Then $y\in\mathcal{A}(\bft^{(2)}_0)$. Since $\mathcal{Z}^{K,{\rm{sm}}}(\cO)\in\mathcal{A}(\bft^{(2)}_0)$ and $\dim_\bC\mathcal{A}(\bft^{(2)}_0)=1$, it follows that there exists a nonzero $c\in\bC$ satisfying $\mathcal{Z}^{K,{\rm{sm}}}(\cO)=cy$. Now $y\in\mathcal{A}(\bft^{(2)}_1)$ implies that $\mathcal{Z}^{K,{\rm{sm}}}(\cO)\in\mathcal{A}(\bft^{(2)}_1)$. This finishes the proof of the theorem.
\end{proof}

\subsection{AEFS for $\widetilde{\nabla}^{\rm{sm}}$ and the strategy-type theorem}\label{sec-AEFS and strategy}

Let $K_{ts}\subset H^2(X)$ be the subset of  {tame semisimple} points in the K\"ahler moduli, i.e. for $\bft^{(2)}\in K_{ts}$, $\hat{c}_1(\bft^{(2)})$ has only simple eigenvalues. In this subsection, we assume that $K_{ts}\ne\varnothing$, in which case $H^2(X)\setminus K_{ts}$ is a divisor of $H^2(X)$.

For a sufficiently small domain $U\subset K_{ts}$, by the Implicit Function Theorem, there are holomorphic maps $U\xrightarrow{u_i}\bC_\lambda$ and $U\xrightarrow{\psi_i} H^*(X)$ ($1\le i\le s$), such that for each $\bft^{(2)}\in U$, $u_i(\bft^{(2)})$ is an eigenvalue of $\hat{c}_1(\bft^{(2)})$ and $\psi_i(\bft^{(2)})$ is the idempotent generator of the associated one-dimensional eigenspace. The functions $u_i$'s, and hence $\psi_i$'s, are unique up to ordering. When the normalized idempotent $\Psi_i:=(\psi_i,\psi_i)^{-{1\over 2}}\psi_i$ is  a well-defined holomorphic map  (which holds, for instance, if $U$ is simply connected), we say that $U$ is properly-chosen with respect to $\{(u_i,\Psi_i)\}$.

\begin{lemma}\label{lemma-holomorphicflatsectionwrtlaplacedualconn}
    Suppose that $U\subset K_{ts}$ is properly-chosen with respect to $\{(u_i,\Psi_i)\}$, and $\bft^{(2)}_0\in U$. 
 Then near $P_i:=(\bft^{(2)}_0,u_i(\bft^{(2)}_0))\in U\times\bC_\lambda$, there is a unique holomorphic $\widehat{\nabla}^{\rm{sm}}$-flat section $\hat{y}_i(\bft^{(2)},\lambda)$ satisfying $\hat{y}_i(\bft^{(2)},u_i(\bft^{(2)}))=\Psi_i(\bft^{(2)})$.
\end{lemma}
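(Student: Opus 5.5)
This is a direct reduction to Lemma \ref{lemma--localholomorphicflatsectionofsmalldualconn}. The plan is to check that for each fixed $i$ the pair $(u_i,\Psi_i)$, restricted to a suitable neighbourhood of $\bft^{(2)}_0$, is a simple pair in the sense used before Lemma \ref{lemma-partialforuinsmallquantum}. We then apply that lemma verbatim at the point $P_i$.

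First, since $U\subset K_{ts}$, every eigenvalue of $\hat{c}_1(\bft^{(2)})$ is simple for each $\bft^{(2)}\in U$. In particular $u_i(\bft^{(2)})$ is a simple eigenvalue, and its eigenspace is one-dimensional. Because $U$ is properly-chosen with respect to $\{(u_i,\Psi_i)\}$, the maps $u_i:U\to\bC$ and $\Psi_i:U\to H^*(X)$ are holomorphic. Moreover, $\Psi_i(\bft^{(2)})$ is a normalized idempotent spanning that eigenspace: it is a multiple of $\psi_i(\bft^{(2)})$, so $\hat c_1\Psi_i=u_i\Psi_i$, and $(\Psi_i,\Psi_i)=1$. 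Hence $(u_i,\Psi_i)$ is a simple pair over the domain $U$, or over the connected component of $U$ containing $\bft^{(2)}_0$ if $U$ is not assumed connected.

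Next we apply Lemma \ref{lemma--localholomorphicflatsectionofsmalldualconn} with $(u,\Psi)=(u_i,\Psi_i)$ and base point $\bft^{(2)}_0$. It gives, near $P_i=(\bft^{(2)}_0,u_i(\bft^{(2)}_0))$, a unique holomorphic $\widehat{\nabla}^{\rm sm}$-flat section $\hat y_i$ with $\hat y_i(\bft^{(2)},u_i(\bft^{(2)}))=\Psi_i(\bft^{(2)})$. Uniqueness is inherited from the same lemma, since it came from the local normal form \eqref{eq--fundamentalsolutionofLaplacedualofsmallconnection} of flat sections near the logarithmic divisor $\{\lambda=u_i\}$. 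This holds for each $i=1,\dots,s$.

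There is essentially no obstacle. The only point requiring care is that the hypotheses of Lemma \ref{lemma--localholomorphicflatsectionofsmalldualconn} use only simplicity of the single eigenvalue $u$, not rightmostness. We confirm this by rereading its proof and its inputs, Lemma \ref{lemma-linearalgebra} and Lemma \ref{lemma-partialforuinsmallquantum}, which use only $\dim V_\lambda=1$ and the symmetry of $\hat c_1$ and antisymmetry of $\mu$. So the tame semisimple assumption suffices for every $i$ simultaneously.
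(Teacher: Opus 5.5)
Your proposal is correct and is exactly the paper's argument: the paper proves this in one line by noting that each $(u_i,\Psi_i)$ is a simple pair over $U$ and invoking Lemma \ref{lemma--localholomorphicflatsectionofsmalldualconn}. Your added check that that lemma uses only simplicity of $u$, and not rightmostness, is the right point to verify, and it holds.
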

\begin{proof}
    This follows from Lemma \ref{lemma--localholomorphicflatsectionofsmalldualconn} since each $(u_i,\Psi_i)$ is a simple pair over $U$.
\end{proof}
\begin{remark}\label{rmk39}
    Here we do not assume $\bft^{(2)}_0\in B$, and so the proof does not use the assumption that $\{u_i\}$ is the canonical coordinate near $\bft^{(2)}_0$.
\end{remark}

Let $\phi\in\bR$ be an admissible phase at $\bft^{(2)}_0$. Now we follow \cite[Section 2]{GGI16} to use the Laplace transform of the above $\{\hat{y}_i\}$ to construct the AEFS $\{y_i^\phi\}$ of $\nabla^{\rm{sm}}$ near $\bft^{(2)}_0$ associated to the phase $\phi$ with respect to $\{\Psi_i\}$. Let $L_i(\bft^{(2)},\phi):=u_i(\bft^{(2)})+\bR_{\ge0}\rme^{\mathbf{i}\phi}$. Set
\[y_i^\phi(\bft^{(2)},z):={1\over z}\int_{L_i(\bft^{(2)},\phi)}\hat{y}_i(\bft^{(2)},\lambda)\rme^{-{\lambda\over z}}\mathrm{d}\lambda,\quad |\arg z-\phi|<{\pi\over 2},\bft^{(2)}\text{ near }\bft^{(2)}_0.\]
The AEFS $y_1^\phi,\dots,y_{s}^\phi$ near $\bft^{(2)}_0$  are semiorthonormal with respect to the phase $\phi$ in the sense that $[y_i^\phi,y_i^\phi)=1$ and $[y_i^\phi,y_j^\phi)=0$ if $\Im(\rme^{-\mathbf{i}\phi}u_i(\bft^{(2)}_0))<\Im(\rme^{-\mathbf{i}\phi}u_j(\bft^{(2)}_0))$.

The above $L_i(\bft^{(2)}_0,\phi)$ is an admissible path at $\bft^{(2)}_0$ starting from $u_i(\bft^{(2)}_0)$. For a regular embedding $$\gamma: \bR_{\ge0}\rightarrow \bC_\lambda,$$i.e. a smooth embedding with nowhere-vanishing $\gamma'(t)$, we say that it is \textbf{admissible at $\bft^{(2)}_0$ with end-phase $\phi\in\bR$}, if $\gamma(0)\in\Spec(\hat{c}_1(\bft^{(2)}_0))$, $\gamma(\bR_{>0})\cap\Spec(\hat{c}_1(\bft^{(2)}_0))=\varnothing$ and $\gamma(t)$ satisfies $\arg\gamma'(t)=\phi$ when $t\gg0$. When $\gamma(0)=u_i(\bft^{(2)}_0)$, the following Laplace transform $y^\gamma(z):={1\over z}\int_\gamma\hat{y}_i(\bft^{(2)}_0,\lambda)\rme^{-{\lambda\over z}}\mathrm{d}\lambda$ when $|\arg z-\phi|<{\pi\over 2}$, is in $\mathcal{S}^{\rm{sm}}_{\bft^{(2)}_0}:=\{y(\bft^{(2)}_0,z)\mid\widetilde{\nabla}^{\rm{sm}}y(\bft^{(2)},z)=0\}$, and there is a unique $y^{\gamma}(\bft^{(2)},z)\in\mathcal{S}^{\rm{sm}}$ such that $y^{\gamma}(\bft^{(2)}_0,z)=y^\gamma(z)$. Note that $\hat{y}_i(\bft^{(2)},\lambda)$ is the unique $\widehat{\nabla}^{\rm{sm}}$-flat section holomorphic near $(\bft^{(2)}_0,u_i(\bft^{(2)}_0))$ satisfying $\hat{y}_i(\bft^{(2)}_0,u_i(\bft^{(2)}_0))=\Psi_i(\bft_0^{(2)})$ by Lemma \ref{lemma-holomorphicflatsectionwrtlaplacedualconn}. We say that $y^\gamma\in\mathcal{S}^{\rm{sm}}$ is \textbf{represented by $\gamma$ in $\mathbb{C}_\lambda$ at $\bft^{(2)}_0$ with respect to $(u_i(\bft^{(2)}_0),\Psi_i(\bft^{(2)}_0))$}. Note that two $\widetilde{\nabla}^{\rm{sm}}$-flat sections represented by the same admissible path at $\bft^{(2)}_0$  differ at most by a sign, due to sign ambiguity of normalized idempotents.

Mutations of $L_i(\bft^{(2)}_0,\phi)$ give mutations of $y_i^\phi$. The right mutation of $L_i(\bft^{(2)}_0,\phi)$ with respect to $u_j(\bft^{(2)}_0)$ is depicted in Figure \hyperref[figure-rightmuation]{1(A)}. During the deformation  from  $L_i(\bft^{(2)}_0,\phi)$ to $L'_i$, the path is assumed to cross only one eigenvalue $u_j(\bft^{(2)}_0)$. Let $y'_i$ be the corresponding right mutation of $y_i^\phi$, which is determined by the Laplace transform of $\hat{y}_i(\bft^{(2)}_0,\lambda)$ over $L'_i$. Then $y'_i=y^\phi_i-[y^\phi_i,y^\phi_j)\cdot y^\phi_j$. Similarly, let $y''_i$ be the left mutation of $y_i$ with respect to $u_k(\bft^{(2)}_0)$ as depicted in Figure \hyperref[figure-leftmuation]{1(B)}, and we have $y''_i=y^\phi_i-[y^\phi_k,y^\phi_i)\cdot y^\phi_k$. We refer the reader to \cite[Section 2]{GGI16} for details.

\begin{figure}[htbp]

    \centering
\begin{minipage}{0.55\textwidth}
\begin{flushleft}
  \begin{tikzpicture}[x=30pt,y=30pt]
\draw (0,0) node {\phantom{A}};

\filldraw (1.2, 0) circle [radius =0.05];
\draw (1.1,0.3) node {\small $u_j(\bft^{(2)}_0)$};

\filldraw (0.2, 0.6) circle [radius =0.05];
\draw (-0.4,0.6) node {\small $u_i(\bft^{(2)}_0)$};

\draw[thick] (1.2,0) -- (4,0);

\draw[thick] (0.2,0.6) -- (4,0.6);

\draw[color=red,thick] (0.2,0.6) .. controls  (0.2,0) and (0.5,-0.6) .. (1.5,-0.6);

\draw[color=red,thick] (1.5,-0.6) -- (4,-0.6);

\draw (4.8,0) node {\small$L_j((\bft^{(2)}_0),\phi)$};
\draw (4.8,0.6) node {\small$L_i((\bft^{(2)}_0),\phi)$};

\draw[color=red] (4.2,-0.6) node {\small$L_i'$};

\end{tikzpicture}
\subcaption{\footnotesize Right mutation of $L_i((\bft^{(2)}_0),\phi)$ with respect to $u_j(\bft^{(2)}_0)$.} 
\label{figure-rightmuation}

\end{flushleft}
\end{minipage}%
\begin{minipage}{0.55\textwidth}
  \centering
  
\begin{tikzpicture}[x=30pt,y=30pt]
\draw (0,0) node {\phantom{A}};

\filldraw (1.2, 0) circle [radius =0.05];
\draw (1.17,0.27) node {\small $u_k(\bft^{(2)}_0)$};

\filldraw (0.2, -0.6) circle [radius =0.05];
\draw (-0.4,-0.6) node {\small $u_i(\bft^{(2)}_0)$};

\draw[thick] (1.2,0) -- (4,0);

\draw[thick] (0.2,-0.6) -- (4,-0.6);

\draw[color=red,thick] (0.2,-0.6) .. controls  (0.2,0) and (0.5,0.6) .. (1.5,0.6);

\draw[color=red,thick] (1.5,0.6) -- (4,0.6);

\draw (4.7,0) node {\small$L_k(\bft^{(2)}_0,\phi)$};
\draw (4.7,-0.6) node {\small$L_i(\bft^{(2)}_0,\phi)$};

\draw[color=red] (4.2,0.6) node {\small$L_i''$};

\end{tikzpicture}
\subcaption{\footnotesize Left mutation of $L_i(\bft^{(2)}_0,\phi)$ with respect to $u_k(\bft^{(2)}_0)$.}\label{figure-leftmuation}
\end{minipage}
\vspace{-0.5em}
\caption{Mutations.} \label{fig:1}
\end{figure}
For completeness, we restate the following and includes its proof.
\begin{thm}[Strategy-Theorem]\label{thm:strategy}
    For  a  Fano manifold $X$, assume  the followings. 
    \begin{enumerate}[leftmargin=0pt, itemindent=0.5em, labelsep=0.5em]
    \item The big quantum cohomology of $X$ is convergent near the large radius limit.
    \item There exists $\bft^{(2)}_0\in H^2(X)$ at which $X$ satisfies property Gamma-I.
       \item There exists a domain $U$ inside the $(\mathrm{SR})$-region of $X$ such that  $\bft^{(2)}_0\in U$ and $U\cap K_{ts}\neq \varnothing$.
        
    \item  Denote $V_1:=\cO$. There exists $\bft^{(2)}_1\in U\cap K_{ts}$ such that:
    \begin{enumerate}
    \item there exist   $V_2,\dots,V_{s} \in \mathcal{D}^b_{\rm coh}(X)$ such that each $V_i$ is obtained from $\cO$ via the Galois action along some path $\gamma_i\subset H^2(X)$ starting at the same point  $\bft^{(2)}_1$;
        \item there exists a full exceptional collection $(\tilde{V}_1, \dots, \tilde{V}_s)$ in $\mathcal{D}^b_{\rm coh}(X)$, obtained from    the set $\{V_1, \dots, V_s\}$ by ``elementary operations";
        \item  there exists an admissible phase $\phi_1$ at $\bft^{(2)}_1$ such that $\mathcal{Z}^{K, \mathrm{sm}}(\tilde{V}_i)$ respects the pair $(u_i, \Psi_i)$ at $\bft^{(2)}_1$ with phase $\phi_1$ for all $i$, where the $u_i$ are ordered so that $\Im(\rme^{-\mathbf{i}\phi}u_1(\bft^{(2)}_1))>\dots>\Im(\rme^{-\mathbf{i}\phi}u_{s}(\bft^{(2)}_1))$.
     \end{enumerate}
\end{enumerate}    
    Then Gamma conjecture II holds for $X$. 
\end{thm}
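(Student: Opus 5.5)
The plan is to reduce the Strategy-Theorem to Gamma conjecture II at a single tame semisimple point inside the convergence domain $B$. By \cite[Remark 4.6.3]{GGI16} and \cite[Remark 4.13]{GI19}, the choice of semisimple point and of admissible phase does not matter there. Hypothesis (4.c) already gives the desired statement at $\bft^{(2)}_1$, but only for the small connection $\widetilde{\nabla}^{\rm sm}$, and $\bft^{(2)}_1$ may lie outside $B$. So the real work is transporting the property ``$\mathcal{Z}^{K,\rm sm}(\tilde V_i)$ respects $(u_i,\Psi_i)$'' from $\bft^{(2)}_1$ to a point of $B\cap H^2(X)\cap K_{ts}$.

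Logically, hypotheses (2) and (3) enter only through (4). Global Gamma-I (Theorem \ref{thm--gammaIoverdomain}) shows that $\mathcal{Z}^{K,\rm sm}(\cO)$ is, up to a constant, the flat section attached to the simple rightmost pair at $\bft^{(2)}_1$. The Galois action \eqref{equ-monodromy-invariance} then turns the $V_i$ into flat sections represented by admissible paths at $\bft^{(2)}_1$, which is what makes (4.b)--(4.c) checkable. For the deduction itself I would use only (1) and (4).

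The steps are as follows.
\begin{enumerate}
\item Since $H^2(X)\setminus K_{ts}$ is a divisor in the complex vector space $H^2(X)$, the set $K_{ts}$ is connected, open and dense. By (1), $B\cap H^2(X)$ contains a nonempty open large-radius region. Hence there is a point $\bft^{(2)}_2\in B\cap K_{ts}$ and a path $\rho\subset K_{ts}$ from $\bft^{(2)}_1$ to $\bft^{(2)}_2$.
\item Along $\rho$, continue the $u_i$ and $\Psi_i$ (the latter up to sign) and a phase $\phi(t)$ continuously, starting from $\phi_1$. The AEFS $\{y_i^{\phi(t)}\}$ built from the Laplace transforms of the $\hat y_i$ in Lemma \ref{lemma-holomorphicflatsectionwrtlaplacedualconn} varies continuously as long as $\phi(t)$ stays admissible. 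By \cite[Lemma 2.7]{HK23} it is characterized pointwise, so the respect property propagates.
\item At the finitely many moments where admissibility fails, an eigenvalue $u_j$ crosses a ray $L_i$. There the AEFS changes by a right or left mutation $y_i'=y_i-[y_i,y_j)y_j$, as in Figure \ref{fig:1}. By \eqref{equ-resp pairing}, $[\cdot,\cdot)$ on $\mathcal{S}^{\rm sm}_\bZ$ is $\chi$. So the corresponding change of the $K$-classes is exactly the class of the categorical mutation $R_{\tilde V_j}$ or $L_{\tilde V_j}$, possibly composed with a shift. Performing these mutations (braid group action) on $(\tilde V_1,\dots,\tilde V_s)$ keeps it a full exceptional collection whose images respect the new AEFS.
\item At $\bft^{(2)}_2\in B\cap H^2(X)$ we have $E(\bft^{(2)}_2)=c_1(X)$, so $E\star=\hat c_1(\bft^{(2)}_2)$ has simple spectrum. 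Thus $\bft^{(2)}_2\in B_{ss}$, with canonical coordinates given by the $u_i$. Moreover $\mathcal{Z}^K=\mathcal{Z}^{K,\rm sm}$ there, and an AEFS of the big connection restricts to the small one (Remark \ref{rmk39}). Gamma conjecture II therefore holds at $\bft^{(2)}_2$ for one admissible phase, hence in general.
\end{enumerate}

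The main obstacle is step 3. One has to verify that the wall-crossing of the AEFS matches the categorical mutation, including signs and the ordering by $\Im(\rme^{-\mathbf{i}\phi}u_i)$, and that the sign ambiguity of $\Psi_i$ is absorbed by shifts $[1]$. My first attempt would be to avoid step 3 entirely by rotating $\phi$ and choosing $\rho$ generically, so that only simple crossings occur. Each such crossing would then be handled by the two-eigenvalue computation of \cite[Section 2]{GGI16}.
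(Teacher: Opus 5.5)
Your proposal is correct and follows essentially the same route as the paper. You choose $\bft^{(2)}_2\in B\cap K_{ts}$ (nonempty because $B\cap H^2(X)$ is open and $H^2(X)\setminus K_{ts}$ is a divisor), transport the AEFS from $(\bft^{(2)}_1,\phi_1)$ along a path in $K_{ts}$ with each crossing reduced to a phase rotation past a single eigenvalue, so that it becomes a braid-group mutation of $(\tilde V_1,\dots,\tilde V_s)$ via $[\cdot,\cdot)=\chi$, and conclude since the small and big AEFS agree at $\bft^{(2)}_2\in B\cap K_{ts}\subset B_{ss}$; as you note, and as the paper also remarks, hypotheses (2), (3) and (4.a) are not used in the deduction itself but only to make (4.b)--(4.c) verifiable.
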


\begin{proof}
   Let $B$ be the  domain of convergence of the big quantum cohomology of $X$ and $B_{ss}$ the semisimple locus, as in Section \ref{sec--preliminary}. Observe that $B\cap K_{ts}\subset B_{ss}$. Note that $B\cap H^2(X)\subset H^2(X)$ is a domain, and $H^2(X)\setminus K_{ts}\subset H^2(X)$ is a divisor. Therefore $B\cap K_{ts}\ne\varnothing$, implying $B_{ss}\ne\varnothing$. Let $\bft^{(2)}_2\in B\cap K_{ts}\subset B_{ss}$, and let $\phi_2\in\bR$ be an  admissible phase at $\bft^{(2)}_2$.  \par
   We claim that, as $(\bft^{(2)},\phi)$ varies from $(\bft^{(2)}_1,\phi_1)$ to $(\bft^{(2)}_2,\phi_2)$ inside $K_{ts}$, the corresponding AEFS of $\nabla^{\rm sm}$ changes by mutations, compatibly with the braid group action on full exceptional collections. To see this, let $\bft^{(2)}\in K_{ts}$ be on the path connecting $\bft^{(2)}_1$ and $\bft^{(2)}_2$, and let a neighborhood of $\bft^{(2)}$ be  properly-chosen with respect to $\{(\bar{u}_i,\bar{\Psi}_i)\}$. Let $\phi,\phi'\in\bR$ be admissible phases at $\bft^{(2)}$, and suppose that $(E_1,\dots,E_s)$ is full exceptional collections in $\mathcal{D}^b_{\rm coh}(X)$ that give AEFS of $\nabla^{\rm{sm}}$ at $\bft^{(2)}$ with respect to $\{\bar{\Psi}_i\}$ associated to phases $\phi$.  Suppose $\phi'>\phi$, and without loss of generality assume that they are close enough so that when we rotate $L_i(\bft^{(2)},\phi)$ counterclockwise around its endpoint $\bar{u}_i(\bft^{(2)})$ by the angle $\phi'-\phi$ to get $L_i(\bft^{(2)},\phi')$ for $1\le i\le s$, only the sector spanned by deformation of $L_{i_0}(\bft^{(2)},\phi)$ contains an eigenvalue of $\hat{c}_1(\bft^{(2)})$; in this case, the eigenvalue is $\bar{u}_{i_0-1}(\bft^{(2)})$. Now according to the discussion preceding Figure \ref{fig:1}, the AEFS of  $\nabla^{\rm{sm}}$ at $\bft^{(2)}$ with respect to $\{\bar{\Psi}_i\}$ associated to phases $\phi'$ is
   \begin{align*}
   &\mathcal{Z}^{K,\rm{sm}}(E_1),\dots,\mathcal{Z}^{K,\rm{sm}}(E_{i_0-2}),\\
   &\mathcal{Z}^{K,\rm{sm}}(E_{i_0})-[\mathcal{Z}^{K,\rm{sm}}(E_{i_0-1}),\mathcal{Z}^{K,\rm{sm}}(E_{i_0}))\mathcal{Z}^{K,\rm{sm}}(E_{i_0-1}),\mathcal{Z}^{K,\rm{sm}}(E_{i_0-1}),\\
   &\mathcal{Z}^{K,\rm{sm}}(E_{i_0+1}),\dots,\mathcal{Z}^{K,\rm{sm}}(E_s).
   \end{align*}
This is the AEFS given by the full exceptional collection obtained by applying $\sigma_{i_0-1}^{-1}$ on $(E_1,\dots,E_s)$. Conversely, if $\phi'<\phi$  are close enough so that only the sector corresponding to $L_{i_0}(\bft^{(2)},\phi)$ contains an eigenvalue of $\hat c_1(\bft^{(2)})$, then the rotation is clockwise. Similarly to the previous case, the AEFS mutation is given by applying $\sigma_{i_0+1}$ to the same exceptional collection. This verifies the claim (see \cite[Section 4]{GGI16} for systematic discussions). The proof is completed by observing that AEFS of $\nabla^{\rm{sm}}$ coincides with AEFS of $\nabla$ near $\bft^{(2)}_2\in K_{ts}\cap B$.
\end{proof}

\section{Gamma conjecture II holds for  del Pezzo surfaces}

In this section, we 
investigate   del Pezzo surfaces $X_r$ ($1\le r\le 8$), namely the blow-up of $\mathbb{P}^2$ at $r$ general points.
 It is known that  the big quantum cohomology   converges 
 near the large radius limit by \cite[Corollary 5.9]{Iri07}, and   is generically semisimple by \cite[Theorem 3.6.1]{BM04}. Therefore,   Gamma conjecture II is well defined for $X_r$. 
 The main result of this section is the following theorem, which we obtain  by verifying    all   assumptions of the Strategy-Theorem in Section 4.2. 
 \begin{thm}\label{thm-GammaIIforXr}
    Gamma conjecture II holds for $X_r$ for $1\le r\le 8$.
\end{thm}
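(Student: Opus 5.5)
The plan is to verify the four hypotheses of Theorem \ref{thm:strategy} for $X=X_r$; Theorem \ref{thm-GammaIIforXr} then follows at once. Condition (1) is \cite[Corollary 5.9]{Iri07}, since $H^*(X_r)$ is generated by $H^2$. For condition (2) we take $\bft^{(2)}_0=\mathbf{0}$. The original Gamma conjecture I for del Pezzo surfaces \cite{HKLY21} includes condition (SR) at $\mathbf{0}$ and $\mathcal{Z}^{K,\rm sm}(\cO)\in\mathcal{A}(\mathbf{0})$, so $X_r$ satisfies Gamma-I at $\mathbf{0}$. This holds even for $5\le r\le 8$, where $\mathbf{0}$ is non-semisimple. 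All the remaining work goes into (3) and (4).

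For (4) we use coordinates $\bft^{(2)}=t_H H-\sum_i t_iE_i$ and move to a boundary point where $\Re t_i\to+\infty$, heuristically letting the exceptional divisors become large. In this limit the eigenvalues of $\hat c_1$ separate into three groups. There are three eigenvalues approximating $3\zeta q_H^{1/3}$, with $\zeta^3=1$, coming from $\bP^2$. Each exceptional divisor contributes one eigenvalue close to $-q_{E_i}^{-1}$ (or its analogue), coming from the local $(-1)$-curve contributions. We will make this precise with Puiseux expansions of the characteristic polynomial, and pick $\bft^{(2)}_1$ there with all $s=r+3$ eigenvalues simple.

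The exceptional objects come from Galois actions. Moving $t_H\mapsto t_H-2\pi\mathbf{i}k$ permutes the $\bP^2$-cluster cyclically, which produces $\cO(kH)$. Loops in $t_i$ around the $E_i$-eigenvalue produce $\cO(E_i)$. Elementary operations (differences such as $\cO-\cO(-E_i)=\cO_{E_i}$, together with mutations) should then yield a collection of Orlov type, $(\cO_{E_1},\dots,\cO_{E_r},\cO,\cO(H),\cO(2H))$ up to shifts and twists, which is full. Condition (4.c) is checked by tracking the Laplace-dual paths: each $\mathcal{Z}^{K,\rm sm}(\tilde V_i)$ must be represented by a straight path from its eigenvalue with a common phase $\phi_1$. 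Concretely, we compare the Galois-moved paths with the straight ones, and verify the mutations using the pairings $\chi(\tilde V_i,\tilde V_j)$ together with the ordering of $\Im(\rme^{-\mathbf{i}\phi_1}u_i)$.

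The main obstacle is (3): we need a path from $\mathbf{0}$ to $\bft^{(2)}_1$ that stays inside the (SR)-region. We will use the real path $t_H=0$, $t_i=\tau\ge0$ (perhaps with $t_H$ also tuned), along which all $q^{\bfd}$ are positive reals. In the basis $\mathbf{1},aH-\sum E_i,E_1,\dots,E_r,[pt]$ we plan to show that $\hat c_1(\bft^{(2)})+cI$ is an irreducible matrix with nonnegative entries for a suitable $a>0$ and shift $c$. Perron--Frobenius then gives a simple real eigenvalue of maximal modulus, and after the shift this is a simple rightmost eigenvalue. The entrywise estimates use the low-degree invariants: lines, conics, and exceptional curves, with the known numbers of $(-1)$-curves of each degree. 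The sign control of the off-diagonal entries involving $E_i$ as $\tau\to\infty$ is the delicate part, and this is where we expect the two key observations of Section 5. Finally we will check that the endpoint region meets $K_{ts}$.
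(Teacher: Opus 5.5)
Your plan follows the paper's route. It uses the Strategy-Theorem with $\bft^{(2)}_0=\mathbf{0}$, Perron--Frobenius in the basis $\mathbf{1},aH-\sum_iE_i,E_1,\dots,E_r,[pt]$ along a real path, Galois actions by $\pm H$ and $E_k$, and differences of Galois images. Your ray $-\tau\sum_iE_i$ is in fact the paper's path: the flow $\bft^{(2)}\mapsto\bft^{(2)}-\tau c_1(X_r)$ conjugates $\hat c_1$ to $\rme^{-\tau}\hat c_1$, so it preserves (SR), and it carries your ray exactly onto the segment $q_1=\dots=q_r=1$, $q=\rme^{-3\tau}$.

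However, the link between (3) and (4) is missing. For $r\ge3$ your ray never meets $K_{ts}$. When all $t_i$ agree, $\hat c_1$ commutes with the $\mathfrak{S}_r$-action permuting the $E_i$, and the span of the $E_i-E_j$ is an irreducible summand, so $\hat c_1$ has an eigenvalue of multiplicity at least $r-1$. More importantly, to read off the monodromy of a single $E_k$-loop you need the $r$ exceptional eigenvalues to have pairwise distinct moduli. Their separation must also stay bounded below while the $\bP^2$-cluster shrinks; the paper uses $\bar q_k=1+\frac{k}{r}\delta_r$ with $\bar q\ll\delta_r$ (Proposition~\ref{prop-estimateofeigevalueswhenrotating}). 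Openness of (SR) around your ray only gives a neighbourhood whose width may tend to $0$ as $\tau\to\infty$. So ``checking that the endpoint region meets $K_{ts}$'' does not yield such an endpoint in the (SR)-component of $\mathbf{0}$. You need (SR) on a box $\{0<q\le1,\ 1\le q_i\le 1+\delta\}$ with $\delta$ independent of $q$, which is Proposition~\ref{prop-main-simple rightmost}. Its small-$q$ part is observation~\ref{observation beta}; a Puiseux estimate uniform in $(q_i)$ on a compact box, as in Lemma~\ref{lemma-sharpestimate}, would also suffice.

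The second gap is in (4.b)--(4.c). The collection $(\cO_{E_1},\dots,\cO_{E_r},\cO,\cO(H),\cO(2H))$ is not exceptional, because $\Hom^\bullet(\cO,\cO_{E_i})=H^\bullet(\bP^1,\cO)\neq0$, and no global twist repairs this. Any $\cO_{E_k}$ must come after the pulled-back line bundles. With a small positive phase the exceptional eigenvalues come first. The paper applies the loop $G(E_k)$ to $\cO(-H)$, represented from $\bar u_2$, and then performs one right mutation across $\bar u_{3+k}$. This produces $\cO(E_k-H)-\cO(-H)=\cO_{E_k}(-1)$, i.e.\ $\pm L_{\cO(-H)}\cO(E_k-H)$, placed before $\cO(-H),\cO,\cO(H)$. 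Moreover, the section $y$ at $\bar u_{3+k}$ is pinned down only by the quadratic relation $\mathcal{Z}^{K,\mathrm{sm}}(\cO(-H))=y'-[y',y)\,y$. Here $y'=\pm\mathcal{Z}^{K,\mathrm{sm}}(\cO(E_k-H))$ carries a sign ambiguity from the normalized idempotent. To show $y$ is $\pm\mathcal{Z}^{K,\mathrm{sm}}$ of an exceptional object, you need Lemma~\ref{lem-determine coefficients}, using $\chi(L_{\cO(-H)}\cO(E_k-H),\cO(-H))=-1$, together with the semiorthogonality $[y_2,y_{3+k})=0$. These rule out the wrong sign, which would give $\pm(\cO_{E_k}(-1)+2\cO(-H))$. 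Your ``verify the mutations using $\chi$'' points in this direction, but this is precisely where (4.b) and (4.c) are proved, and the outline does not supply it.
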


 We start with some concrete information on the small quantum  cohomology of $X_r$. Let $H$ denote the pullback to $X_r$ of the hyperplane class on $\mathbb{P}^2$, and let $E_1,...,E_r$ be the exceptional divisors. Let $\mathbf{1}\in H^0(X_r)$ be the Poincar\'e dual of the fundamental class, and $[pt]\in H^4(X_r)$ the Poincar\'e dual of a point. Let $$T_0=\mathbf{1},T_1=H-E_1,\dots,T_r=H-E_r,T_{r+1}=H,T_{r+2}=[pt].$$ Then these classes form a basis of $H^*(X_r)$, and $T_1,\dots,T_{r+1}$ form a nef basis of $H^2(X_r,\bZ)$. We write $\bft=\suml_{i=0}^{r+2}t_iT_i$, and set $q_i:=\rme^{t_i}(1\le i\le r), q:=\rme^{t_{r+1}}$. 

 The dual basis of $\{T_1,\dots,T_{r+1}\}$ in $H_2(X_r,\bZ)$ is given by $\{E_1,\dots,E_r,H-\sum_iE_i\}$. So we can write $\bfd\in\Eff(X_r)$   as  $\bfd=\sum_ik_iE_i+k(H-\sum_iE_i)$, and  denote   
\begin{align}\label{notation-q^d}
\rme^{\bft^{(2)}(\bfd)}=q_1^{k_1}\cdots q_r^{k_r}q^k=:\bfq^{\bfd}, \mbox{  where }k_i,k\in\bZ_{\ge0}.    
\end{align}

For $\bfd\in\Eff(X_r)$ and $k\in\bZ_{\ge0}$, the invariant $\langle([pt])^k\rangle_\bfd^{X_r}$ is a nonnegative integer {\cite[Section 4.1]{GP98}}. Moreover, we will use the following Gromov-Witten invariants (see \cite[Lemma 3.3, Theorem 4.1]{GP98}, \cite[Proposition 3.4.1]{BM04}): 
\begin{align}\label{eq--invariants}
\langle \rangle^{X_r}_{E_i}=\langle \rangle^{X_r}_{H-E_i-E_j}=\langle[pt] \rangle^{X_r}_{H-E_i}=\langle([pt])^2\rangle^{X_r}_H=1, \quad i\neq j.
\end{align}

\subsection{Distribution of the eigenvalues of $\hat{c}_1(\bft^{(2)})$}
In this subsection, we study the distribution of the eigenvalues of $\hat{c}_1(\bft^{(2)})$. We first assume the following proposition, whose proof is the most technically involved  part and is deferred to Section \ref{section-condition*forXr}. It is needed both in the formulation and  proof of Proposition \ref{prop-estimateofeigevalueswhenrotating}, as well as in the proof of  Theorem \ref{thm-GammaIIforXr}. 

\begin{prop}\label{prop-main-simple rightmost} There exists $\delta_r\in(0,{1\over2}]$, such that the  set $$V_{\delta_r}:=\{\bft^{(2)}\in H^2(X_r, \mathbb{R})|0<q(\bft^{(2)})\le1\text{ and }1\le q_i(\bft^{(2)})\le 1+\delta_r, \, i=1,\dots,r\}$$
is contained in the $(\mathrm{SR})$-region of $X_r$. 
\end{prop}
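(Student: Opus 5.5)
We will prove the proposition with the Perron--Frobenius theorem, as the introduction indicates. Fix $\bft^{(2)}\in V_{\delta_r}$, so that all $q_i$ and $q$ are positive reals. We write the matrix of $\hat c_1(\bft^{(2)})$ in a basis of the form
\[
\mathbf{1},\quad aH-\textstyle\sum_iE_i,\quad E_1,\dots,E_r,\quad [pt],
\]
where $a>0$ is a parameter to be chosen (possibly depending on $r$). The aim is to show that, for suitable $a$ and small $\delta_r$, the matrix $\hat c_1(\bft^{(2)})+cI$ is entrywise nonnegative and irreducible (better, primitive) for some real shift $c$. Perron--Frobenius then gives a simple real eigenvalue $\rho$ of maximal modulus, and primitivity makes it strictly dominant in modulus. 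Since $\Re(u+c)\le|u+c|<\rho$ for every other eigenvalue $u$, this yields condition (SR) at $\bft^{(2)}$.

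The first step is to compute the small quantum products $c_1\star T$ for $T$ in this basis, written as power series in $q_1,\dots,q_r,q$ with nonnegative coefficients. Here $c_1=3H-\sum_iE_i$. The degree axiom, the divisor axiom, and the invariants \eqref{eq--invariants} (together with nonnegativity of $\langle [pt]^k\rangle_\bfd$) control these coefficients. The relevant classes are:
\begin{itemize}
\item the $(-1)$-curves $E_i$ and $H-E_i-E_j$, with weights $q_i$ and $q\,q_1\cdots\widehat{q_i}\cdots\widehat{q_j}\cdots q_r$;
\item the conic-type classes $H-E_i$ through a point;
\item the line class $H$ through two points;
\item higher classes, which contribute nonnegative multiples of monomials.
\end{itemize}
The key observation is that in the chosen basis every Gromov--Witten contribution appears with a sign that can be controlled. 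The classes $E_i$ occur with possibly negative classical coefficients. For instance, $E_i\star E_i$ contains $-[pt]$ classically, and $E_i\star\cdot$ picks up terms like $q_i(\ldots)$ with mixed signs. Choosing $q_i\ge1$ near $1$ and $q\le1$ should let the quantum terms $q_i E_i$-type corrections dominate or cancel these negative classical entries. This is where the two observations $(\alpha)$ and $(\beta)$ enter.

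Next we estimate entrywise along the region. Coefficients involving $q$ are bounded by the value at $q=1$, since they are monotone in $q$ with nonnegative coefficients. Coefficients involving the $q_i\in[1,1+\delta_r]$ are close to their values at $q_i=1$, with errors $O(\delta_r)$. We then check nonnegativity of each off-diagonal entry, adding $cI$ to handle the diagonal. For the off-diagonal entries that are only marginally nonnegative at $q_i=1$, the hypothesis $q_i\ge1$ should push them in the right direction. Irreducibility should follow from strictly positive chains $\mathbf{1}\to c_1\to\cdots\to[pt]\to\mathbf{1}$, the last step using quantum terms such as $\langle[pt]^2\rangle_H=1$ and $\langle[pt]\rangle_{H-E_i}=1$. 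Positive diagonal entries after the shift then give primitivity.

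The main obstacle is making all off-diagonal entries nonnegative simultaneously, uniformly for $q\in(0,1]$ and $r$ up to $8$. As $q\to0$ some positive quantum contributions vanish, while the negative classical entries coming from $E_i\cdot E_i=-1$ remain. The first thing to try is to tune $a$, for example $a$ close to $1$ or comparable to $r/3$, so that the entry of $\hat c_1\big(aH-\sum_iE_i\big)$ along $E_j$, and of $\hat c_1(E_j)$ along $E_k$ for $k\neq j$, combine into sums of nonnegative terms once the $q_j E_j$ corrections are included. Where irreducibility degenerates in the limit $q\to0$, we would verify it directly using the $q$-independent $E_i$ contributions.
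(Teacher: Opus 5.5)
Your framework is the paper's, and the deduction of (SR) from a simple dominant Perron root is fine. That framework consists of the basis $[\mathbf{1},aH-\sum_iE_i,E_1,\dots,E_r,[pt]]$, a real shift $cI$, and Perron--Frobenius. The gap is that the step carrying all the weight, nonnegativity of every off-diagonal entry uniformly on $V_{\delta_r}$, is only asserted with ``should''. Moreover, the mechanism you sketch is aimed at the wrong terms.

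For $r/3\le a\le 3$ every classical entry is already nonnegative: they are $3/a$ and $3/a-1$ in the first column, and $3a-r$ and $1$ in the last row. The $E_k$-curves contribute only $-q_k$ on the diagonal and $+q_j$ to the $E_j$-component of $c_1\star(aH-\sum_iE_i)$. So nothing negative survives off the diagonal as $q\to0$. The negative contributions come from $q$-dependent classes. For example, $\bfd=H-E_b-E_j$ contributes $(\frac1a-1)\,q\prod_{i\ne b,j}q_i$ to the $E_j$-component of $c_1\star E_b$, and this is negative because $3a\ge r$ forces $a>1$ once $r\ge4$. Higher-degree classes contribute with factors $(a-3)H(\bfd)+1$ and $H(\bfd)/a-E_j(\bfd)$ whose signs vary from class to class. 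Hence the entries are not series with nonnegative coefficients, and your monotonicity bound in $q$ has no basis.

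Nor can $a$ be found by loose tuning. At $q_i=1$ the degree-one coefficient of the $E_j$-component of $c_1\star(aH-\sum_iE_i)$ is $(a-2)(r-1)(r-2a)/(2a)$. So coefficientwise nonnegativity already confines $a$ between $2$ and $r/2$, which forces $a=2$ when $r=4$. Your suggestion ``$a$ close to $1$'' also violates $3a\ge r$ for $r\ge4$. For $r=8$ the higher degrees shrink the admissible window to $[48/17,17/6]$.

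The missing idea is how the paper gets sign control at $q_i=1$. Group contributing classes by $H$-degree $d$ and value of the invariant. Use the symmetry among the $E_i$ (Lemma~\ref{lemma-symmetrizedsum}) to replace $E_b(\bfd)$ by its average $(3d-1)/r$, resp.\ $(3d-2)/r$ for one-point invariants. Use Cauchy--Schwarz to bound the average of $E_b(\bfd)E_j(\bfd)$ by $((3d-1)/r)^2$. Each degree-$d$ coefficient then equals, or is bounded below by, a multiple of $(ad-3d+1)(rd/a-3d+1)$, $rd/a-3d+1$ or $rd/a-3d+2$. The genus bound on the maximal degrees $d_{r1},d_{r2}$ of contributing classes makes all of these nonnegative for $a=a_r^*=rd_{r1}/(3d_{r1}-1)$. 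Strict positivity is supplied by specific classes such as $E_j$, $H-E_1-E_2$ and $H-E_1$.

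Away from $q_i=1$ the symmetrization is lost, and your additive $O(\delta_r)$ perturbation cannot work uniformly as $q\to0$. Entries such as the $E_j$-component of $c_1\star E_b$ are themselves $O(q)$. Also, ``nonnegative and irreducible'' is not an open condition, so any perturbation argument needs strict positivity, or positivity of $(M+cI)^2$. The paper treats $q\in[\epsilon_r,1]$ by compactness together with openness of $(M+cI)^2>0$. For $q\in(0,\epsilon_r]$ it isolates the leading coefficient in $q$, e.g.\ $(\frac1{a_r^*}-1)\prod_{i\ne b,j}q_i+\frac1{a_r^*}\sum_{k\ne b,j}\prod_{i\ne b,k}q_i$, which is positive near $q_i=1$ because $1<a_r^*<r-1$. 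Your claim that $q_i\ge1$ ``pushes the entries in the right direction'' is unjustified: in this coefficient the negative term grows with the $q_i$ as well.
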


Let $M_r$ be the matrix of the operator $\hat{c}_1(\bft^{(2)})$ on the small quantum cohomology of $X_r$ with respect to the ordered basis $[\mathbf{1},H,E_1,...,E_r,[pt]]$. Then the entries of $M_r$  lie in $\bR[q,q_1,\dots,q_r]$. We study the asymptotic behavior of the spectrum of $\hat{c}_1(\bft^{(2)})$, namely  the roots of  the characteristic polynomial   $\chi_{M_r}(u):=\det(u\cdot I_{r+3}-M_r)$.
  
\begin{exam}\label{example-r=1}
  From the data in \cite[Section 2.2.3]{HKLY21}, the matrices $M_1$ and $M_2$ are respectively given by 
    \begin{equation*}
 \begin{bmatrix}
            0& 2q& 2q&3qq_1\\
            3& 0& 0 &2q\\
            -1& 0& -q_1 & -2q\\
            0& 3& 1& 0
            
        \end{bmatrix}, \,\,\,\,  
 \begin{bmatrix}
            0& 2q(q_1+q_2)& 2qq_2& 2qq_1&3qq_1q_2\\
            3& q& q& q &2q(q_1+q_2)\\
            -1& -q& -q_1-q& -q & -2qq_2\\
            -1& -q& -q& -q_2-q&-2qq_1\\
            0& 3& 1& 1&0
        \end{bmatrix}.
    \end{equation*} 
 
\end{exam}

\begin{lemma}\label{proposition-matrix about q^0 and q^1}
    Let $1\leq r\leq 8$. Then modulo $q^2$, the matrix $M_r$ has the form
    \begin{equation*}\label{equ-matrix form of M_r}
        \begin{bmatrix}
         0& 2q(\sum\limits_{k=1}^r\prod\limits_{i\neq k} q_i) &2q\prod\limits_{i\neq 1} q_i & \cdots & 2q\prod\limits_{i\neq r} q_i  &  3q\prod\limits_{i=1}^r q_i\\
         3 &   q(\sum\limits_{k< l}\prod\limits_{i\neq k, l} q_i)  & q(\sum\limits_{k\neq 1}\prod\limits_{i \neq 1,k}q_i) &\cdots & q(\sum\limits_{k\neq 1}\prod\limits_{i \neq r,k}q_i) &   2q(\sum\limits_{k}\prod _{i\neq k} q_i) \\
         -1& -q(\sum\limits_{k\neq 1}\prod\limits_{i \neq 1,k}q_i)  & -q_1-q(\sum\limits_{k\neq 1}\prod\limits_{i \neq 1,k}q_i)  & \cdots & -q(\prod\limits_{i\neq 1,r}q_i)& -2q(\sum\limits_{k\neq 1}\prod\limits_{i\neq k}q_i) \\
         \cdots & \cdots  & \cdots  & \cdots & \cdots &  \cdots \\
         -1& -q(\sum\limits_{k\neq 1}\prod\limits_{i \neq 1,k}q_i) & -q(\prod\limits_{i\neq r,1}q_i) &\cdots & -q_r-q(\sum\limits_{k\neq r}\prod\limits_{i\neq r,k} q_i) & -2q(\sum\limits_{k\neq r}\prod\limits_{i\neq k}q_i)\\
         0 & 3 & 1 & \cdots &1   & 0
        \end{bmatrix},
    \end{equation*}
    where, for $1\leq j,j_1, j_2\leq r$ with $j_1\neq j_2$, we have

    \[(M_r)_{2+j,2+j}=-q_{j}-q(\sum_{k\neq j}\prod_{i\neq j,k}q_i)+O(q^2),\quad (M_r)_{2+j_1,2+j_2}=-q(\prod_{i\neq j_1,j_2}q_i)+O(q^2).\]  
   Furthermore,  $\det M_r=(-1)^r27\left(\prodl_{i=1}^rq_i\right)^2q+O(q^2)$.
\end{lemma}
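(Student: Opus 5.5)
Because $c_1(X_r)=3H-\sum_iE_i$ and the quantum product is computed from three-point invariants, I will write every entry of $M_r$ as a sum over effective classes $\bfd$ and keep only the terms with $k=0$ or $k=1$ in $\bfd=\sum_ik_iE_i+k(H-\sum_iE_i)$; all others are $O(q^2)$. Put $c_1=3H-\sum E_i$, so $c_1\cdot\bfd=k_1+\cdots+k_r+k$.

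\emph{Degree bookkeeping.} For basis elements $\alpha,\beta$, the coefficient of $\mathbf{q}^{\bfd}$ in $c_1\star\alpha$ is $(c_1\cdot\bfd)\langle\alpha,\beta^\vee\rangle_\bfd$ by the Divisor Axiom. The Degree Axiom forces $\deg\alpha+\deg\beta^\vee=2+2c_1\cdot\bfd$ in complex-degree units, so $c_1\cdot\bfd\in\{1,2,3\}$. The classical part ($\bfd=0$) gives the columns $3,-1,\dots,-1$ for $c_1\cup\mathbf{1}$ and the row $3,1,\dots,1$ for $c_1\cup H$ and $c_1\cup E_j$ in the point coordinate, because $H^2=1$, $E_j^2=-1$ and $H\cdot E_j=0$. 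The remaining task is to list the effective classes with $k\le1$ that carry nonzero invariants.

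\emph{Classes with $k=0$.} Here $\bfd=\sum k_iE_i$, and only multiples of a single $E_i$ carry invariants. For $c_1\cdot\bfd=1$ this leaves $E_i$ with $\langle\rangle_{E_i}=1$, and after applying the divisor axiom twice this produces exactly the $-q_j$ on the diagonal entries $(2+j,2+j)$. Invariants with point insertions vanish for $k=0$, and multiple covers $mE_i$ with $m\ge2$ are excluded by degree.

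\emph{Classes with $k=1$.} These are $H-\sum_i E_i+\sum k_iE_i$. A nonzero invariant needs a line class $H-\sum_{i\in S}E_i$ with $|S|\le2$, giving:
\begin{itemize}
\item $H-E_a-E_b$ with no insertions, weight $q\prod_{i\ne a,b}q_i$;
\item $H-E_a$ with one point insertion, weight $q\prod_{i\ne a}q_i$;
\item $H$ with two point insertions, weight $q\prod_i q_i$.
\end{itemize}
All of these invariants equal $1$ by \eqref{eq--invariants}. The divisor factors $(H-E_a-E_b)\cdot D$ then give each entry. For instance, the $(H,H)$ entry carries the weight $\sum_{a<b}$ because $H\cdot(H-E_a-E_b)=1$. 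The entry $(E_{j_1},E_{j_2})$ picks up only the pair $\{a,b\}=\{j_1,j_2\}$, with sign $-1$ coming from $E_j\cdot(H-E_a-E_b)$ and the dual basis. The point-row and point-column entries come from $H-E_a$ and $H$, with factors $c_1\cdot\bfd=2$ and $3$. I still need to verify that any other class with $k=1$, for example one with some $k_i\ge2$, has vanishing invariants. I expect to do this using a degree count together with the deformation argument of \cite{GP98}: the curve must be an irreducible line, or a reducible union with an exceptional curve, and such unions contribute only at higher order or cancel. This verification, and getting the dual-basis signs right (the dual of $E_j$ is $-E_j$), is the main obstacle.

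\emph{Determinant.} Finally I will expand $\det M_r$ modulo $q^2$. At $q=0$, the first column is $(0,3,-1,\dots,-1,0)^T$ and the last column is $0$, so $\det M_r=O(q)$. The linear coefficient is the expansion along the last column using the constant part of the other columns, plus correction terms. Only the entry $3q\prod q_i$ in position $(1,r+3)$ survives. Its complementary minor is the matrix with rows $2,\dots,r+3$ and columns $1,\dots,r+2$ at $q=0$, and this minor is triangular-like with determinant $3\cdot\prod(-q_j)\cdot\ldots$. Working it out gives $(-1)^r27(\prod q_i)^2q$, and Example \ref{example-r=1} serves as a sanity check.
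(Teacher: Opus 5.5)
Your plan is the paper's proof. The entries come from the divisor axiom applied to the invariants in \eqref{eq--invariants}, and the determinant is extracted using the fact that the last column is $O(q)$. The paper does the determinant with the Leibniz formula: modulo $q^2$ the only surviving permutation is the (even) $3$-cycle $1\mapsto 2\mapsto r+3\mapsto 1$, which gives $(M_r)_{21}(M_r)_{r+3,2}\prod_j(M_r)_{2+j,2+j}(M_r)_{1,r+3}$. This is the same computation as your cofactor expansion. To justify ``only $(1,r+3)$ survives'' (there are no correction terms), note that row $1$ vanishes at $q=0$, so every cofactor $C_{i,r+3}$ with $i\neq 1$ vanishes at $q=0$. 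The complementary minor of $(1,r+3)$ at $q=0$ equals $9\prod_i q_i$, with cofactor sign $(-1)^{r}$.

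The only genuinely open point is the one you flagged, and the ``higher order'' part of your proposed mechanism cannot work. Every class with $k=1$ contributes at order exactly $q$, so the unwanted invariants must vanish outright. No new deformation argument is needed: this is the standard vanishing from \cite{GP98}, which the paper uses implicitly. For $\bfd=dH-\sum_i\alpha_iE_i$ with $d\ge 1$ the invariants vanish if some $\alpha_i<0$, and for $d=0$ only $\bfd=E_i$ contributes. For $k=1$ you have $\alpha_i=1-k_i\le 1$, so only $\alpha_i\in\{0,1\}$ survive. Then $c_1\cdot\bfd=3-\sum_i\alpha_i\ge 1$ leaves exactly $H$, $H-E_a$, $H-E_a-E_b$.

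There are two slips to fix. First, $c_1\cdot\bfd=\sum_ik_i+(3-r)k$, not $\sum_ik_i+k$, since $c_1\cdot(H-\sum_iE_i)=3-r$. Your formula would give $c_1\cdot(H-E_a-E_b)=r-1$ and discard these classes for $r\neq 2$; you escape only because you compute $c_1\cdot\bfd$ directly for the three line classes. Second, in complex degrees the constraint reads $|\alpha|+|\beta^\vee|=1+c_1\cdot\bfd$.

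Finally, carrying out the point column gives $(M_r)_{2+j,r+3}=-2q\prod_{i\neq j}q_i+O(q^2)$, since only $\bfd=H-E_j$ contributes and $E_j^\vee=-E_j$. This agrees with Example \ref{example-r=1} but not with the sum printed in the last column of the displayed matrix. That entry of the display is a misprint, and it does not enter the determinant.
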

\begin{proof}
    The matrix form follows from \eqref{eq--invariants} and the divisor axiom of Gromov-Witten theory. Let $s=r+3$. We have 
    \[
    \det M_r=\suml_{(i_1\dots i_s)}\mathrm{sign}(i_1\dots i_s)\cdot(M_r)_{i_1,1}\cdots(M_r)_{i_s,s}.
    \]
    Note that $q|(M_r)_{i,r+3}$ for all $i$. So for any term labeled by $(i_1\dots i_s)$ contributing to $\det M_r$ modulo $q^2$, we must have $i_2=r+3$, which forces $i_j=j$ for $3\leq j\leq r+2$. Then $i_1=2$ since $q|(M_r)_{11}$, implying that $i_{r+3}=1$. Consequently,
    \begin{align*}
    \det M_r&\equiv(M_r)_{21}(M_r)_{r+3,2}(M_r)_{33}\cdots(M_r)_{r+2,r+2}(M_r)_{1,r+3}\mod q^2\\
    &\equiv 3\cdot 3\cdot(-q_1)\cdots(-q_r)\cdot3q\prodl_{i=1}^rq_i\mod q^2.
    \end{align*}
This completes the proof.   
\end{proof}

To analyze the spectrum of $\hat{c}_1(\bft^{(2)})$ as $q \to 0$, we examine the roots of the characteristic polynomial $\chi_{M_r}(u)$. Since the coefficients of this polynomial depend polynomially on $q$, standard results on algebraic functions imply that its roots admit Puiseux expansions. These are   generalized power series that allow fractional exponents in $q$.\par

The following Propositions \ref{theorem-puiseux series} and \ref{prop-estimateofeigevalueswhenrotating}  determine the leading terms of these root expansions. Finding this lowest-order behavior is a crucial first step: it gives us the analytic control needed to rigorously track the monodromy of the eigenvalues, which is the  condition (3) for applying the Strategy-Theorem, as we move along paths $\gamma_i\subset H^2(X_r)$. Denote 
\begin{equation}
      \omega:=\exp\left({2\pi\mathbf{i}\over 3}\right).  
\end{equation}

\begin{prop}\label{theorem-puiseux series} For fixed $q_1,...,q_r\in\bC^\times$, the Puiseux expansions in $q$ of the eigenvalues  of $M_r$ have the following asymptotic forms as $q\to0$:
\begin{align*}%\label{equ-Puiseux series}
u_k(q;q_1,\dots,q_r)&=3\omega^{k-1}(\prod q_i)^{\frac{1}{3}}q^{\frac{1}{3}}+o(q^{\frac{1}{3}}),&k=1,2,3,\\
        u_{3+k}(q;q_1,\dots,q_r)&=-q_k+o(1),& k=1,\dots,r.
\end{align*}
\end{prop}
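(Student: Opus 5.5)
We will analyze the characteristic polynomial $\chi_{M_r}(u)=\det(uI_{r+3}-M_r)$ as a polynomial in $u$ whose coefficients lie in $\bC[q]$ (with $q_1,\dots,q_r\in\bC^\times$ fixed). The root expansions will be read off via a Newton polygon argument, using the structure of $M_r$ modulo $q^2$ from Lemma \ref{proposition-matrix about q^0 and q^1}.

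\textbf{Step 1: the limit $q=0$.} Set $q=0$ in $M_r$. The only surviving entries are $(M_r)_{21}=3$, $(M_r)_{2+j,1}=-1$, $(M_r)_{2+j,2+j}=-q_j$, $(M_r)_{r+3,2}=3$ and $(M_r)_{r+3,2+j}=1$. This matrix is lower triangular with diagonal $(0,0,-q_1,\dots,-q_r,0)$. Hence
\[
\chi_{M_r}(u)|_{q=0}=u^3\prod_{k=1}^r(u+q_k).
\]
By continuity of roots, $r$ eigenvalues tend to $-q_k$ and three tend to $0$. This already gives $u_{3+k}=-q_k+o(1)$, counted with multiplicity; if some of the $q_k$ coincide, the statement holds as a multiset, which is all that is claimed.

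\textbf{Step 2: the three small roots.} Write $\chi_{M_r}(u)=\sum_{m}a_m(q)u^m$. The key claims are:
\begin{itemize}
\item $a_0(q)=(-1)^{r+3}\det M_r=-27(\prod q_i)^2q+O(q^2)$, using the determinant formula of Lemma \ref{proposition-matrix about q^0 and q^1};
\item $a_3(0)=\prod q_k\neq0$;
\item $a_1(q),a_2(q)=O(q)$.
\end{itemize}
Given these, the Newton polygon of $\chi_{M_r}$ near $u=0$ has a single edge from $(0,1)$ to $(3,0)$, of slope $-1/3$. So the three small roots satisfy $u^3\sim -a_0/a_3\sim 27(\prod q_i)q$, i.e. $u\sim 3\omega^{k-1}(\prod q_i)^{1/3}q^{1/3}$ for $k=1,2,3$.

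To make this rigorous, substitute $u=q^{1/3}v$ and divide by $q$. The result converges, as $q\to0$, to $\prod q_k\,v^3-27(\prod q_i)^2$, which is the claimed leading equation. This substitution uses only $a_1=O(q)$ and $a_2=O(q^{1/3+\epsilon})$, so even $a_2=O(q)$ is more than enough.

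\textbf{Step 3: verify $a_1,a_2=O(q)$ (the main obstacle).} Since $\chi_{M_r}(u)|_{q=0}=u^3\prod(u+q_k)$ exactly, the coefficients of $u^0,u^1,u^2$ vanish at $q=0$. Because they are polynomials in $q$, they are automatically $O(q)$, so this step is in fact immediate.

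The real content is therefore the $q$-linear term of $a_0$, i.e. $\det M_r$ modulo $q^2$, which is supplied by Lemma \ref{proposition-matrix about q^0 and q^1}. One detail needs care: $a_3(0)=\prod q_k$ and the limit polynomial must have the three roots $3\omega^{k-1}(\prod q_i)^{1/3}$. Indeed,
\[
v^3=\frac{27(\prod q_i)^2}{\prod q_i}=27\prod q_i,
\]
which gives exactly the stated cube roots for a fixed branch of $(\prod q_i)^{1/3}$.

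Finally, Puiseux's theorem guarantees that each root is a convergent series in $q^{1/3}$ (or in finer fractional powers for the roots near $-q_k$). The leading terms computed above give the $o(q^{1/3})$ and $o(1)$ error estimates.
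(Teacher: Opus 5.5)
Your argument is correct, and it reaches the result by a different mechanism from the paper's. The paper posits expansions $u_k=C_kq^{w_k}+o(q^{w_k})$ for the three roots tending to $0$ and then works through Vieta's formulas:
\begin{itemize}
\item the product of all roots equals $\det M_r$, which forces $w_1+w_2+w_3=1$;
\item the next two elementary symmetric functions of the roots (up to sign, the coefficients of $u$ and $u^2$ in $\chi_{M_r}$) are polynomials in $q$, so they can only have integer exponents; a short contradiction argument then rules out unequal $w_k$;
\item comparing lowest-order terms gives $C_1+C_2+C_3=0$, $C_1C_2+C_1C_3+C_2C_3=0$ and $C_1C_2C_3=27\prod q_i$.
\end{itemize}
You instead read the single Newton-polygon edge from $(0,1)$ to $(3,0)$ directly off the coefficients $a_0,\dots,a_3$ and rescale $u=q^{1/3}v$. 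This produces the exponent $1/3$ and the limiting cubic $\prod q_k\,v^3-27(\prod q_i)^2$ in one step, with no case analysis on the $w_k$. The inputs are the same in both: the specialization at $q=0$ and $\det M_r \bmod q^2$ from Lemma \ref{proposition-matrix about q^0 and q^1}.

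The paper's route is purely algebraic once Puiseux's theorem is granted. Yours needs a root-continuity step that you should state explicitly. The rescaled polynomial $q^{-1}\chi_{M_r}(q^{1/3}v)$ has leading coefficient $q^{r/3}\to 0$, so apply Hurwitz or Rouch\'e on small discs around the three simple roots of the limiting cubic. Then observe that the three distinct roots so obtained tend to $0$, so they are exactly the three small roots from your Step 1. Here $q^{1/3}$ should be taken on a sector, or you can write $q=s^3$.

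In exchange, your formulation fits what the paper needs next. Convergence of the rescaled polynomial is uniform for $(q_1,\dots,q_r)$ in compact subsets of $(\bC^\times)^r$. The same Rouch\'e argument therefore directly yields the uniform estimate of Lemma \ref{lemma-sharpestimate}, which the paper obtains separately through Lemma \ref{prop- equality prop}; that lemma's proof is in fact exactly your substitution.
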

\begin{proof} 
    By Lemma \ref{proposition-matrix about q^0 and q^1}, we see that $\chi_{M_r}(u)|_{q=0}=u^3\prod(u+q_i)$.
    Hence the Puiseux expansions in $q$ of the eigenvalues of $M_r$ have the following asymptotic forms as $q\to0$:
\begin{align*}
    u_k(q)&=C_kq^{w_k}+o(q^{w_k}),\quad k=1,2,3,(w_1\ge w_2\ge w_3>0,C_k\text{ is independent of }q)\\
        u_{3+k}(q)&=-q_k+o(1),\quad k=1,\dots,r.
\end{align*}
Therefore, in the Puiseux expansion of the product $\prod_{k=1}^{3+r}u_k(q)$, the lowest-order exponent of $q$ is $w_1+w_2+w_3$. Since $\prod_{k=1}^{3+r}u_k(q)=\det M_r$, Lemma \ref{proposition-matrix about q^0 and q^1} implies that
  \begin{align}\label{eq-w1w2w3}
   w_1+w_2+w_3=1.
  \end{align}
Assume $w_1>w_2$. As a Puiseux expansion in $q$, the lowest-order exponent of $\sum_{i=1}^{r+3}\prod_{k\neq i}u_k(q)$ is $w_2+w_3$. Note that $\sum_{i=1}^{r+3}\prod_{k\neq i}u_k(q)$ can be given by minors of $M_r$, and hence is a polynomial in $q$. So $w_2+w_3\geq 1$, contradicting  \eqref{eq-w1w2w3}. Thus $w_1=w_2$. Similarly, by considering $\sum_{i<j}\prod_{k\ne i,j}u_k(q)$, we see $w_1=w_2=w_3=\frac{1}{3}$.

       Consider the Puiseux expansions of the three terms $\sum_{i<j}\prod_{k\ne i,j}u_k(q)$, $\sum_{i=1}^{r+3}\prod_{k\neq i}u_i(q)$, $\prod_{k=1}^{3+r}u_k(q)$ in $q$. Note that these terms can be given by minors of $M_r$, and hence are polynomials in $q$. The powers of their lowest degree terms in $q$ then respectively give
         \begin{equation*}    
            C_1+C_2+C_3=0, \quad C_1 C_2+C_2 C_3+C_1C_3=0, \quad  C_1C_2C_3=27\prod q_i.
     \end{equation*}
 
    Hence $C_1, C_2, C_3$ are the three distinct roots of the cubic polynomial $C^3=27\prod q_i$. This proves the proposition.
\end{proof}

\begin{lemma}\label{prop- equality prop}
    There exists $\delta_{r*}>0$, such that whenever $|q|\in (0,\delta_{r*})$, $|q_i|\in[1,1+\delta_r]$, $\theta\in\bR$ and $1\leq k\leq 3$, the complex number $u^*_{k}:=3\omega^{k-1}q^{\frac{1}{3}}(\prod q_i)^{\frac{1}{3}}+{1\over 100}e^{\mathbf{i}\theta}q^{\frac{1}{3}}$ is not an eigenvalue of $M_r$.
\end{lemma}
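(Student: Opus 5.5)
Write $p:=\prod q_i$ and fix a cube root $p^{1/3}$; also fix $q^{1/3}$. The plan is to rescale the characteristic polynomial by $u=q^{1/3}v$ and show that, uniformly in the compact parameter set $|q_i|\in[1,1+\delta_r]$, the rescaled polynomial's small roots converge to the three roots $3\omega^{k-1}p^{1/3}$ of $v^3=27p$. These roots are separated by $3\sqrt3|p|^{1/3}\ge 3\sqrt3>1/50$. The $r$ large roots stay near $-q_k$. None of these limiting roots lies on the circle $|v-3\omega^{k-1}p^{1/3}|=1/100$.

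\textbf{Step 1: expansion of $\chi_{M_r}$.} Lemma~\ref{proposition-matrix about q^0 and q^1} shows that every entry of $M_r$ in the last column and in the upper block (rows $1,2$ beyond column $1$) is $O(q)$. Write
$$\chi_{M_r}(u)=\sum_{j=0}^{r+3}a_j(q,q_i)\,u^j,$$
where the $a_j$ are polynomials in $q,q_1,\dots,q_r$. The argument of Proposition~\ref{theorem-puiseux series} gives:
\begin{itemize}
\item $a_j|_{q=0}=e_{j-3}$-type coefficients of $u^3\prod(u+q_i)$;
\item $a_0=(-1)^{r+3}\det M_r=-27p^2q+O(q^2)$ up to sign;
\item $q\mid a_1$ and $q\mid a_2$.
\end{itemize}
Concretely, we have $a_1,a_2=O(q)$ with polynomial remainders. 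Since $(q,q_i)$ ranges over a compact set, the implied constants are uniform. Substituting $u=q^{1/3}v$ gives
$$\chi_{M_r}(q^{1/3}v)=q\bigl[(v^3-27p)\,p+R(q^{1/3},v,q_i)\bigr],$$
where $R\to0$ uniformly as $q\to0$ for $|v|\le C$. The terms to track are the following.
\begin{itemize}
\item $a_3u^3$: here $a_3=p+O(q)$.
\item $a_2u^2$ and $a_1u$: these are $O(q\cdot q^{2/3})$ and $O(q\cdot q^{1/3})$.
\item $a_ju^j$ for $j\ge4$: these are $O(q^{4/3})$.
\end{itemize}
I need to double-check the normalisation constant $-27p$ against the determinant sign, consistent with Proposition~\ref{theorem-puiseux series}.

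\textbf{Step 2: conclude.} On the circle $v=3\omega^{k-1}p^{1/3}+\tfrac1{100}e^{\mathbf i\theta}$ we have
$$|v^3-27p|=\prod_{l}\bigl|v-3\omega^{l-1}p^{1/3}\bigr|\ge \tfrac1{100}\bigl(3\sqrt3-\tfrac1{100}\bigr)^2,$$
which is a positive constant independent of $\theta$ and $q_i$. Also $|p|\ge1$. So for $|q|<\delta_{r*}$ small enough, the uniform bound on $R$ gives $\chi_{M_r}(u^*_k)\ne0$.

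\textbf{Main obstacle.} The main obstacle is making the error term $R$ genuinely uniform in $q_i$ and $\theta$, and verifying $q\mid a_1,a_2$ rigorously from the matrix shape. I would do this by the same permutation-expansion argument as in Lemma~\ref{proposition-matrix about q^0 and q^1}. Every permutation contributing to a $1\times1$ or $2\times2$ complementary-minor term must use the column-$(r+3)$ entry or a row-$1$ entry beyond column $1$. Both are divisible by $q$. The remainder is then bounded by polynomial bounds on the compact set.
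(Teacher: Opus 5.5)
Your proposal is correct and follows essentially the paper's route: you substitute $u=q^{1/3}v$, isolate the leading term $q\,p\,(v^3-27p)$ with a remainder $O(|q|^{4/3})$ uniform in $q_i,\theta$, and bound the leading term away from zero. The divisibility $q\mid a_0,a_1,a_2$ that you flag as the main obstacle is immediate from $\chi_{M_r}|_{q=0}=u^3\prod(u+q_i)$ and needs no permutation expansion; the only other difference is cosmetic, since you bound $|v^3-27p|$ below as a product of distances to the three cube roots, whereas the paper expands $(3\omega^{k-1}p^{1/3}+\tfrac{1}{100}e^{\mathbf{i}\theta})^3-27p$ binomially and compares the norms of the three terms.
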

\begin{proof}
    Recall that the coefficients of  $\chi_{M_r}(u)$ of $M_r$ are in $\bR[q,q_1,\cdots,q_r]$, and $\chi_{M_r}(u)|_{q=0}=u^3\prod(u+q_i)$. So we can write $\chi_{M_r}(u)$ in the following form:
    \begin{align*}
        u^{r+3}+\cdots+( \prod q_i+q\cdot a_{3}) u^3+q\cdot a_2u^2+q\cdot a_1u+ (-1)\cdot 27 \cdot(\prod q_i)^2q+q^2a_0,
    \end{align*}
where $a_i\in\bR[q,q_1,\dots,q_r]$.    Note that $u_k^*=q^{\frac{1}{3}}(3\omega^{k-1}(\prod q_i)^{\frac{1}{3}}+{1\over 100}e^{\mathbf{i}\theta})$, and we see that $\chi_{M_r}(u^*_{k})$ has the following form:
    \begin{align}\label{eq-formofchiu*}
        &q\left[\left((3\omega^{k-1}(\prod q_i)^{\frac{1}{3}}+{1\over 100}e^{\mathbf{i}\theta}\right)^3\prod q_i+ (-1)\cdot 27 \cdot(\prodl q_i)^2\right] +q^{\frac{4}{3}}Q_k, 
          \end{align}
   where $Q_k=Q_k(q^{\frac{1}{3}},q_1^{\frac{1}{3}},\dots,q_r^{\frac{1}{3}},e^{\mathbf{i}\theta})$ is a polynomial in $q^{\frac{1}{3}},q_1^{\frac{1}{3}},\dots,q_r^{\frac{1}{3}},e^{\mathbf{i}\theta}$. Then, there exists $\delta_{r*}>0$ so that when $|q|<\delta_{r*}$, $|q_i|\in[1,1+\delta_r]$ and $\theta\in\bR$, we have \[|q^{\frac{1}{3}}Q_k(q^{\frac{1}{3}},q_1^{\frac{1}{3}},\dots,q_r^{\frac{1}{3}},e^{\mathbf{i}\theta})|<{1\over 100}.\]  \par
   
   Now we argue by contradiction and assume $\chi_{M_r}(u^*_k)=0$. Then the above form \eqref{eq-formofchiu*} gives
   \begin{align*}
       \left|\left(3\omega^{k-1}(\prod q_i)^{\frac{1}{3}}+{e^{\mathbf{i}\theta}\over 100}\right)^3-  27\prod q_i\right|<\frac{1}{|\prod q_i|}\cdot{1\over 100}<{1\over 100},
   \end{align*}
   implying that
   \begin{align*}
       \left|3\cdot 3^2\omega^{2k-2}(\prod q_i)^{\frac{2}{3}}e^{\mathbf{i}\theta}+3\cdot 3\omega^{k-1}(\prod q_i)^{\frac{1}{3}}{e^{\mathbf{i}2\theta}\over 100}+{e^{\mathbf{i}3\theta}\over 100^2}\right|<1.
   \end{align*}
   Comparing the norms of the three terms, we see that the inequality is impossible. This contradiction proves that $\chi_{M_r}(u^*_k)\ne0$.
\end{proof}

\begin{lemma}\label{lemma-sharpestimate}
    Let $\delta_{r*}$ be as in the preceding lemma, and $\delta_r\in(0,{1\over 2}]$ as in Proposition \ref{prop-main-simple rightmost}. When $|q|\in (0,\delta_{r*}), |q_i|\in[1,1+\delta_r]$, we have
\begin{align*}
    |u_k(q;q_1,\dots,q_r)-3\omega^{k-1}q^{\frac{1}{3}}(\prod q_i)^{\frac{1}{3}}|<{1\over100}|q^{\frac{1}{3}}|,\quad k=1,2,3.
\end{align*}
\end{lemma}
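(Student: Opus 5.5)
The plan is a continuity argument in the parameters, using Lemma \ref{prop- equality prop} as a barrier that eigenvalues can never cross. Fix $k\in\{1,2,3\}$. Consider the parameter space
\[
P:=\{(q,q_1,\dots,q_r): |q|\in(0,\delta_{r*}),\ |q_i|\in[1,1+\delta_r]\}.
\]
Choose a continuous branch of $q^{\frac13}(\prod q_i)^{\frac13}$ along paths in $P$, compatible with how the $u_k$ are labelled via Proposition \ref{theorem-puiseux series}. Set $c_k:=3\omega^{k-1}q^{\frac13}(\prod q_i)^{\frac13}$ and let $D_k$ be the open disk of radius $\frac1{100}|q^{\frac13}|$ about $c_k$. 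By Lemma \ref{prop- equality prop}, the circle $\partial D_k$, which is exactly the set of points $u^*_k$ as $\theta$ ranges over $\bR$, contains no eigenvalue of $M_r$ for any parameter in $P$.

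We then count the eigenvalues inside $D_k$, with multiplicity, by the argument principle:
\[
N_k:=\frac{1}{2\pi\mathbf{i}}\oint_{\partial D_k}\frac{\chi_{M_r}'(u)}{\chi_{M_r}(u)}\,\mathrm{d}u.
\]
Since $\chi_{M_r}$ has no zeros on $\partial D_k$ and all data vary continuously, $N_k$ is a continuous integer-valued function, hence locally constant. To make it constant on all of $P$, work on the connected cover given by the coordinates $(q^{\frac13},q_1^{\frac13},\dots,q_r^{\frac13})$, on which the disks move continuously; the domain is a product of a punctured disk and annuli, so it is connected.

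Next compute $N_k$ in the limit $q\to0$ with the $q_i$ fixed. Proposition \ref{theorem-puiseux series} gives $u_k=c_k+o(q^{\frac13})$, so for $|q|$ small enough $u_k$ lies in $D_k$. The other two small eigenvalues lie near $c_{k'}$ with $k'\neq k$, and $|c_k-c_{k'}|=3\sqrt3|q^{\frac13}||\prod q_i|^{\frac13}$, so they are outside $D_k$. The large eigenvalues $-q_j+o(1)$ are outside as well, because $|q_j|\ge1$ while $D_k$ shrinks to $0$. Hence $N_k=1$ near $q=0$, and by constancy $N_k=1$ on all of $P$.

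Finally, identify the unique eigenvalue in $D_k$ with $u_k$. Define $u_k$ on $P$ as the unique root of $\chi_{M_r}$ in $D_k$. This is a continuous function, it agrees with the Puiseux branch near $q=0$, and it extends that branch along any path since it never leaves $D_k$. This gives the claimed inequality $|u_k-c_k|<\frac1{100}|q^{\frac13}|$.

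The main obstacle is bookkeeping rather than analysis: labelling the branches $u_k$ consistently as the $q_i$ and $\arg q$ vary, because monodromy of $(\prod q_i)^{\frac13}q^{\frac13}$ permutes the three disks. I would handle this by stating the lemma on the cube-root cover, where the labelling is global and matches Proposition \ref{theorem-puiseux series}, and noting that the three disks are pairwise disjoint, so the set of three small eigenvalues is in any case distributed one per disk.
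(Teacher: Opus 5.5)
Your proof is correct. It uses the same two inputs as the paper: the barrier of Lemma \ref{prop- equality prop} (no eigenvalue on the circle $\partial D_k$) and the small-$q$ asymptotics of Proposition \ref{theorem-puiseux series}. The difference is in how they are combined.

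The paper fixes $q_{10},\dots,q_{r0}$ and argues by contradiction. If $u_{k_0}$ were outside the disk at some $q_0$, then, since it is inside for $|q|$ small, the intermediate value theorem applied to $|u_{k_0}(q)-3\omega^{k_0-1}q^{1/3}(\prod q_{i0})^{1/3}|-\frac{1}{100}|q|^{1/3}$ along a path in $q$ gives a point on the circle. This is shorter. However, it leaves implicit how $u_{k_0}(\,\cdot\,;q_{10},\dots,q_{r0})$ is defined as a continuous function of $q$ away from $q=0$, so that it carries the Puiseux labelling out to $q_0$ (for instance, by continuation along a path).

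Your argument-principle count on the connected cube-root cover needs no choice of branches. It shows that each $D_k$ contains exactly one eigenvalue, counted with multiplicity, at every parameter. You can then define $u_k$ as that root; it depends continuously on the parameters, for example via $\frac{1}{2\pi\mathbf{i}}\oint_{\partial D_k}u\,\chi_{M_r}'(u)\chi_{M_r}(u)^{-1}\,\mathrm{d}u$. This also gives directly that $u_1,u_2,u_3$ are simple and well separated, which is what Propositions \ref{prop-estimateofeigevalueswhenrotating} and \ref{prop: assumption2} use afterwards.

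One point to state explicitly: Lemma \ref{prop- equality prop} holds for every choice of the cube roots. Its proof only uses $\bigl(3\omega^{k-1}(\prod q_i)^{1/3}\bigr)^3=27\prod q_i$, and this is what lets you apply it on the cover.
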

\begin{proof}

We argue by contradiction. Assume that there exists $k_0\in\{1,2,3\}$ and $(q_0,q_{10},\dots,q_{r0})\in(\bC^\times)^{r+1}$ with $|q_0|\in(0,\delta_{r*}),|q_{i0}|\in[1,1+\delta_r]$, such that
\begin{equation*}
|u_{k_0}(q_0;q_{10},\dots,q_{r0})-3\omega^{k_0-1}q_0^{\frac{1}{3}}(\prod q_{i0})^{\frac{1}{3}}|\ge {1\over100}|q_0^{\frac{1}{3}}|.
\end{equation*}
For these fixed $q_{i0}$, it follows from Proposition \ref{theorem-puiseux series} that there exists $\delta_{r0}\in(0,|q_0|)$ such that when $|q'|\in (0,\delta_{r0})$, we have
\begin{equation*}
    |u_{k_0}(q';q_{10},\dots,q_{r0})-3\omega^{k_0-1}(q')^{\frac{1}{3}}(\prod q_{i0})^{\frac{1}{3}}|< {1\over100}|q'|^{\frac{1}{3}}.
\end{equation*}
By   continuity of the function $|u_{k_0}(q;q_{10},\dots,q_{r0})-3\omega^{k_0-1}q^{\frac{1}{3}}(\prod q_{i0})^{\frac{1}{3}}|-{1\over100}|q^{\frac{1}{3}}|$ in $q$,
we see that there exists $q'_0\in\bC^\times$ with $|q'_{0}|\in[\delta_{r0},|q_0|)\subset (0,\delta_{r*})$ such that 
\begin{equation*}
    |u_{k_0}(q'_{0};q_{10},\dots,q_{r0})-3\omega^{k_0-1}(q'_{0})^{\frac{1}{3}}(\prod q_{i0})^{\frac{1}{3}}|={1\over100}|q'_{0}|^{\frac{1}{3}},
\end{equation*}
contradicting Lemma \ref{prop- equality prop}. This proves the lemma.
\end{proof}

Let $\delta_r\in(0,{1\over 2}]$ be as in Proposition \ref{prop-main-simple rightmost}, and set $\bar{q}_i:=1+\frac{i}{r}\delta_r(1\le i\le r)$.

\begin{prop}\label{prop-estimateofeigevalueswhenrotating}
    There exists $\bar\delta_r\in(0,\delta_r)$, such that for any $q\in(0,\bar\delta_r)$ and $t\in[0,1]$:
    \begin{enumerate}
        \item the following inequalities hold:
        \begin{align*}
            |u_k(\rme^{2\pi\mathbf{i}t}q;\bar{q}_1,\dots,\bar{q}_r)-3\omega^{k-1}(\prod\bar{q}_i)^{\frac{1}{3}}(\rme^{2\pi\mathbf{i}t}q)^{\frac{1}{3}}|&<{1\over 100}|q|^{1\over 3},\quad k=1,2,3,\\
            |u_{3+k}(q;\bar{q}_1,\dots,\bar{q}_r)+\bar{q}_k|&<{1\over 100}\delta_r,\quad 1\le k\le r;
        \end{align*}
        \item for $k_0\in\{1,\dots,r\}$, the following inequalities hold:
        \begin{align*}
            |u_k(\rme^{-2\pi\mathbf{i}t}q;\bar{q}_1,\dots,\rme^{2\pi\mathbf{i}t}\bar{q}_{k_0},\dots,\bar{q}_r)-3\omega^{k-1}(\prod\bar{q}_i)^{\frac{1}{3}}q^{\frac{1}{3}}|&<{1\over 100}|q|^{1\over 3},\quad k=1,2,3,\\
            |u_{3+k}(\rme^{-2\pi\mathbf{i}t}q;\bar{q}_1,\dots,\rme^{2\pi\mathbf{i}t}\bar{q}_{k_0},\dots,\bar{q}_r)+\bar{q}_k|&<{1\over 100}\delta_r,\quad 1\le k\le r, k\ne k_0,\\
            |u_{3+k_0}(\rme^{-2\pi\mathbf{i}t}q;\bar{q}_1,\dots,\rme^{2\pi\mathbf{i}t}\bar{q}_{k_0},\dots,\bar{q}_r)+\rme^{2\pi\mathbf{i}t}\bar{q}_{k_0}|&<{1\over 100}\delta_r.
        \end{align*}
    \end{enumerate}
\end{prop}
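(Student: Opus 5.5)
The plan is to treat the two groups of eigenvalues separately. The three ``small'' eigenvalues $u_1,u_2,u_3$ will be controlled by Lemma \ref{lemma-sharpestimate}. The $r$ eigenvalues $u_{3+k}$ near $-\bar q_k$ will be controlled by a uniform perturbation argument at $q=0$, based on the factorization $\chi_{M_r}(u)|_{q=0}=u^3\prod(u+q_i)$.

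\emph{The three small eigenvalues.} In both situations the parameters satisfy the hypotheses of Lemma \ref{lemma-sharpestimate}, provided $\bar\delta_r\le\delta_{r*}$. In (1) the modulus of $\rme^{2\pi\mathbf{i}t}q$ is $q<\delta_{r*}$, and $|\bar q_i|\in[1,1+\delta_r]$. In (2) the modulus of $\rme^{-2\pi\mathbf{i}t}q$ is again $q$, and $|\rme^{2\pi\mathbf{i}t}\bar q_{k_0}|=\bar q_{k_0}\in[1,1+\delta_r]$. So Lemma \ref{lemma-sharpestimate} gives the $1/100$ bounds directly, with one caveat about branches of the cube roots. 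In (2) the product $(\rme^{-2\pi\mathbf{i}t}q)(\rme^{2\pi\mathbf{i}t}\bar q_{k_0})\prod_{i\ne k_0}\bar q_i=q\prod\bar q_i$ is constant, so the branch of $q^{1/3}(\prod q_i)^{1/3}$ in Lemma \ref{lemma-sharpestimate} can be taken continuously equal to $q^{1/3}(\prod\bar q_i)^{1/3}$. In (1) we use the continuous branch $(\rme^{2\pi\mathbf{i}t}q)^{1/3}=\rme^{2\pi\mathbf{i}t/3}q^{1/3}$. Because the three discs of radius $|q|^{1/3}/100$ around the points $3\omega^{k-1}(\cdots)^{1/3}$ are pairwise disjoint, and at most one small eigenvalue lies in each by Lemma \ref{lemma-sharpestimate}, the labelling $u_k$ is defined by continuation in $t$. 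The estimate is then exactly Lemma \ref{lemma-sharpestimate} applied with this branch.

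\emph{The large eigenvalues.} Consider the compact parameter set $K=\{(q;q_1,\dots,q_r): |q|\le\bar\delta_r,\ |q_i|\in[1,1+\delta_r]\}$. The $q_i$ that occur are $\bar q_i$ or $\rme^{2\pi\mathbf{i}t}\bar q_{k_0}$. The numbers $-\bar q_k$ are pairwise separated by $\delta_r/r$, and each rotated value $-\rme^{2\pi\mathbf{i}t}\bar q_{k_0}$ has modulus $\bar q_{k_0}$. Its distance from the other $-\bar q_k$ is therefore at least $|\bar q_{k_0}-\bar q_k|\ge\delta_r/r$, and its distance from $0$ is at least $1$. Put $\rho=\delta_r/(100r)<\delta_r/100$. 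On the circle $|u+c|=\rho$ around each root $c$ of $\chi|_{q=0}$ other than $0$, the polynomial $\chi|_{q=0}$ is bounded below uniformly in $t$ by a positive constant. The coefficients of $\chi_{M_r}$ are polynomials in $q,q_i$ with $\chi_{M_r}-\chi_{M_r}|_{q=0}=O(q)$ uniformly on $K$. Rouché's theorem then yields $\bar\delta_r\in(0,\min(\delta_r,\delta_{r*}))$ such that each such disc contains exactly one eigenvalue for all $t\in[0,1]$. Continuity in $t$ identifies this eigenvalue with $u_{3+k}$, giving the stated estimates, which are even stronger since $\rho<\delta_r/100$.

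\emph{Main obstacle.} The delicate point is the bookkeeping of labels and branches along the loop: making sure that ``$u_k$'' denotes the continuously continued eigenvalue, and that the Lemma \ref{lemma-sharpestimate} bound holds relative to the correct continuously varying cube root rather than some other branch. The disjointness of the discs settles this, since an eigenvalue can never jump between discs. The Rouché step is routine once the uniform separation $\delta_r/r$ and the uniform lower bound on $|\chi|_{q=0}|$ are established.
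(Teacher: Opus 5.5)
Your proposal is correct and follows essentially the paper's route. The three small eigenvalues come from Lemma \ref{lemma-sharpestimate}; your observation that $q\prod q_i$ is constant along the loop in (2), and your continuous choice of cube root in (1), make explicit the branch bookkeeping the paper leaves implicit. The $u_{3+k}$ are handled by perturbing from $\chi_{M_r}|_{q=0}=u^3\prod(u+q_i)$, where the only difference is that you apply Rouch\'e with the uniform separation $\delta_r/r$ to get a bound uniform in $t$ at once, instead of using continuous dependence of roots pointwise in $t$ plus compactness of $[0,1]$ as the paper does.
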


 \begin{proof} 
   Part (1) follows immediately from Lemma \ref{lemma-sharpestimate} together with Proposition \ref{theorem-puiseux series}. 
    
    For case (2),  we have $u_{3+k_0}(0;\bar{q}_1,\dots,\rme^{2\pi\mathbf{i}t}\bar{q}_{k_0},\dots,\bar{q}_r)=-\rme^{2\pi\mathbf{i}t}\bar{q}_{k_0}$, and \[u_{3+k}(0;\bar{q}_1,\dots,\rme^{2\pi\mathbf{i}t}\bar{q}_{k_0},\dots,\bar{q}_r)=-\bar{q}_k\quad (1\le k\le r, k\ne k_0).
    \]
    Recall that entries of $M_r$ are in $\bR[q,q_1,\dots,q_r]$. So by the continuous dependence of roots on coefficients, for each $t\in[0,1]$, there exists $\delta_r(t)\in(0,{1\over 10000}\delta_r)$ such that, when $|q'|<\delta_r(t)$ and $t'\in[0,1]\cap(t-\delta_r(t),t+\delta_r(t))$, we have $|u_{3+k_0}(q';\bar{q}_1,\dots,\rme^{2\pi\mathbf{i}t'}\bar{q}_{k_0},\dots,\bar{q}_r)+\rme^{2\pi\mathbf{i}t}\bar{q}_{k_0}|<{1\over 10000}\delta_r$ and 
    \[
    |u_{3+k}(q';\bar{q}_1,\dots,\rme^{2\pi\mathbf{i}t}\bar{q}_{k_0},\dots,\bar{q}_r)+\bar{q}_k|<{1\over 100}\delta_r\quad (1\le k\le r, k\ne k_0),
    \]
    which implies  
    \begin{align*}
    &|u_{3+k_0}(q';\bar{q}_1,\dots,\rme^{2\pi\mathbf{i}t'}\bar{q}_{k_0},\dots,\bar{q}_r)+e^{2\pi\mathbf{i}t'}\bar{q}_{k_0}|\\\le{}& |u_{3+k_0}(q';\bar{q}_1,\dots,\rme^{2\pi\mathbf{i}t'}\bar{q}_{k_0},\dots,\bar{q}_r)+e^{2\pi\mathbf{i}t}\bar{q}_{k_0}|+|e^{2\pi\mathbf{i}t'}-e^{2\pi\mathbf{i}t}|\cdot|\bar{q}_{k_0}|
    <{1\over 100}\delta_r.
    \end{align*}
    So by compactness of $[0,1]$, we can find $\delta'_r\in(0,{1\over 10000}\delta_r)$ such that when $|q'|<\delta'_r$ and $t\in[0,1]$, we have $|u_{3+k_0}(q';\bar{q}_1,\dots,\rme^{2\pi\mathbf{i}t}\bar{q}_{k_0},\dots,\bar{q}_r)+\rme^{2\pi\mathbf{i}t}\bar{q}_{k_0}|<{1\over 100}\delta_r$ and 
    \[
    |u_{3+k}(q';\bar{q}_1,\dots,\rme^{2\pi\mathbf{i}t}\bar{q}_{k_0},\dots,\bar{q}_r)+\bar{q}_k|<{1\over 100}\delta_r\quad (1\le k\le r, k\ne k_0).
    \]
    Together with Lemma \ref{lemma-sharpestimate}, this proves case (2).
\end{proof}

 \subsection{Gamma conjecture II for $X_r$}
 We prove Theorem \ref{thm-GammaIIforXr} by verifying all assumptions of the Strategy-Theorem. 

Note that $X_r$ is a Fano manifold and $H^*(X_r)$ is generated by $H^2(X_r)$. So the big quantum cohomology of $X_r$ converges near the large radius limit by \cite[Corollary 5.9]{Iri07}.

 It was shown in \cite[Theorem 1.2]{HKLY21} that the original Gamma conjecture I of Galkin, Golyshev and Iritani holds for del Pezzo surfaces $X_r$. Hence, in the terminology of the present paper, we have  
\begin{prop}\label{prop: assumption1}
     $X_r$ satisfies property Gamma-I at $\mathbf{t}^{(2)}_0=\mathbf{0}\in H^2(X_r)$.
\end{prop}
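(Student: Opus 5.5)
This is essentially a translation of \cite[Theorem 1.2]{HKLY21} into the language of Gamma-I, so the proof consists of matching definitions. Recall that the original Gamma conjecture I \cite[Conjecture 3.4.3]{GGI16} for a Fano manifold $X$ has two parts. The first is Conjecture $\mathcal{O}$ at $\bft^{(2)}=\mathbf{0}$, i.e.\ $q_i=q=1$. Its relevant consequence is that the spectral radius $T$ of $\hat c_1(\mathbf{0})$ is a simple eigenvalue, and that every other eigenvalue $u'$ satisfies $|u'|<T$ (or at least $|u'|\le T$ with the simplicity statement). The second part asserts that the flat section $\mathcal{Z}^{K,\rm sm}(\cO)$ lies in the space $\mathcal{A}(\mathbf{0})$ of flat sections with the smallest asymptotics $\rme^{-T/z}$ as $z\to+0$.

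\textbf{Step 1: condition $(\mathrm{SR})$ at $\mathbf{0}$.} From the results of \cite{HKLY21} (proved there via a Perron--Frobenius argument on $M_r|_{\mathbf{q}=\mathbf{1}}$), $T$ is real, positive and simple. Any other eigenvalue $u'$ satisfies $\Re u'\le|u'|$. I would extract $|u'|<T$ from the strict part of Conjecture $\mathcal{O}$ in \cite{HKLY21}. If only $|u'|\le T$ is available, note that equality $\Re u'=T$ together with $|u'|\le T$ forces $u'=T$, which contradicts simplicity. Either way $\Re u'<T$, so $T=u^{\rm SR}(\mathbf{0})$ is a simple rightmost eigenvalue, and $(\mathrm{SR})$ holds at $\mathbf{0}$.

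\textbf{Step 2: identifying the asymptotic subspaces.} In \cite{GGI16}, $\mathcal{A}(\mathbf{0})$ is defined as the set of flat sections $y$ such that $\rme^{T/z}y(\mathbf{0},z)$ has moderate growth as $z\to+0$. Since $T=u^{\rm SR}(\mathbf{0})$, this is exactly the space $\mathcal{A}(\mathbf{0})$ of Section \ref{sec-condtion*gammaI}, which is one-dimensional by \cite[Remark 3.1.9]{GGI16}.

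\textbf{Step 3: conclusion.} The second part of Gamma conjecture I, proven in \cite{HKLY21}, gives $\mathcal{Z}^{K,\rm sm}(\cO)|_{\bft^{(2)}=\mathbf{0}}\in\mathcal{A}(\mathbf{0})$. HKLY21 may phrase this as the $J$-function limit $\lim J/\langle J\rangle=\widehat\Gamma_{X_r}$, which is equivalent to this membership by \cite[Section 3]{GGI16}. Together with Step 1, this is precisely the definition of property Gamma-I at $\mathbf{0}$.

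\textbf{Expected obstacle.} The only delicate point is Step 1. The paper's $(\mathrm{SR})$ uses real parts, whereas Conjecture $\mathcal{O}$ uses moduli, so I must check that \cite{HKLY21} supplies simplicity together with the inequality $|u'|\le T$. The argument in Step 1 then handles the conversion from moduli to real parts.
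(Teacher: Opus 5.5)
Your proposal is correct and follows essentially the same route as the paper, which simply invokes \cite[Theorem 1.2]{HKLY21} (Property $\mathcal{O}$ together with $A_{X_r}=\widehat{\Gamma}_{X_r}$) and reads it in the Gamma-I language, using $\widetilde{\ch}(\cO)=1$ so that $\mathcal{Z}^{K,{\rm sm}}(\cO)$ is a multiple of $\mathcal{Z}^{\rm sm}(\widehat{\Gamma}_{X_r})$. Your Step 1, which converts the modulus condition of Property $\mathcal{O}$ into the real-part condition $(\mathrm{SR})$, correctly spells out a step the paper leaves implicit; note also that $(\mathrm{SR})$ at $\mathbf{0}$ follows independently from the paper's own Perron--Frobenius result, since $\mathbf{0}\in V_{\delta_r}$.
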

\noindent We note that the  quantum cohomology of $X_r$ is non-semisimple  at $\mathbf{0}$  when $r>4$ \cite{BM04}.

Let  $\bar\delta_r$, $\bar{q}_1,\dots, \bar{q}_r$ be as in Proposition \ref{prop-estimateofeigevalueswhenrotating}, and choose  $\bar{q}\in(0, \min\{\bar\delta_r, 10^{-6}\})$. Define
\begin{equation}     \bar{\bft}^{(2)}:=\sum_{i=1}^r\log\bar{q}_i\cdot T_i+\log\bar{q}\cdot T_{r+1},
 \end{equation} 
Then $\bar{\bft}^{(2)}\in V_{\delta_r}$ and, in the present case, plays the role of $\bft^{(2)}_1$ in the Strategy-Theorem.
Denote by  $\bar{u}_{1},\dots,\bar{u}_{3+r}$   the eigenvalues of $\hat{c}_1(\bar{\bft}^{(2)})$.
\begin{prop}\label{prop: assumption2}
  There exists a simply connected domain  $U$ inside the ${\rm(SR)}$-region of $X_r$, such that  $V_{\delta_r}\subset U$ and $\bar{\bft}^{(2)}\in U\cap K_{ts}$.  
\end{prop}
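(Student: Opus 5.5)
The plan is to build $U$ as a thin tubular neighbourhood of $V_{\delta_r}$ in $H^2(X_r)\cong\bC^{r+1}$. This neighbourhood lies in the (SR)-region because that region is open. We then check separately, from the estimates of Proposition \ref{prop-estimateofeigevalueswhenrotating}, that $\bar{\bft}^{(2)}$ is tame semisimple.

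\emph{Construction of $U$.} Use the coordinates $t_1,\dots,t_{r+1}$ and identify $H^2(X_r)$ with $\bR^{2r+2}$ carrying the Euclidean metric. In these coordinates
\[
V_{\delta_r}=\{t\in\bR^{r+1}\mid t_{r+1}\le 0,\ 0\le t_i\le\log(1+\delta_r)\ (1\le i\le r)\},
\]
which is a closed convex subset of $\bR^{2r+2}$. Let $\Omega$ denote the (SR)-region. It is open, and $V_{\delta_r}\subset\Omega$ by Proposition \ref{prop-main-simple rightmost}. Hence $d(p):=\mathrm{dist}(p,\bR^{2r+2}\setminus\Omega)$ is a continuous, strictly positive function on $V_{\delta_r}$. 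If $\Omega$ is everything, replace $d$ by $1$. Let $\pi$ be the nearest-point projection onto the closed convex set $V_{\delta_r}$, which is continuous. Set
\[
U:=\{\bft^{(2)}\mid |\bft^{(2)}-\pi(\bft^{(2)})|<\tfrac12 d(\pi(\bft^{(2)}))\}.
\]
Then $U$ is open and $V_{\delta_r}\subset U$. Each point of $U$ lies in the ball of radius $d(p)/2$ about $p=\pi(\bft^{(2)})$, so $U\subset\Omega$.

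\emph{$U$ is simply connected.} For $s\in[0,1]$ consider $h_s(\bft^{(2)})=\pi(\bft^{(2)})+s(\bft^{(2)}-\pi(\bft^{(2)}))$. By the standard property of convex projection, $\pi(h_s(\bft^{(2)}))=\pi(\bft^{(2)})$, so $h_s$ maps $U$ into $U$. Thus $U$ deformation retracts onto the convex, hence contractible, set $V_{\delta_r}$. Therefore $U$ is connected and simply connected, i.e. a simply connected domain. Moreover $\bar{\bft}^{(2)}\in V_{\delta_r}\subset U$, since $\bar q\in(0,1]$ and $\bar q_i\in[1,1+\delta_r]$.

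\emph{$\bar{\bft}^{(2)}\in K_{ts}$.} Apply Proposition \ref{prop-estimateofeigevalueswhenrotating}(1) with $t=0$ and $q=\bar q<\bar\delta_r$.
\begin{itemize}
\item The three small eigenvalues satisfy $|\bar u_k-3\omega^{k-1}c\,\bar q^{1/3}|<\frac1{100}\bar q^{1/3}$, where $c=(\prod\bar q_i)^{1/3}\ge1$. The centres are pairwise at distance $3\sqrt3\,c\,\bar q^{1/3}\gg\frac{2}{100}\bar q^{1/3}$, so $\bar u_1,\bar u_2,\bar u_3$ are distinct.
\item The other eigenvalues satisfy $|\bar u_{3+k}+\bar q_k|<\frac{\delta_r}{100}$. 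The points $\bar q_k=1+\frac{k}{r}\delta_r$ are spaced $\delta_r/r\ge\delta_r/8>\frac{2\delta_r}{100}$ apart, so $\bar u_4,\dots,\bar u_{3+r}$ are distinct.
\item The two clusters do not meet. Since $\bar q<10^{-6}$ and $c\le(3/2)^{8/3}<3$, we get $|\bar u_k|<(9+0.01)\cdot10^{-2}<0.1$ for $k\le3$. On the other hand $|\bar u_{3+k}|>1-\delta_r/100>0.9$.
\end{itemize}
Hence all $r+3$ eigenvalues of $\hat c_1(\bar{\bft}^{(2)})$ are simple, i.e. $\bar{\bft}^{(2)}\in K_{ts}$.

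The only delicate point is that $V_{\delta_r}$ is non-compact, since $q\to0$ is allowed. For this reason a uniform tube radius cannot be used. The variable radius $d(\pi(\cdot))/2$ together with the convex-projection retraction handles this. All the genuine analytic input is contained in Proposition \ref{prop-main-simple rightmost} (deferred to Section \ref{section-condition*forXr}) and in Proposition \ref{prop-estimateofeigevalueswhenrotating}.
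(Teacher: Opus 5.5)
Your proof is correct and takes the same approach as the paper. Proposition \ref{prop-main-simple rightmost} places the closed convex set $V_{\delta_r}$ inside the open (SR)-region; $V_{\delta_r}$ is then thickened to a simply connected domain; and the separation estimates of Proposition \ref{prop-estimateofeigevalueswhenrotating}(1) at $t=0$ give $\bar{\bft}^{(2)}\in K_{ts}$. The one difference is in your favour: the paper only asserts that a simply connected $U$ with $V_{\delta_r}\subset U\subset U'$ exists, whereas your variable-radius tube with the convex-projection retraction proves it, including along the non-compact direction $q\to 0$.
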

\begin{proof}
    Let $U'$ denote the connected component of $\mathbf{0}$ in the ${\rm(SR)}$-region of $X_r$. The subset $V_{\delta_r}$ is connected by definition  and is contained in the
     ${\rm(SR)}$-region of $X_r$ by Proposition \ref{prop-main-simple rightmost}. Thus $V_{\delta_r}\subset U'$ since $\mathbf{0}\in V_{\delta_r}$.
  Since $V_{\delta_r}$ is simply connected, we can further find a simply connected domain $U$ such that $V_{\delta_r}\subset U\subset U'$. 
     
   By Proposition \ref{prop-estimateofeigevalueswhenrotating}, the eigenvalues $\bar u_i$ are pairwise distinct from each other; they are depicted  in Figure \ref{figure1}. Thus $\bar\bft^{(2)}\in K_{ts}$, and hence $\bar\bft^{(2)}\in V_{\delta_r}\cap K_{ts}\subset U\cap K_{ts}$.     
\end{proof} 
   
\begin{figure}[htbp]
\centering
\begin{tikzpicture}[x=30pt,y=30pt]
\draw[->] (-2,0) -- (2.5,0);
\draw[->] (1,-0.8) -- (1,0.8);

\filldraw (1.3, 0) circle [radius =0.05];
\draw (1.3,0.15) node {\small $\bar{u}_{1}$};

\filldraw (0.8, 0.3) circle [radius =0.05];
\draw (0.8,0.55) node {\small $\bar{u}_{2}$};

\filldraw (0.8, -0.3) circle [radius =0.05];
\draw (0.8,-0.55) node {\small $\bar{u}_{3}$};

\filldraw (-0.5, 0) circle [radius =0.05];
\draw (-0.5,0.2) node {\small $\bar{u}_{4}$};
\draw (-0.5,-0.3) node {\small $-\bar{q}_1$};

\filldraw (-0.9, 0) circle [radius =0.05];
\filldraw (-0.9, 0) circle [radius =0.05];

\filldraw (-1.2, 0) circle [radius =0.05];
\filldraw (-1.2, 0) circle [radius =0.05];
\draw (-0.9,0.2) node {\small $\cdots$};

\filldraw (-1.5, 0) circle [radius =0.05];
\draw (-1.5,0.2) node {\small $\bar{u}_{3+r}$};
\draw (-1.5,-0.3) node {\small $-\bar{q}_r$};

\end{tikzpicture}
\vspace{-1em}
\caption{Eigenvalues $\bar{u}_{1},\dots,\bar{u}_{3+r}$ of $\hat{c}_1(\bar{\bft}^{(2)})$.}
\label{figure1}
\end{figure}

We will use the ordering of $[\bar u_1, \dots, \bar u_{3+r}]$ as in Figure  \ref{figure1} in the rest of this subsection.

Recall that  we have abused notation by identifying an object of $\mathcal{D}^b_{\rm coh}(X)$ with its corresponding class in $K(X)$.

\begin{lemm}\label{lem-determine coefficients}
Let $(F_1, \dots, F_s)$ be a full exceptional collection in $\mathcal{D}^b_{\rm coh}(X)$. Suppose that  $V \in K(X)\otimes_\mathbb{Z} \mathbb{C}$   satisfies
\begin{equation*}
 F_{j_1}+mF_{j_2} = \chi\left(F_{j_1} + F_{j_2}, V\right) \cdot V
\end{equation*}
for some $m\in\mathbb{Z}$ and $j_1 < j_2$. Assume further that $\chi(F_{j_1}, F_{j_2}) = -1$. Then $V = \pm (F_{j_1}+mF_{j_2})$.
\end{lemm}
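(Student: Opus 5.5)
Set $c:=\chi(F_{j_1}+F_{j_2},V)\in\bC$, so the hypothesis reads $F_{j_1}+mF_{j_2}=c\,V$. The plan is to pair this identity against $F_{j_1}+F_{j_2}$ on the left, which yields a scalar equation for $c$; solving it shows $c=\pm1$, and then $V=\pm(F_{j_1}+mF_{j_2})$.

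First, $c\neq 0$. The classes $F_1,\dots,F_s$ form a $\bZ$-basis of $K(X)$, hence a $\bC$-basis of $K(X)\otimes_\bZ\bC$. Therefore $F_{j_1}+mF_{j_2}\neq0$, and $c=0$ would contradict the identity. So $V=c^{-1}(F_{j_1}+mF_{j_2})$.

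Next, substitute this into the definition of $c$. Since $\chi$ is bilinear (extended $\bC$-bilinearly), we get
\[
c=\chi\bigl(F_{j_1}+F_{j_2},\,c^{-1}(F_{j_1}+mF_{j_2})\bigr)=c^{-1}\,\chi(F_{j_1}+F_{j_2},F_{j_1}+mF_{j_2}).
\]
Expand the right-hand pairing using the exceptional-basis relations recalled earlier: $\chi(F_i,F_i)=1$ and $\chi(F_{j_2},F_{j_1})=0$ because $j_2>j_1$, together with the hypothesis $\chi(F_{j_1},F_{j_2})=-1$. The four terms are
\[
\chi(F_{j_1},F_{j_1})+m\chi(F_{j_1},F_{j_2})+\chi(F_{j_2},F_{j_1})+m\chi(F_{j_2},F_{j_2})=1-m+0+m=1.
\]
Hence $c^2=1$, so $c=\pm1$, and $V=\pm(F_{j_1}+mF_{j_2})$.

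There is no real obstacle; the argument is a direct computation. The only points to watch are the asymmetry of $\chi$, which is exactly why the ordering $j_1<j_2$ makes $\chi(F_{j_2},F_{j_1})$ vanish, and the use of $c\neq0$ before dividing.
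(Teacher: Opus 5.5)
Your proof is correct and is essentially the paper's argument. The paper writes $V=\sum a_iF_i$ and compares coefficients to see that $V$ is supported on $F_{j_1},F_{j_2}$ with $\xi a_{j_1}=1$ and $\xi a_{j_2}=m$; it then computes $\xi=a_{j_1}$ from the same four-term expansion of $\chi$ and concludes $a_{j_1}=\pm1$, whereas you reach the same conclusion coordinate-free, observing $c\neq 0$ and substituting $V=c^{-1}(F_{j_1}+mF_{j_2})$ to get $c^2=1$.
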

\begin{proof}
   Since $F_1, \dots, F_s$ form a $\bZ$-basis for $K(X)$, it follows that we can write $V = \sum_{i=1}^s a_iF_i$. Let $\xi := \chi(F_{j_1} + F_{j_2}, V)$. Then the condition becomes $F_{j_1}+mF_{j_2} = \xi \sum a_iF_i$. Comparing coefficients immediately yields 
    \begin{equation*}
    \xi \cdot a_{j_1} = 1,\,\, \xi \cdot a_{j_2} = m, \text{ and } \xi\cdot a_i = 0\text{ for all } i \neq j_1,j_2.    
   \end{equation*}
This forces $a_i = 0$ when $i \neq j_1,j_2$, yielding $V = a_{j_1}F_{j_1}+a_{j_2}F_{j_2}$. Evaluating $\xi$ via bilinearity gives
\begin{align*}
   \xi =a_{j_1}\chi(F_{j_1},F_{j_1})+a_{j_1}\chi(F_{j_2},F_{j_2})+a_{j_2}\chi(F_{j_1},F_{j_2})+a_{j_2}\chi(F_{j_2},F_{j_2})= a_{j_1}.
\end{align*}
Here in the second euality, we used the definition of an exceptional collection and the assumption $\chi(F_{j_1},F_{j_2})=-1$.  Combining $\xi=a_{j_1}$ with the relation $\xi \cdot a_{j_1} = 1$ and $\xi \cdot a_{j_2} = m$, we obtain  $a_{j_1} = \pm 1$ and correspondingly $a_{j_2}=\pm m$. This proves the lemma.
\end{proof}

Consider the paths $\phi_+(t),\phi_-(t),\phi_k(t)(1\le k\le r):[0,1]\to H^2(X_r)$ given by 
    \begin{equation*}
    \phi_\pm(t):=\bar{\bft}^{(2)}\pm2\pi\mathbf{i} t\cdot H, \,\text{ and }\, \phi_k(t):=\bar{\bft}^{(2)}-2\pi\mathbf{i}t\cdot E_k.
    \end{equation*} 
  We note that  the images of these paths are contained in $K_{ts}$ by Proposition \ref{prop-estimateofeigevalueswhenrotating}.

\begin{prop}\label{prop: assumption3a}
    Denote $V_1:=\mathcal{O}$. Let $V_2, \cdots, V_{3+r}$ be the objects in $D^b_{\rm coh}(X_r)$, such that they are obtained from $\cO$ via Galois actions given by the paths  $\phi_+(t),\phi_-(t),\phi_1(t),\dots,  \phi_r(t)$.  Let $\tilde{V}_1=V_3,\,\tilde{V}_2=V_1=\cO,\,\tilde{V}_3=V_2$. For each $1\leq k\leq r$, let $\tilde{V}_{3+k}\in K(X)\otimes_\bZ\bC$ satisfy the equation
   \begin{equation*}
       \mathcal{Z}^{K,\rm{sm}}(V_2)= \mathcal{Z}^{K,\rm{sm}}(V_{3+k})-[\mathcal{Z}^{K,\rm{sm}}(V_{3+k}),\mathcal{Z}^{K,\rm{sm}}(\tilde{V}_{3+k}))\cdot\mathcal{Z}^{K,\rm{sm}}(\tilde{V}_{3+k}).
   \end{equation*} Then 
   $\tilde{V}_{3+k}=\pm L_{\mathcal{O}(-H)}\mathcal{O}(E_k-H)$. In particular,   
   $ (\tilde V_{3+r}, \dots, \tilde V_4, \tilde V_3, \tilde V_2, \tilde V_1)$ form a full exceptional collection of $D^b_{\rm coh}(X_r)$.
\end{prop}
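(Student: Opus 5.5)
The plan is to translate everything into the Grothendieck group $K(X_r)$, identify the relevant classes with objects of an explicit full exceptional collection, and then invoke Lemma \ref{lem-determine coefficients}.

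\emph{Step 1: identify the $V_i$ as line bundles.} By \eqref{eqn-Galoisaction} and \eqref{equ-monodromy-invariance}, a path from $\bar{\bft}^{(2)}$ to $\bar{\bft}^{(2)}-2\pi\mathbf{i}\xi$ turns $\cO$ into $L_\xi$. The endpoints of $\phi_\pm$ and $\phi_k$ then give
\[
V_2=\cO(\mp H),\qquad V_3=\cO(\pm H),\qquad V_{3+k}=\cO(E_k).
\]
I will fix these signs carefully, since the later identification depends on them. With the paper's conventions, $\phi_+$ corresponds to $\xi=-H$, so $V_2=\cO(-H)$ and $V_3=\cO(H)$.

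\emph{Step 2: reduce to an identity in $K(X_r)$.} $\mathcal{Z}^{K,\rm sm}$ extends $\bC$-linearly to an isomorphism $K(X)\otimes\bC\to\mathcal{S}^{\rm sm}$, and by \eqref{equ-resp pairing} the pairing $[\cdot,\cdot)$ corresponds to $\chi$. So the defining equation of $\tilde V_{3+k}$ is equivalent to
\[
V_{3+k}-V_2=\chi(V_{3+k},\tilde V_{3+k})\,\tilde V_{3+k}\quad\text{in }K(X_r)\otimes\bC .
\]

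\emph{Step 3: fit this into the form of Lemma \ref{lem-determine coefficients}.} I need a full exceptional collection $(F_1,\dots,F_s)$ and indices $j_1<j_2$ with:
\begin{itemize}
\item $V_{3+k}-V_2=F_{j_1}+mF_{j_2}$;
\item $\chi(F_{j_1}+F_{j_2},\,\cdot\,)=\chi(V_{3+k},\,\cdot\,)$ on the relevant vectors;
\item $\chi(F_{j_1},F_{j_2})=-1$.
\end{itemize}
To build it, I would start from Orlov's full exceptional collection for the blow-up, $(\cO_{E_1}(-1),\dots,\cO_{E_r}(-1),\cO(-H),\cO,\cO(H))$ up to twist and ordering. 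I would then use the standard relations $\cO(E_k)-\cO=\cO_{E_k}(E_k)=\cO_{E_k}(-1)$ and their twists by $\pm H$. The key computation is that $\Hom^\bullet(\cO(-H),\cO(E_k-H))=H^\bullet(\cO(E_k))=\bC$ in degree $0$. Hence $L_{\cO(-H)}\cO(E_k-H)$ is the cone of the unique map, with class $\cO(E_k-H)-\cO(-H)$, and its Euler pairings with $\cO(-H),\cO,\cO(H)$ and with the other $E_j$-objects are computed by Riemann--Roch on $X_r$.

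I expect to need a twist, or a mutation through $\cO$ or $\cO(H)$, to match $V_{3+k}-V_2$ exactly with the class of $L_{\cO(-H)}\cO(E_k-H)$ plus a multiple of a neighbouring member. Once the match and the condition $\chi(F_{j_1},F_{j_2})=-1$ are verified, the lemma gives $\tilde V_{3+k}=\pm(F_{j_1}+mF_{j_2})$, and unwinding this should give $\tilde V_{3+k}=\pm L_{\cO(-H)}\cO(E_k-H)$.

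\emph{Step 4: fullness and exceptionality.} The collection
\[
(\tilde V_{3+r},\dots,\tilde V_4,\tilde V_3,\tilde V_2,\tilde V_1)=\bigl(L_{\cO(-H)}\cO(E_r-H),\dots,L_{\cO(-H)}\cO(E_1-H),\cO(-H),\cO,\cO(H)\bigr)
\]
is obtained from the full exceptional collection $(\cO(E_r-H),\dots,\cO(E_1-H),\cO(-H),\cO,\cO(H))$ by left-mutating each $\cO(E_k-H)$ across $\cO(-H)$, i.e.\ by braid group moves. The starting collection is the Orlov collection twisted by $\cO(-H)$ and reordered, using that the $\cO(E_k-H)$ are mutually orthogonal. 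Since mutations preserve being full and exceptional, the conclusion follows.

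The main obstacle is Step 3: getting the signs and twist conventions right so that $V_{3+k}-V_2$ really has the shape $F_{j_1}+mF_{j_2}$ with $\chi(F_{j_1},F_{j_2})=-1$, and checking the required $\chi$-values against $\tilde V_{3+k}$. If the direct match fails, I would first try replacing $\cO(-H)$ by $\cO(H)$, i.e.\ reversing the Galois orientation in Step 1.
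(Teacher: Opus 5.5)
\textbf{There is a genuine gap, and it sits in Step 1.} You read ``obtained from $\cO$ via $\phi_k$'' literally and set $V_{3+k}=\cO(E_k)$. The wording of the statement does support that reading, but with that identification the conclusion is false. So Step 3 cannot be completed by any choice of collection, twist or mutation.

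Here is why. Step 2 turns the defining equation into $\cO(E_k)-\cO(-H)=\chi(\cO(E_k),\tilde V_{3+k})\,\tilde V_{3+k}$. Hence $\tilde V_{3+k}=c\,(\cO(E_k)-\cO(-H))$ for some scalar $c$. Riemann--Roch gives $\chi(\cO(E_k),\cO(-H))=\chi(\cO(-H-E_k))=-1$, so $\chi(\cO(E_k),\cO(E_k)-\cO(-H))=2$ and therefore $2c^2=1$. The result is $\tilde V_{3+k}=\pm 2^{-1/2}(\cO(E_k)-\cO(-H))$. This is not even an integral class, let alone $\pm L_{\cO(-H)}\cO(E_k-H)$. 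Your fallback of reversing the Galois orientation does not help either.

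\textbf{The missing idea is which flat section gets transported along $\phi_k$.} In the paper it is not $\mathcal{Z}^{K,\rm sm}(\cO)$ but $\mathcal{Z}^{K,\rm sm}(\cO(-H))$, the section represented by the half-line from $\bar u_2$. Along $\phi_k$, Proposition~\ref{prop-estimateofeigevalueswhenrotating}(2) shows that $\bar u_{3+k}$ circles the origin while $\bar u_2$ stays essentially fixed. Transporting that half-line therefore produces the bent path $L'_{3+k}$, which starts at $\bar u_2$ and winds around $\bar u_{3+k}$. Its right mutation across $\bar u_{3+k}$ returns $L_2$.

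Consequently $V_{3+k}=G(E_k)(\cO(-H))=\cO(E_k-H)$; equivalently, $\cO$ transported along $\phi_+$ followed by $\phi_k$. With this, $V_{3+k}-V_2=\cO(E_k-H)-\cO(-H)$ is exactly the class of $L_{\cO(-H)}\cO(E_k-H)$, namely $[\cO_{E_k}(-1)]$, which is your own relation twisted by $-H$. Since $\chi(\cO(E_k-H),\cO(-H))=\chi(\cO(-E_k))=0$, Lemma~\ref{lem-determine coefficients} applies at once with $m=0$, $F_{j_1}=L_{\cO(-H)}\cO(E_k-H)$, $F_{j_2}=\cO(-H)$ and $\chi(F_{j_1},F_{j_2})=-1$. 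No extra twist or mutation is needed.

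\textbf{The paper also supplies analytic input that your purely $K$-theoretic reading skips.} It uses global Gamma-I at $\bar\bft^{(2)}$ together with the eigenvalue tracking to decide which paths represent $\mathcal{Z}^{K,\rm sm}(\cO(\mp H))$ and $\mathcal{Z}^{K,\rm sm}(\cO(E_k-H))$. It also fixes a sign: the Laplace transform over $L'_{3+k}$ is a priori only $\pm\mathcal{Z}^{K,\rm sm}(\cO(E_k-H))$. The minus sign is excluded because it would force $\tilde V_{3+k}=\pm(L_{\cO(-H)}\cO(E_k-H)+2\cO(-H))$, contradicting the semiorthogonality $[y_2,y_{3+k})=0$.

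\textbf{Step 4 also has the ordering wrong.} The sequence $(\cO(E_r-H),\dots,\cO(E_1-H),\cO(-H),\cO,\cO(H))$ is not exceptional. You computed yourself that $\Hom^\bullet(\cO(-H),\cO(E_k-H))=\bC\neq 0$, yet $\cO(-H)$ is placed after $\cO(E_k-H)$. Likewise, $L_{\cO(-H)}$ can only be applied to $\cO(E_k-H)$ when $\cO(-H)$ stands to its left.

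The correct starting point is the augmentation $(\cO(-H),\cO(E_r-H),\dots,\cO(E_1-H),\cO,\cO(H))$. Moving $\cO(-H)$ to the right past each $\cO(E_k-H)$ by $\sigma^{-1}$-moves yields the target collection. That target is Orlov's collection $(\cO_{E_r}(-1),\dots,\cO_{E_1}(-1),\cO(-H),\cO,\cO(H))$, since the cone of the inclusion $\cO(-H)\to\cO(E_k-H)$ is $\cO_{E_k}(-1)$.
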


\begin{proof}
    It follows from  Proposition \ref{prop: assumption2} and Theorem \ref{thm-global gamma-I} that $X_r$ satisfies Gamma-I at $\bar{\bft}^{(2)}$. Hence $\mathcal{Z}^{K,{\rm{sm}}}(\cO)$ can be  represented at $\bar{\bft}^{(2)}$ by the half line $L_1:=\bar{u}_{1}+\bR_{\ge0}\rme^{\mathbf{i}0}$. 
    
    Consider the movement of eigenvalues given by the path $\phi_+(t)$. Denote the eigenvalues of $\hat{c}_1(\phi_+(t))$ by $\{u^+_{i}(t)\}_i$ with $u^+_{i}(0)=\bar{u}_{i}$. By Proposition \ref{prop-estimateofeigevalueswhenrotating} (1), we see that $u^+_{1}(t),u^+_{2}(t),u^+_{3}(t)$ move almost along the circle centered at origin with radius $3(\prod\bar{q}_i)^{1\over 3}\bar{q}^{1\over 3}$  counterclockwise,  and they do not collide with each other. In addition, $u^+_{3+k}(t)$  is  nearly fixed and approximately equal to $-\bar{q}_k$ for $1\le k\le r$. As $u^+_1(t)$ moves, the corresponding translation of $L_1$ does not pass through other eigenvalues $u^+_i(t)$ ($i\ne1$), and at $t=1$,  $L_1$ is  translated to  $L_2:=\bar{u}_{2}+\bR_{\ge0}\rme^{\mathbf{i}0}$ since $u^+_1(1)=\bar{u}_2$. So $\mathcal{Z}^{K,{\rm{sm}}}(\cO)$ can be represented by $L_2$ at $\phi_+(1)=\bar{\bft}^{(2)}+2\pi\mathbf{i}H$. From the Galois action \eqref{eqn-Galoisaction} and its $K$-group framing interpretation \eqref{equ-monodromy-invariance}, this implies that $\mathcal{Z}^{K,{\rm{sm}}}(\cO(-H))=G(-H)(\mathcal{Z}^{K,{\rm{sm}}}(\cO))$ can be represented by $L_2$ at $\bar{\bft}^{(2)}$. Thus $\cO(-H)=V_2=\tilde{V}_3$. Similarly, considering movement given by the path $\phi_-(t)$, we see that $\mathcal{Z}^{K,{\rm{sm}}}(\cO(H))$ can be represented by the half line $L_3:=\bar{u}_{3}+\bR_{\ge0}\rme^{\mathbf{i}0}$ at $\bar{\bft}^{(2)}$, and then $\cO(H)=V_3=\tilde{V}_1$.

For $k_0=1,\dots,r$, we consider the movement of eigenvalues given by the path $\phi_{k_0}(t)$. Denote the eigenvalues of $\hat{c}_1(\phi_{k_0}(t))$ by $\{u^{(k_0)}_{i}(t)\}_i$ with $u^{(k_0)}_{i}(0)=\bar{u}_{i}$. By Proposition \ref{prop-estimateofeigevalueswhenrotating} (2), we see that for $1\le j\le 3$, $u^{(k_0)}_{j}(t)$ is  nearly fixed and approximately equal to $3\omega^{j-1}(\prod\bar{q}_i)^{1\over 3}\bar{q}^{1\over 3}$, and for $1\le k\le r$ with $k\ne k_0$, $u^{(k_0)}_{3+k}(t)$ is  nearly fixed and approximately equal to $-\bar{q}_k$. Besides, $u^{(k_0)}_{3+k_0}(t)$ moves almost along the circle centered at origin with radius $\bar{q}_{k_0}$ counterclockwise. As $u^{(k_0)}_{3+k_0}(t)$ moves, we correspondingly rotate and deform $L_1$ so that it does not pass through eigenvalues $u^{(k_0)}_i(t)$ for $i\ne2$, and at $t=1$ we get the (heavily bent) admissible path $L'_{3+k_0}$ which starts from $\bar{u}_2$, goes around $\bar{u}_{3+k_0}$ and has zero end-phase (see Figure \ref{figure3}).  So $\mathcal{Z}^{K,{\rm{sm}}}(\cO(-H))$ can be represented by $L'_{3+k_0}$ at $\phi_{k_0}(1)=\bar{\bft}^{(2)}-2\pi\mathbf{i}E_{k_0}$. From the Galois action \eqref{eqn-Galoisaction} and its $K$-group framing interpretation \eqref{equ-monodromy-invariance}, this implies that $\mathcal{Z}^{K,{\rm{sm}}}(\cO(E_{k_0}-H))=G(E_{k_0})(\mathcal{Z}^{K,{\rm{sm}}}(\cO(-H)))$ can be represented by $L'_{3+k_0}$ at $\bar{\bft}^{(2)}$. Then $V_{3+k_0}=\cO(E_{k_0}-H)$.

\begin{figure}[htbp]
\vspace{-1em}
\centering
\begin{tikzpicture}[x=30pt,y=30pt]
\draw (0,0) node {\phantom{A}};

\filldraw (5.2, 0) circle [radius =0.05];
\draw (5.3,0.2) node {\small $\bar{u}_1$};

\filldraw (4.8, 0.45) circle [radius =0.05];
\draw (4.6,0.25) node {\small $\bar{u}_2$};

\filldraw (4.8, -0.45) circle [radius =0.05];
\draw (4.45,-0.45) node {\small $\bar{u}_3$};

\filldraw (3.5, 0) circle [radius =0.05];
\draw (3.5,-0.2) node {\small $\bar{u}_4$};

\draw (3,0) node {\small $\cdots$};

\filldraw (2.5, 0) circle [radius =0.05];
\draw (2.5,-0.2) node {\small $\bar{u}_{3+k_0}$};

\draw (2,0) node {\small $\cdots$};

\filldraw (1.5, 0) circle [radius =0.05];
\draw (1.5,-0.2) node {\small $\bar{u}_{3+r}$};

\draw[thick] (5.2,0) -- (8,0);
\draw[thick] (4.8,0.45) -- (8,0.45);

\draw[thick] (4.8,-0.45) -- (8,-0.45);

\draw[color=red,thick] (4.8,0.45) .. controls  (2,0.5) and (3,-0.14)  .. (2.5,-0.14);
\draw[color=red,thick] (2.5,-0.14) .. controls  (2.3,-0.1) and (2.2,0.28) .. (2.7,0.9);
\draw[color=red,thick] (2.7,0.9) .. controls  (3,1.2) and (3.5,1.5) .. (5,1.5);

\draw[color=red,thick] (5,1.5) .. controls  (7,1.5) .. (8,1.5);
\draw[color=red] (8.87,1.5) node {\small $\cO(E_{k_0}-H)$};
\draw[color=red] (5.9,1.7) node {\small$L_{3+k_0}'$};

\draw[color=blue,thick,dashed] (2.5,0) .. controls  (2.7,1) and (3,1) .. (7.75,1);
\draw[color=blue] (9.5,1) node {\small $\pm L_{\cO(-H)}\cO(E_{k_0}-H)$};

\draw (8.2,0) node {\small$\cO$};
\draw (6.5,0.2) node {\small$L_1$};

\draw (8.6,0.5) node {\small$\cO(-H)$};

\draw (6,0.7) node {\small$L_2$};

\draw (8.5,-0.5) node {\small$\cO(H)$};
\draw (6.25,-0.3) node {\small$L_3$};

\end{tikzpicture}
\vspace{-1em}
\caption{Admissible paths and corresponding $K$-classes. All paths have zero end-phase, and solid paths are given by monodromy transformation from $L_1$. }
\label{figure3}
\end{figure}

 Take the simply connected domain $U$ as in Proposition \ref{prop: assumption2}. Then $U\subset K_{ts}$, and suppose that $U$  is properly-chosen with respect to $\{(u_i,\Psi_i)\}$, where $u_i(\bar\bft^{(2)})=\bar u_i$ for all $i$. Let $\hat{y}_i$ be the $\widehat{\nabla}^{\rm{sm}}$-flat section from Lemma \ref{lemma-holomorphicflatsectionwrtlaplacedualconn}. Choosing suitable signs of $\Psi_1,\Psi_2,\Psi_3$, we can suppose that the Laplace transforms of $\hat{y}_1,\hat{y}_2,\hat{y}_3$ over $L_1,L_2,L_3$ give $\mathcal{Z}^{K,{\rm{sm}}}(\cO)$, $\mathcal{Z}^{K,{\rm{sm}}}(\cO(-H))$, $ \mathcal{Z}^{K,{\rm{sm}}}(\cO(H))$, respectively. Let $y'_{3+k_0}$ be the Laplace transform of $\hat{y}_{3}$ over $L'_{3+k_0}$. Then we have a priory $y'_{3+k_0}=\pm\mathcal{Z}^{K,{\rm{sm}}}(\cO(E_{k_0}-H))$ by the sign ambiguity of normalized idempotents. We will show that $y'_{3+k_0}=\mathcal{Z}^{K,{\rm{sm}}}(\cO(E_{k_0}-H))$ in \eqref{equ-positive determination of O(E-H)}. Construct a  dotted curve $L''_{3+k_0}$ as in Figure \ref{figure3} that starts from $\bar{u}_{3+k_0}$, does not touch $L_2$, $L_{3+k_0}'$ nor  eigenvalues $\bar u_i (i\neq 3+k_0)$, and has zero end-phase.
Let $y_{3+k_0}$ be the Laplace transform of $\hat{y}_{3+k_0}$ over the dotted curve $L''_{3+k_0}$. Then $\mathcal{Z}^{K,{\rm{sm}}}(\cO(-H))$ is the right mutation of $y'_{3+k_0}$ with respect to $\bar{u}_{3+k_0}$, implying
\begin{align}\label{eqn-rightmutationeqn}
    \mathcal{Z}^{K,{\rm{sm}}}(\cO(-H))=y'_{3+k_0}-[y'_{3+k_0},y_{3+k_0})\cdot y_{3+k_0}.
\end{align} 
We claim that for any $k_0\in\{1, \dots, r\}$, $y_{3+k_0}=\pm\mathcal{Z}^{K,\rm{sm}}(L_{\cO(-H)}\cO(E_{k_0}-H))$; that is, $\tilde{V}_{3+k_0}=L_{\cO(-H)}\cO(E_{k_0}-H)$.  
Note that $\cO(-H)$, $\cO$, $\cO(H)$ form a full exceptional collection on $\mathcal{D}^b_{\rm coh}(\bP^2)$. Then their \emph{augmentation}, $\cO(-H)$, $\cO(E_r-H),\dots,\cO(E_1-H),\cO,\cO(H)$, form a full exceptional collection of $\mathcal{D}^b_{\rm coh}(X_r)$ (cf. \cite{HP11}). Applying left mutations, we see that $$L_{\cO(-H)}\cO(E_r-H),\dots,L_{\cO(-H)}\cO(E_1-H),\cO(-H),\cO,\cO(H)$$ also form a full exceptional collection. That is, $(\tilde{V}_{3+r},\dots,\tilde{V}_1)$ form a full exceptional collection.

It remains to show our claim. Let $\tilde{V}_{3+k_0}\in K(X_r)\otimes_{\mathbb{Z}}\mathbb{C}$ satisfy $y_{3+k_0}=\mathcal{Z}^{K,\rm{sm}}(\tilde{V}_{3+k_0})$. Note that $y'_{3+k_0}=\pm\mathcal{Z}^{K,{\rm{sm}}}(\cO(E_{k_0}-H))$. We first assume that $y'_{3+k_0} = -\mathcal{Z}^{K,{\rm{sm}}}(\cO(E_{k_0}-H))$, and then \eqref{eqn-rightmutationeqn} together with \eqref{equ-resp pairing} gives 
\begin{align}\label{eqn-negativesignrelationfory'3+k0}
\cO(-H)=-\left(\cO(E_{k_0}-H)-\chi(\cO(E_{k_0}-H),\tilde{V}_{3+k_0})\cdot \tilde{V}_{3+k_0}\right).
\end{align}
Recall the following relation in $K(X)$:
\begin{align}\label{eqn-leftmutationrelationinKX}
L_{\cO(-H)}\cO(E_{k_0}-H) = \cO(E_{k_0}-H) - \cO(-H).
\end{align}
Then  
\begin{equation*}
\chi(L_{\cO(-H)}\cO(E_{k_0}-H),\cO(-H))\stackrel{\text{\eqref{eqn-leftmutationrelationinKX}}}{=}\chi(\cO(E_{k_0}-H)-\cO(-H),\cO(-H))=-1,
\end{equation*}
where definition of exceptional collection is used in the second equality, and
\begin{align*}     \cO(-H)+\cO(E_{k_0}-H)&\stackrel{\text{\eqref{eqn-leftmutationrelationinKX}}}{=}L_{\cO(-H)}\cO(E_{k_0}-H)+2\cO(-H)\\
    &\stackrel{\eqref{eqn-negativesignrelationfory'3+k0}}{=}\chi(L_{\cO(-H)}\cO(E_{k_0}-H)+\cO(-H),\tilde{V}_{3+k_0})\cdot \tilde{V}_{3+k_0}.
\end{align*}
Apply Lemma \ref{lem-determine coefficients} with $F_{j_1}=L_{\cO(-H)}\cO(E_{k_0}-H),F_{j_2}=\cO(-H),m=2$, we get $\tilde{V}_{3+k_0}=\pm\left(L_{\cO(-H)}\cO(E_{k_0}-H)+2\cO(-H)\right)$. \par
However, as part of AEFS near $\bar\bft^{(2)}$ associated with  a sufficiently small   positive phase, we must have $[y_2,y_{3+k_0})=0$; on the other hand, $$[y_2,y_{3+k_0})\stackrel{\text{\eqref{equ-resp pairing}}}{=}\chi(\cO(-H),L_{\cO(-H)}\cO(E_{k_0}-H)+2\cO(-H))=2,$$where definition of exceptional collection is used in the second equality. We get a contradiction. So we conclude that 
\begin{equation}\label{equ-positive determination of O(E-H)}
    y'_{3+k_0}=\mathcal{Z}^{K,{\rm{sm}}}(\cO(E_{k_0}-H))
\end{equation}
A similar application of Lemma \ref{lem-determine coefficients} gives $\tilde{V}_{3+k_0}=\pm L_{\cO(-H)}\cO(E_{k_0}-H)$. \qedhere

\begin{figure}[htbp]
\vspace{-1em}
\centering
\begin{tikzpicture}[x=30pt,y=30pt]
\draw (0,0) node {\phantom{A}};

\filldraw (5.2, 0) circle [radius =0.05];
\draw (5.3,0.2) node {\small $\bar{u}_1$};

\filldraw (4.8, 0.45) circle [radius =0.05];
\draw (4.6,0.25) node {\small $\bar{u}_2$};

\filldraw (4.8, -0.45) circle [radius =0.05];
\draw (4.45,-0.45) node {\small $\bar{u}_3$};

\filldraw (3.5, 0) circle [radius =0.05];
\draw (3.5,-0.2) node {\small $\bar{u}_4$};

\draw (3,0) node {\small $\cdots$};

\filldraw (2.5, 0) circle [radius =0.05];
\draw (2.5,-0.2) node {\small $\bar{u}_{3+k_0}$};

\draw (2,0) node {\small $\cdots$};

\filldraw (1.5, 0) circle [radius =0.05];
\draw (1.5,-0.2) node {\small $\bar{u}_{3+r}$};

\draw[thick] (5.2,0) -- (8,0);
\draw[thick] (4.8,0.45) -- (8,0.45);

\draw[thick] (4.8,-0.45) -- (8,-0.45);

\draw[thick] (3.5,0) .. controls  (3.7,0.85) and (4,0.85) .. (8,0.85);
\draw (9.4,0.85) node {\small $\pm L_{\cO(-H)}\cO(E_1-H)$};
\draw (3.35,0.5) node {\small $L_4''$};

\draw[thick] (2.5,0) .. controls  (2.7,1.3) and (3,1.3) .. (8,1.3);
\draw (6.5,1.17) node {\small $\vdots$};
\draw (9.5,1.3) node {\small $\pm L_{\cO(-H)}\cO(E_{k_0}-H)$};
\draw (2.15,0.5) node {\small $L_{3+k_0}''$};

\draw[thick] (1.5,0) .. controls  (1.7,1.8) and (2,1.8) .. (8,1.8);
\draw (6.3,1.65) node {\small $\vdots$};
\draw (9.5,1.8) node {\small $\pm L_{\cO(-H)}\cO(E_{r}-H)$};
\draw (1.25,0.5) node {\small $L_{r}''$};

\draw (8.2,0) node {\small$\cO$};
\draw (6.5,0.18) node {\small$L_1$};

\draw (8.6,0.5) node {\small$\cO(-H)$};

\draw (6,0.65) node {\small$L_2$};

\draw (8.5,-0.5) node {\small$\cO(H)$};
\draw (6.25,-0.3) node {\small$L_3$};

\end{tikzpicture}
\vspace{-1em}
\caption{Admissible paths and corresponding $K$-classes. All paths have zero end-phase.}
\label{figure4}
\end{figure}

\end{proof}

\begin{prop}\label{prop: assumption3b}
  Let $(\tilde V_{3+r}, \dots, \tilde V_{1})$ be given as in Proposition \ref{prop: assumption3a}. Set $\phi:={\pi\over 8}$. 
    Then  $\Im(\rme^{-\mathbf{i}\phi}\bar{u}_{r+3})>\dots>\Im(\rme^{-\mathbf{i}\phi}\bar{u}_{4})>\Im(\rme^{-\mathbf{i}\phi}\bar{u}_{2})>\Im(\rme^{-\mathbf{i}\phi}\bar{u}_{1})>\Im(\rme^{-\mathbf{i}\phi}\bar{u}_{3})$, and   $\mathcal{Z}^{K, \mathrm{sm}}(\tilde{V}_{i})$ respects  $(u_i, \Psi_i)$ for $4\leq i\leq r+3$, $\mathcal{Z}^{K, \mathrm{sm}}(\tilde{V}_{3})$ respects $(u_2, \Psi_2)$, $\mathcal{Z}^{K, \mathrm{sm}}(\tilde{V}_{2})$ respects $(u_1, \Psi_1)$, and $\mathcal{Z}^{K, \mathrm{sm}}(\tilde{V}_{1})$ respects $(u_3, \Psi_3)$   at $\bar{\bft}^{(2)}$ with  phase $\phi$.
\end{prop}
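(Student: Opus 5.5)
The proof splits into two parts: the ordering of the eigenvalues, and then the identification of the flat sections respecting each eigenvalue at phase $\phi=\pi/8$.

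\textbf{Step 1: the ordering.} Rotating by $\rme^{-\mathbf{i}\phi}$ gives, for $u=|u|\rme^{\mathbf{i}\alpha}$,
\[
\Im(\rme^{-\mathbf{i}\phi}u)=|u|\sin(\alpha-\phi).
\]
By Proposition \ref{prop-estimateofeigevalueswhenrotating}(1) at $t=0$, we have $\bar u_{3+k}=-\bar q_k+\epsilon_k$ with $|\epsilon_k|<\delta_r/100$. Since $\bar q_k=1+\frac{k}{r}\delta_r$ are spaced $\delta_r/r\ge\delta_r/8$ apart, we get $\Im(\rme^{-\mathbf{i}\phi}\bar u_{3+k})=\bar q_k\sin\phi+O(\delta_r/100)$. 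This is strictly increasing in $k$ and of size about $\sin(\pi/8)\approx0.38$.

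The three small eigenvalues satisfy $\bar u_k\approx 3\omega^{k-1}\rho$ with $\rho=(\prod\bar q_i)^{1/3}\bar q^{1/3}$, up to error $\bar q^{1/3}/100$. Because $\bar q<10^{-6}$, we have $|\bar u_k|\le 3\cdot 2^{8/3}\cdot10^{-2}<0.2$. Their rotated imaginary parts are approximately $3\rho\sin(-\pi/8)$, $3\rho\sin(2\pi/3-\pi/8)$ and $3\rho\sin(-2\pi/3-\pi/8)$. These are distinct, in the order $\bar u_2>\bar u_1>\bar u_3$, with gaps of order $\rho$ that dominate the $\bar q^{1/3}/100$ error. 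All three lie below the values $\bar q_k\sin\phi$ attached to $\bar u_{3+k}$. This gives the chain of inequalities, and in particular shows $\phi$ is admissible at $\bar{\bft}^{(2)}$.

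\textbf{Step 2: identifying the sections.} I will deform the zero-phase admissible paths of Figure \ref{figure4} into the straight half-lines $L_i(\bar{\bft}^{(2)},\phi)=\bar u_i+\bR_{\ge0}\rme^{\mathbf{i}\phi}$, and check that no mutation occurs along the way.

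For $i=1,2,3$, rotate $L_1,L_2,L_3$ counterclockwise about their endpoints from angle $0$ to $\pi/8$. The swept sector from $\bar u_i$ contains no other eigenvalue. The large eigenvalues have negative real part, so they lie behind every endpoint. Among the small ones, $\bar u_2$ lies at angle about $2\pi/3$ from $\bar u_1$ direction-wise, so the sector from $\bar u_1$ of angles in $[0,\pi/8]$ misses it. Similar angle checks handle the remaining pairs, using the triangle geometry with error $1/100$ relative to $3$.

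So Gamma-I at $\bar{\bft}^{(2)}$, which follows from Proposition \ref{prop: assumption2} and Theorem \ref{thm-global gamma-I}, together with the proof of Proposition \ref{prop: assumption3a}, shows that $\mathcal{Z}^{K,\rm sm}(\cO)$, $\mathcal{Z}^{K,\rm sm}(\cO(-H))$ and $\mathcal{Z}^{K,\rm sm}(\cO(H))$ are the Laplace transforms over $L_i(\bar{\bft}^{(2)},\phi)$ for $i=1,2,3$. They are therefore AEFS members respecting $(u_1,\Psi_1)$, $(u_2,\Psi_2)$, $(u_3,\Psi_3)$ with phase $\phi$. This matches $\tilde V_2=\cO$, $\tilde V_3=\cO(-H)$ and $\tilde V_1=\cO(H)$.

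For $4\le i\le r+3$, the dotted path $L''_i$ starts at $\bar u_i$, rises above the small eigenvalues and the lines $L_1,L_2$, and ends with phase $0$. The half-line $\bar u_i+\bR_{\ge0}\rme^{\mathbf{i}\pi/8}$ is homotopic to $L''_i$ through admissible paths avoiding the other eigenvalues. Indeed, from $\bar u_{3+k}\approx-\bar q_k$ the ray at angle $\pi/8$ passes above all $\bar u_j$ with $j<3+k$: for $\bar u_{3+j}$ with $j<k$ this holds because they are to the right on the real axis, and for $\bar u_1,\bar u_2,\bar u_3$ because at real part $\approx0$ the ray has height $\approx\bar q_k\tan(\pi/8)>0.4>|\bar u_j|$. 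Both paths lie in the upper region, so no eigenvalue is crossed. Hence $y_{3+k}$, which by Proposition \ref{prop: assumption3a} equals $\pm\mathcal{Z}^{K,\rm sm}(\tilde V_{3+k})$, respects $(u_{3+k},\Psi_{3+k})$ at phase $\phi$, once the sign of $\Psi_{3+k}$ is fixed accordingly (normalized idempotents are defined only up to sign).

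\textbf{Main obstacle.} The hard part is making these homotopy and no-crossing claims precise with the explicit error bounds of Proposition \ref{prop-estimateofeigevalueswhenrotating}. In particular, the dotted curves $L''_{3+k}$ in the proof of Proposition \ref{prop: assumption3a} must be chosen consistently with the rays at phase $\pi/8$. I would handle this by choosing $L''_{3+k}$ to be exactly the rays at phase $\pi/8$, bent to end-phase $0$ far to the right, where they lie above all other rays.
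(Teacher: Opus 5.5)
Your proposal is correct and follows the paper's own proof. The ordering comes from Proposition \ref{prop-estimateofeigevalueswhenrotating}(1): the $\delta_r/r$ spacing of the $\bar q_k$ beats the $\delta_r/100$ error, and $\bar q<10^{-6}$ handles the three small eigenvalues. The identification comes from deforming the paths $L_1,L_2,L_3,L''_4,\dots,L''_{3+r}$ of Figure \ref{figure4} to parallel half-lines of phase $\phi$ without crossing any eigenvalue, so the Laplace transforms form the AEFS. Your explicit angle and height checks only fill in details that the paper leaves to the figures and to ``elementary estimates''.
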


\begin{proof}

For $k=1,\dots,r$, the Laplace transform of $\hat{y}_{3+k}$ over the curved path $L''_{3+k}$ in Figure \ref{figure4} starting from $\bar{u}_{3+k}$ gives the flat section $\mathcal{Z}^{K,{\rm{sm}}}(\tilde{V}_{3+k})$, and similarly $\mathcal{Z}^{K,{\rm{sm}}}(\cO)=\mathcal{Z}^{K,{\rm{sm}}}(\tilde{V}_{2})$, $\mathcal{Z}^{K,{\rm{sm}}}(\cO(-H))=\mathcal{Z}^{K,{\rm{sm}}}(\tilde{V}_{3})$, $\mathcal{Z}^{K,{\rm{sm}}}(\cO(H))=\mathcal{Z}^{K,{\rm{sm}}}(\tilde{V}_{1})$ for $L_1$, $L_2$, $L_3$ respectively.\par
Now we  deform the paths $L_1,L_2,L_3,L''_4,\dots,L''_{3+r}$ in Figure \ref{figure4} to the parallel half lines in Figure \ref{figure5}, so that  each path has argument    $\phi$.  For $1\leq k\leq r-1$, we have
$(\bar{u}_{4+k}-\bar{u}_{3+k})=-{1\over r}\delta_r+(\bar{u}_{4+k}+\bar{q}_{k+1})-(\bar{u}_{3+k}+\bar{q}_{k})$, and consequently
 $\Im(\rme^{-\mathbf{i}\phi}(\bar{u}_{4+k}-\bar{u}_{3+k}))={\delta_r\over r}\sin\phi+ \Im( (\bar{u}_{4+k}+\bar q_{k+1})-(\bar{u}_{3+k}+\bar q_{k}))>{\delta_r\over r}\sin\phi-{\delta_r\over 50}>0$. Here the first inequality follows from Proposition \ref{prop-estimateofeigevalueswhenrotating} (1), and the second inequality follows by noting  $\phi={\pi \over 8}$. 
The  remaining  inequalities $\Im(\rme^{-\mathbf{i}\phi}\bar{u}_{4})>\Im(\rme^{-\mathbf{i}\phi}\bar{u}_{2})>\Im(\rme^{-\mathbf{i}\phi}\bar{u}_{1})>\Im(\rme^{-\mathbf{i}\phi}\bar{u}_{3})$  follow from elementary estimates using Proposition \ref{prop-estimateofeigevalueswhenrotating} (1), since $\bar q<10^{-6}$ is sufficiently small.

Following the discussion between Remark \ref{rmk39} and Theorem \ref{thm:strategy}, the fact that the integration paths in Figure \ref{figure5} are parallel half-lines ensures that $\mathcal{Z}^{K,{\rm{sm}}}(\tilde{V}_{3+r}), \dots, \mathcal{Z}^{K,{\rm{sm}}}(\tilde{V}_{1})$ form an AEFS, and the statement follows.
\end{proof}
\begin{figure}[!htbp]
\vspace{-1.5em}
\centering
\begin{tikzpicture}[x=30pt,y=30pt]
\draw (0,0) node {\phantom{A}};

\filldraw (5.7, -0.3) circle [radius =0.05];
\draw (5.8,-0.15) node {\small $\bar{u}_1$};

\filldraw (5.3, 0) circle [radius =0.05];
\draw (5.1,-0.19) node {\small $\bar{u}_2$};

\filldraw (5.3, -0.6) circle [radius =0.05];
\draw (4.95,-0.6) node {\small $\bar{u}_3$};

\filldraw (3, -0.3) circle [radius =0.05];
\draw (3,-0.5) node {\small $\bar{u}_4$};

\draw (2.5,-0.3) node {\small $\cdots$};

\draw (2,-0.3) node {\small $\cdots$};

\filldraw (1.5, -0.3) circle [radius =0.05];
\draw (1.5,-0.5) node {\small $\bar{u}_{3+r}$};

\draw[thick] (5.7,-0.3) -- (7,-0.04);
\draw[thick] (5.3,0) -- (7,0.34);

\draw[thick] (5.3,-0.6) -- (7,-0.26);

\draw[thick] (3,-0.3) -- (7,0.5);

\draw[thick] (1.5,-0.3) -- (7,0.8);

\draw (7.25,0) node {\small$\cO$};
\draw (7.6,0.26) node {\small$\cO(-H)$};

\draw (7.48,-0.34) node {\small$\cO(H)$};
\draw (8.6,0.58) node {\small$\pm L_{\cO(-H)}\cO(E_1-H)$};
\draw (8,0.75) node {\small$\cdots\cdots$};
\draw (8.6,1) node {\small$\pm L_{\cO(-H)}\cO(E_r-H)$};
\end{tikzpicture}
\vspace{-1em}
\caption{Admissible paths and corresponding $K$-classes. All paths are lines parallel to each other, with end-phase $\phi>0$ small.}
\label{figure5}
\end{figure}

\begin{proof}[Proof of Theorem \ref{thm-GammaIIforXr}]
    
  Propositions \ref{prop: assumption1}, \ref{prop: assumption2}, \ref{prop: assumption3a} and \ref{prop: assumption3b} verify that  all assumptions of Theorem \ref{thm:strategy} hold. Hence, Gamma conjecture II holds for $X_r$ by Theorem  \ref{thm:strategy}.  
\end{proof}

\subsection{Corollary for  blow-ups of $\bP^2$}

Let $S_r$ be an $r$-fold blow-up of $\bP^2$, i.e., $S_r$ is obtained by $S_r\xrightarrow{b_r}S_{r-1}\xrightarrow{b_{r-1}}\cdots\xrightarrow{b_2}S_1\xrightarrow{b_1}\bP^2$, where each $b_i$ is the blow-up at one point. In general, $S_r$ need not be Fano.
\begin{thm}
    Gamma conjecture II holds for $S_r$ with $1\le r\le 8$.
\end{thm}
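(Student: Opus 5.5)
The plan is to reduce the statement for $S_r$ to Theorem \ref{thm-GammaIIforXr} by deformation invariance, on both sides of the conjecture.

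\emph{Step 1: deform $S_r$ to $X_r$.} Any $r$-fold iterated blow-up $S_r$ of $\bP^2$ with $r\le 8$ is deformation equivalent to $X_r$. The configuration of (possibly infinitely near) blown-up points can be moved in a connected family to $r$ general points, giving a smooth projective family $\mathcal{S}\to\Delta$ over a connected base, with one fiber $S_r$ and one fiber $X_r$. I would take the family over a Zariski open subset of the iterated-blow-up parameter space (the Hilbert scheme of the configurations), which is irreducible. The family is topologically locally trivial, so parallel transport identifies $H^*(S_r)$ with $H^*(X_r)$ as graded rings with Chern classes. In particular it matches $c_1$, $\widehat{\Gamma}$, Poincar\'e pairing and $\mu$. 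It also identifies $K(S_r)\cong K(X_r)$ compatibly with $\widetilde{\ch}$ and $\chi$.

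\emph{Step 2: identify the quantum D-modules.} By deformation invariance of Gromov--Witten invariants, the potentials $\mathcal{F}$ agree under this identification. Effective classes may differ, since $S_r$ can carry $(-2)$-curves. However, the invariants in classes that are not effective on $X_r$ vanish by deformation invariance. The potentials therefore coincide as power series in a common coordinate system, and convergence, $B$, $B_{ss}$ and the Dubrovin connection $\nabla$ are identified. The operator $L(\bft,z)$, hence $\mathcal{Z}$ and $\mathcal{Z}^K$, transport accordingly. It follows that the AEFS at any semisimple $\bft$ for a phase $\phi$ corresponds on both sides, and semisimplicity of $QH_{\rm big}(S_r)$ follows from that of $X_r$. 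One subtle point is the choice of nef basis: a nef basis for $X_r$ may fail to be nef on $S_r$. I would handle this by noting that convergence near the large radius limit is needed only for the statement on $X_r$, and that $B$ is defined intrinsically by the transported series.

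\emph{Step 3: transport exceptional collections.} Theorem \ref{thm-GammaIIforXr} gives, for each admissible phase, a full exceptional collection on $X_r$ whose $\mathcal{Z}^K$-images form the AEFS. We need a full exceptional collection on $S_r$ with the same $K$-classes, up to sign. Here I would use that exceptional collections deform in smooth families of rational surfaces: exceptional objects on del Pezzo-type surfaces are shifts of sheaves and deform uniquely, and fullness is preserved (cf.\ Hille--Perling and the augmentation construction). A more concrete route is available. The collection produced in Propositions \ref{prop: assumption3a} and \ref{prop: assumption3b} is $L_{\cO(-H)}\cO(E_k-H),\cO(-H),\cO,\cO(H)$, and the augmented collection $\cO(-H),\cO(E_r-H),\dots,\cO(E_1-H),\cO,\cO(H)$ exists on any iterated blow-up $S_r$ by Orlov's blow-up formula, with $E_i$ the total transforms. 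Its classes match those on $X_r$ under Step 1. Since Gamma II is independent of the semisimple point and of the phase, and braid group mutations (as in the proof of Theorem \ref{thm:strategy}) act identically on both sides, exhibiting this single collection at one point and one phase suffices.

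\emph{Main obstacle.} The delicate point is Step 2: showing that the identification of quantum cohomologies is canonical and compatible with the nef-basis/large-radius conventions used to define $B$ and $\mathcal{Z}$ when $S_r$ is not Fano. I expect that care with the divisor axiom and with the vanishing of non-$X_r$-effective invariants resolves this.
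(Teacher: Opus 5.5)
Your proof is correct, but its decisive step differs from the paper's.

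The paper treats Theorem~\ref{thm-GammaIIforXr} as a black box. Deformation invariance only transports the lattice-level conclusion: at every $\bft_0\in B_{ss}(S_r)$ and admissible phase, the AEFS is $\mathcal{Z}^K$ of an exceptional basis $\{V_i\}$ of $(K(S_r),\chi)$. The paper then lifts this basis to an honest full exceptional collection. It takes any full exceptional collection on $S_r$ (Orlov) and invokes Krah's theorem that two exceptional bases of $K(S_r)$ differ by mutations and sign changes.

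You instead reopen the $X_r$ proof. The collection realizing the AEFS at $(\bar{\bft}^{(2)},\pi/8)$ is the left mutation of the augmentation $\cO(-H),\cO(E_r-H),\dots,\cO(E_1-H),\cO,\cO(H)$, and it exists verbatim on every iterated blow-up. With $E_i$ the total transforms it is exceptional, because $p_j$ can be infinitely near $p_i$ only when $j>i$. Hence $E_j-E_i$ is not effective for $j>i$, while $H^2(\cO(E_j-E_i))=0$ and $\chi(\cO(E_j-E_i))=0$. Fullness follows from Orlov's formula. The propagation argument in the proof of Theorem~\ref{thm:strategy} then runs on $S_r$ unchanged. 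It only uses the small quantum $D$-module, which is literally identified with that of $X_r$, and the braid group action on full exceptional collections.

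Your route avoids the external lattice-theoretic transitivity result and is more elementary. The paper's route is shorter, needs no knowledge of how Theorem~\ref{thm-GammaIIforXr} was proved, and handles all points and phases at once.

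Two caveats. First, discard your initial suggestion in Step~3. Specializing exceptional collections from the general fibre $X_r$ to the special fibre $S_r$ is not automatic: Ext-vanishing can fail there because cohomology is only upper semicontinuous. Also, $S_r$ need not be weak del Pezzo, so exceptional objects on it need not be shifts of sheaves. Second, Step~2 is not the real obstacle; the paper handles it in one line by deformation invariance. The substance is the lift in Step~3, which your concrete augmentation argument does supply.
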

\begin{proof}
    Note that $S_r$  can be deformed to $X_r$. By deformation invariance of Gromov-Witten invariants, Theorem \ref{thm-GammaIIforXr} implies that, fixing $\bft_0\in B_{ss}(S_r)$ and admissible phase $\theta\in\bR$ at $\bft_0$, there exist  $V_1,\dots,V_s\in K(S_r)$, such that  $\{\mathcal{Z}^K(V_i)\}$ is an AEFS near $\bft_0$ associated to phase $\theta$,  and $\{V_i\}$ is  {an exceptional basis} in the pseudolattice $(K(S_r),\chi)$, i.e., it is a $\bZ$-basis of $K(S_r)$ with $\chi(V_i,V_i)=1$ and $\chi(V_j,V_i)=0$ whenever $i<j$.  Note that $\mathcal{D}^b(S_r)$ admits full exceptional collections by Orlov, and let $\{E_i\}$ be such a collection.  By \cite[Theorem 1.2 (ii)]{Kra24},  the two exceptional bases $\{V_i\}$ and $\{E_i\}$ in $(K(S_r),\chi)$ are related by mutations and sign changes, implying that $\{V_i\}$ arises from a full exceptional collection in $\mathcal{D}^b(S_r)$. This finishes the proof.
\end{proof}

\section{Condition ${\rm (SR)}$ for del Pezzo surfaces}\label{section-condition*forXr}

This section is devoted to prove Proposition \ref{prop-main-simple rightmost}. 

\subsection{Condition $(\mathrm{SR})$ via Perron--Frobenius theorem}\label{subsection-condition*forXr}
A nonnegative square matrix $M$ is \textbf{primitive} if $M^m$ is positive for some $m\in \mathbb{Z}_{\geq 1}$ (see \cite[Definition 1.1]{Sen81}). We use the following standard form of the Perron--Frobenius theorem (see \cite[Theorem 1.1]{Sen81}). 
\begin{prop}\label{prop-PFforprimitive}
    Let $M$ be a primitive matrix. Then $M$ has a positive simple eigenvalue $u$ such that for any other eigenvalue $u'$, we have $u>|u'|$.
    
    %Then the spectral radius 
    %\[\max\{|u| \,\big|\, u\in\bC \text{ is an eigenvalue of }A\}\] 
    %itself is an eigenvalue of $A$ with  multiplicity one.
\end{prop}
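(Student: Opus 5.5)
This is the classical Perron--Frobenius theorem. The plan is to prove it directly via the Collatz--Wielandt variational characterization. Throughout, let $M$ be an $n\times n$ nonnegative matrix with $M^m$ entrywise positive. For vectors, write $x\ge 0$ for entrywise nonnegative and $x>0$ for entrywise positive, and $|y|$ for the entrywise modulus.

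\emph{Step 1 (existence of a positive eigenvalue with positive eigenvector).} For $x\ge 0$, $x\neq 0$, set $r(x):=\min_{i:\,x_i>0}(Mx)_i/x_i$, so that $r(x)$ is the largest $\rho$ with $Mx\ge \rho x$. The function $r$ is only upper semicontinuous on the simplex $\Delta=\{x\ge0,\sum x_i=1\}$. I would therefore maximize it on the compact set $M^m\Delta$ (normalized), which consists of positive vectors and on which $r$ is continuous. Since $M^m x$ commutes with $M$, we have $r(M^mx)\ge r(x)$, so $u:=\sup_\Delta r$ is attained at some $x>0$.

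If $Mx-ux\ge0$ were nonzero, then applying $M^m$ would give $M(M^mx)>u\,M^mx$, hence $r(M^mx)>u$, a contradiction. So $Mx=ux$ with $x>0$. Moreover $u>0$: $M$ has no zero row, since otherwise $M^m$ would have a zero row.

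\emph{Step 2 (dominance $|u'|\le u$).} Let $My=u'y$ with $y\ne0$. The triangle inequality gives $|u'|\,|y|\le M|y|$, so $|u'|\le r(|y|)\le u$.

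\emph{Step 3 (strictness for $u'\neq u$).} This is the delicate step, because $M$ itself need not be positive, so the equality case must be run through the positive matrix $M^m$. Suppose $|u'|=u$. Then $|y|$ is a maximizer of $r$, and by the argument of Step 1 it satisfies $M|y|=u|y|$ and $|y|>0$. Hence
\[
|M^m y|=u^m|y|=M^m|y|.
\]
Since every entry of $M^m$ is positive, equality in the triangle inequality forces all coordinates of $y$ to share a common phase, i.e. $y=e^{\mathbf{i}\theta}|y|$. Then $u'y=My=e^{\mathbf{i}\theta}u|y|=uy$, so $u'=u$.

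\emph{Step 4 (simplicity).} For geometric simplicity, suppose $u$ has two independent real eigenvectors (real ones exist since $M$ and $u$ are real). A suitable real combination $z\ge0$, $z\ne0$, has some zero coordinate. But $z=u^{-m}M^mz>0$, a contradiction. For algebraic simplicity, apply Steps 1--3 to $M^T$, which is also primitive, to obtain $w>0$ with $w^TM=uw^T$. If $(M-u)z=x$ with $x>0$ the Perron vector, then
\[
w^Tx=w^T(M-u)z=0,
\]
which is impossible since $w>0$ and $x>0$. Hence $u$ is a simple eigenvalue, and combined with Step 3 every other eigenvalue $u'$ satisfies $|u'|<u$.

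The main obstacle is Step 3: extracting strict dominance from primitivity rather than mere irreducibility. The equality-in-the-triangle-inequality argument must be applied to $M^m$ and not to $M$.
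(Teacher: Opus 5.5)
The paper does not prove this statement. It quotes the Perron--Frobenius theorem for primitive matrices from Seneta's book \cite[Theorem 1.1]{Sen81}. It then uses only the following consequence: for the primitive matrix $M_r^{(a_r)}+c_rI_{r+3}$ (primitive because its square is positive), the Perron root $u$ gives the simple eigenvalue $u-c_r$ of $\hat c_1(\bft^{(2)})$, and for every other eigenvalue $u'$ we have $\Re(u'-c_r)\le |u'|-c_r<u-c_r$, so $u-c_r$ is the simple rightmost eigenvalue.

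Your proposal is a correct, self-contained proof along classical Wielandt/Collatz--Wielandt lines, so it takes a different route from the paper's citation:
\begin{itemize}
\item Maximizing $r$ over the normalized image $M^m\Delta$ is legitimate, because $r(M^mx)\ge r(x)$ and $r$ is continuous on positive vectors.
\item The trick of applying $M^m$ to a nonzero vector $Mx-ux\ge0$ correctly forces $Mx=ux$.
\item Equality in the triangle inequality for the positive matrix $M^m$ correctly yields strict dominance $|u'|<u$ for $u'\ne u$.
\item Simplicity follows from the zero-coordinate argument (geometric) together with the left Perron vector annihilating any Jordan chain (algebraic).
\end{itemize}

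Two small points to tighten:
\begin{itemize}
\item When you apply Steps 1--3 to $M^{T}$, say why its Perron root is the same $u$. For example, $u_T\,w^{T}x=w^{T}Mx=u\,w^{T}x$ with $w^{T}x>0$; alternatively, both Perron roots equal the spectral radius of the common spectrum.
\item ``$M^mx$ commutes with $M$'' should say that $M^m$ commutes with $M$ and preserves entrywise inequalities. That is what gives $r(M^mx)\ge r(x)$.
\end{itemize}
The detour through $M^m\Delta$ is optional: $r$ is upper semicontinuous on $\Delta$, so its supremum is attained there. Any maximizer is then positive by $x=u^{-m}M^mx$ once $Mx=ux$ is known.

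As for the trade-off, the citation is brief and supplies the full theorem, including positive left and right eigenvectors. Your route makes the paper self-contained and shows that the only input needed is $M\ge0$ with $M^m>0$. Your Jordan-chain argument for algebraic simplicity is also shorter than the characteristic-polynomial or adjugate arguments often used in textbooks.
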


To prove Proposition \ref{prop-main-simple rightmost}, we consider the matrix   $M^{(a)}_r=\left((M_r^{(a)})_{ij}\right)_{1\le i,j\le r+3}$   of $\hat{c}_1(\bft^{(2)})$ with respect to the ordered basis $[\mathbf{1},aH-\sum_iE_i,E_1,...,E_r,[pt]]$, where $a>0$. 
The dual basis with respect to Poincar\'e pairing is given by
\[
\mathbf{1}^\vee=[pt],\quad (aH-\sum_iE_i)^\vee={H\over a},\quad E_i^\vee={H\over a}-E_i,\quad [pt]^\vee=\mathbf{1}.
\]
Then each entry $(M_r^{(a)})_{ij}$ is a polynomial in $q,q_1,...,q_r$, and can be read off as a summation of three-point Gromov-Witten invariants over $\Eff(X_r)$. For instance, 
\[
   (M_r^{(a)})_{3 2}=\sum_{\bfd} \langle c_1,E_1, (aH-\sum_iE_i)^\vee\rangle_\bfd^{X_r}\cdot\bfq^{\bfd}.\quad(\text{see \eqref{notation-q^d} for }\bfq^{\bfd})
\]

\begin{exam}\label{example-r=2}
By Example \ref{example-r=1} and a change of bases, we have 
   \begin{equation*}
M_1^{(a)}=\begin{bmatrix}
            0& (2a-2)q& 2q&3qq_1\\
            \frac{3}{a}& 0& 0 &\frac{2q}{a}\\
            -1+\frac{3}{a}& q_1& -q_1 & -2q+\frac{2q}{a}\\
            0& -1+3a& 1& 0
            
        \end{bmatrix},
    \end{equation*} 
and the matrix $M_2^{(a)}$ is given by
\begin{align*}
\begin{bmatrix}
            0& (2a-2)q(q_1+q_2)& 2qq_2& 2qq_1&3qq_1q_2\\
            \frac{3}{a}& q-\frac{2q}{a}& \frac{q}{a}& \frac{q}{a} &2\frac{q}{a}(q_1+q_2)\\[0.1cm]
            -1+\frac{3}{a}& (2-a)(q-\frac{q}{a})+q_1& -q+\frac{q}{a}-q_1& -q+\frac{q}{a} & -2qq_2+2\frac{q}{a}(q_1+q_2)\\[0.1cm]
            -1+\frac{3}{a}& (2-a)(q-\frac{q}{a})+q_2& -q+\frac{q}{a}& -q+\frac{q}{a}-q_2&-2qq_1+2\frac{q}{a}(q_1+q_2)\\[0.1cm]
            0& -2+3a& 1& 1&0
        \end{bmatrix}.
\end{align*}
    
\end{exam}

\par

We first assume the following proposition, and leave its proof to next subsection.
\begin{prop}\label{prop-all positive entries but 1}
For $3\le r\le 8$, there exist $\delta_r\in (0,\frac{1}{2}]$, $a_r>0$ and $\epsilon_r>0$, such that, 
\begin{equation*}
    \text{if either $q\in [\epsilon_r,1]$, $q_1=\dots=q_r=1$,} \quad\text{or $q\in (0,\epsilon_r)$, $q_1,\dots,q_r\in [1,1+\delta_r]$,}
\end{equation*} then every off-diagonal entry of $M^{(a_r)}_r$ is positive except $(M_r^{(a_r)})_{r+3,1}=0$.
\end{prop}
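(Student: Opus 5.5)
We will write every entry of $M^{(a)}_r$ explicitly as a polynomial in $q,q_1,\dots,q_r$ with coefficients affine in $a$ and $1/a$, by changing basis from the matrix $M_r$ of Lemma \ref{proposition-matrix about q^0 and q^1}, exactly as Example \ref{example-r=2} does for $r=1,2$. Concretely, $(M^{(a)}_r)_{ij}=\sum_{\bfd}\langle c_1,e_j,e_i^\vee\rangle_\bfd\,\bfq^\bfd$ (plus the classical term for $\bfd=0$), where $e_1=\mathbf{1}$, $e_2=aH-\sum_iE_i$, $e_{2+k}=E_k$, $e_{r+3}=[pt]$. By the divisor axiom, each such term equals $(c_1\cdot\bfd)(e_j\cdot\bfd)\langle e_i^\vee\rangle$-type invariants whenever $e_j$ is a divisor. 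The degree axiom cuts the sum down to classes with $c_1\cdot\bfd\le 3$ (since $\dim X_r=2$), so only finitely many curve classes contribute. These are lines $E_i$, conics $H-E_i-E_j$, $H-E_i$, $H$, and higher-degree exceptional and ruling classes such as $2H-\sum_{5}E_i$ and $3H-2E_1-\sum E_j$. Their invariants are nonnegative integers by \cite{GP98} and \eqref{eq--invariants}.

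The next step is to sort the off-diagonal entries by sign. Column $1$ (multiplication of $c_1$ into $\mathbf{1}$) is classical. It gives $(M^{(a)}_r)_{2,1}=3/a>0$ and $(M^{(a)}_r)_{2+k,1}=-1+3/a$, which is positive if $a<3$, and the forced zero $(M^{(a)}_r)_{r+3,1}=0$. The row $r+3$ (coefficients of $\mathbf{1}$, i.e. pairing with $[pt]^\vee=\mathbf{1}$) has entries $c_1\cdot e_j$, namely $3a-r$ and $1$, which are positive if $a>r/3$. Columns involving $[pt]$ and row $1$ consist of sums of invariants with positive coefficients, hence are positive, as soon as at least one contributing class exists; for instance the class $H$ with $\langle [pt],[pt]\rangle_H=1$ gives $3q\prod q_i$. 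The delicate entries are the ones in the divisor block, rows and columns $2,\dots,r+2$. For example, $(M^{(a)}_r)_{2+k,2}$ has the form $q_k\cdot(\text{contribution of }E_k)$ plus $q$-terms with coefficients like $(2-a)(1-1/a)$, and $(M^{(a)}_r)_{2+k,2+l}$ for $k\neq l$ is $q(-1+1/a)\cdot(\cdots)$ plus further terms. So we will choose $a=a_r\in(r/3,3)$; I would try $a_r$ close to $r/3$ but slightly larger, adjusted case by case, e.g. $a=1.1,\dots$. The aim is that the dominant $q$-free term (from the lines $E_k$, giving $+q_k$ with coefficient $e_2\cdot E_k=1$) beats the negative $q$-linear terms.

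We then split into the two regimes. For $q\in(0,\epsilon_r)$ and $q_i\in[1,1+\delta_r]$, each entry equals its $q^0$ part plus $O(q)$, uniformly in the $q_i$, by compactness. We check that every off-diagonal entry either has a positive $q^0$ part, which then dominates for $\epsilon_r$ small, or has zero $q^0$ part and a positive leading $q$-coefficient. The latter case requires the ratio of the leading $q$-coefficient to the higher-order terms to be controlled, using $\delta_r\le1/2$ to bound the products $\prod q_i$. For $q\in[\epsilon_r,1]$ with all $q_i=1$, the entries become polynomials in the single variable $q$ with explicit coefficients. Here we verify positivity on a compact interval by direct evaluation of finitely many explicit polynomials for each $r=3,\dots,8$ (where the two observations referred to as \ref{observation alpha} and \ref{observation beta} presumably organize which classes contribute to each entry).

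The main obstacle is the divisor block entries $(M^{(a)}_r)_{2+k,2+l}$ and $(M^{(a)}_r)_{2+k,2}$ in the regime $q\in[\epsilon_r,1]$ for large $r$ (especially $r=7,8$). There many classes of higher degree contribute with mixed-sign coefficients $(e_j\cdot\bfd)(e_i^\vee\cdot\bfd)$, since $E_k^\vee=H/a-E_k$ pairs negatively with classes containing $E_k$. So positivity is not termwise and depends on the choice of $a_r$. My first attempt would be to group contributions by the $W(E_r)$-orbit type of $\bfd$ and to use the symmetry among the $E_i$ to reduce to a few scalar inequalities in $(a,q)$. I would then pick $a_r$ to make these inequalities hold on $[0,1]$, falling back on explicit numerical bounds if necessary.
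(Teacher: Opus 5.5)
Your overall architecture matches the paper's. You correctly derive the constraints $r/3<a<3$ from the first column and the last row. You handle a small-$q$ regime by leading terms in $q$; this is essentially the paper's Proposition~\ref{proposition-range 2}, though you still owe the explicit leading coefficients. For example, $L_{b,j}=(\tfrac1a-1)\prod_{i\ne b,j}q_i+\tfrac1a\sum_{k\ne b,j}\prod_{i\ne b,k}q_i$ is positive near $q_i=1$ only if $a<r-1$, which already shrinks your range when $r=3$.

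The genuine gap is the other regime, $q\in[\epsilon_r,1]$ with $q_i=1$, which is the real content of the proposition. You only say it will be settled by ``direct evaluation'' or ``numerical bounds''. That presupposes the complete list of invariants $\langle\rangle_\bfd$, $\langle[pt]\rangle_\bfd$, $\langle[pt]^2\rangle_\bfd$ over all classes with $c_1(\bfd)\le 3$; for $r=8$ this is already the $240$ lines plus many more classes with $c_1(\bfd)=2,3$. Moreover, the structural claims meant to cut down the work are wrong.
\begin{itemize}
\item The $[pt]$-column is not termwise positive. We have $P^{(a)}_{[pt],j}=\sum_\bfd 2\bigl(\tfrac{H(\bfd)}{a}-E_j(\bfd)\bigr)\langle[pt]\rangle_\bfd\,\bfq^\bfd$, and $\bfd=H-E_j$ contributes $2(\tfrac1a-1)q\prod_{i\ne j}q_i<0$ for every $a>1$. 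So this column is one of the three delicate families, not an easy one.
\item The $(1,2)$ entry has coefficients $2\bigl((a-3)H(\bfd)+2\bigr)\langle[pt]\rangle_\bfd$, which are negative once $H(\bfd)>2/(3-a)$. With $a$ close to $r/3$ this happens for classes that actually contribute, e.g.\ $2H-E_1-\dots-E_4$ on $X_4$ whenever $a<2$.
\item The choice $a\approx r/3$ is degenerate for $P^{(a)}_{H,j}$. At $a=r/3$ its value at $q=q_1=\dots=q_r=1$ is exactly $0$: for $r=5$ the contributions of $E_j$, of the ten classes $H-E_k-E_l$, and of $2H-\sum E_i$ are $1$, $-\tfrac23$, $-\tfrac13$. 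So the $q^0$ term does not dominate with any margin.
\end{itemize}

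The missing idea is how to obtain coefficientwise positivity in $q$ without knowing the invariants. Grouping by $W(E_r)$-orbits cannot do this, because Cremona moves change $H(\bfd)$ and hence the power of $q$. The paper uses only the permutation symmetry of the $E_i$ on $\Eff_{d,N}(X_r)$, where the $H$-degree and the invariant are fixed. This gives $\sum_{\bfd\in\Eff_{d,N}}E_b(\bfd)=|\Eff_{d,N}|\tfrac{3d-1}{r}$ (Lemma~\ref{lemma-symmetrizedsum}), and $\tfrac{3d-2}{r}$ in the $\langle[pt]\rangle$ case. Cauchy--Schwarz then bounds the average of $E_b(\bfd)E_j(\bfd)$ by $(\tfrac{3d-1}{r})^2$. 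At $q_i=1$, the $q^d$-coefficients become:
\begin{itemize}
\item for $P_{H,j}$, a nonnegative multiple of $(ad-3d+1)(\tfrac{rd}{a}-3d+1)$;
\item for $P_{b,j}$, bounded below by a nonnegative multiple of $\tfrac{rd}{a}-3d+1$;
\item for $P_{[pt],j}$, a nonnegative multiple of $\tfrac{rd}{a}-3d+2$.
\end{itemize}
The arithmetic-genus bound $H(\bfd)\le d_{rk}$ on contributing classes (Lemma~\ref{lemma of table}) then allows the choice $a_r^*=\tfrac{rd_{r1}}{3d_{r1}-1}$. With it, all these factors, and $(a-3)d+2$ for the first row, are nonnegative for every relevant $d$. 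Strict positivity comes from the classes $E_j$, $H-E_1-E_2$, $H-E_1$ and $H$. Termwise nonnegativity of the first row requires $a\ge 3-2/d_{r2}$, so $a_r^*$ lies above $r/3$ by a definite amount (e.g.\ $a_4^*=2$ versus $4/3$), the opposite of your heuristic.
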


\vspace{0.1em}

\begin{proof}[Proof of Proposition \ref{prop-main-simple rightmost}] Let $I_{m}$ be the $m\times m$ identity matrix. By Proposition \ref{prop-PFforprimitive}, it suffices to find $\delta_r,a_r,c_r>0$ such that, when $0<q\le 1$ and $1\le q_i\le 1+\delta_r$, $M^{(a_r)}_r+c_rI_{r+3}$ is nonnegative and $(M^{(a_r)}_r+c_rI_{r+3})^2$ is positive.

For $r=1,2$, take $\delta_r={1\over 2},a_r=1,c_r=2$. Then from Example  \ref{example-r=2}, when $0<q\le 1$ and $1\le q_i\le{3\over 2}$ hold,  sign pattern matrices of $M_1^{(1)}+c_1I_4$ and $M_2^{(1)}+c_2I_5$ are
\begin{equation*}
    \begin{bmatrix}
        + &0& + &+\\
        +&+&0&+\\
        +&+&+&0\\
        0&+&+&+
    \end{bmatrix}\text{ and }\begin{bmatrix}
        +&0&+&+&+\\
        +&+&+&+&+\\
        +&+&+&0&+\\
        +&+&0&+&+\\
        0&+&+&+&+
    \end{bmatrix},
\end{equation*}
which imply that the square of them are positive.
\par

 For $3\le r\le 8$, take $a_r$ as in Proposition \ref{prop-all positive entries but 1}. Since $(M_r^{(a_r)})_{ij}\in\bR[q,q_1,\dots,q_r]$,  there exists $c_r>\!\!>0$ such that, when $q$ and $q_i$ satisfy the conditions in Proposition \ref{prop-all positive entries but 1},  all entries of $M^{(a_r)}_r+cI_{r+3}$ are positive, except for the $(r+3,1)$-entry, which is zero, and this sign pattern implies that $(M_r^{(a_r)}+c_rI_{r+3})^2$ is positive. In particular, this holds when $q\in[\epsilon_r,1]$ and $q_i=1$. Since $(M_r^{(a_r)}+c_rI_{r+3})^2$ being positive is an open condition on $q$ and $q_i$, and $[\epsilon_r,1 ]$ is compact, it follows that we may shrink \(\delta_r\) from Proposition \ref{prop-all positive entries but 1}, if necessary, to ensure that\((M_r^{(a_r)}+c_rI_{r+3})^2\) is positive for $q\in(0,1]$ and $q_i\in[1,1+\delta_r]$. 
\end{proof}

\subsection{ Positivity of off-diagonal entries} Note  $3\le r\le 8$ throughout this subsection but  Lemma \ref{lemma of table} (where the cases $r=1, 2$ are included for completeness).
We prove Proposition \ref{prop-all positive entries but 1} at the end of the subsection; its proof relies on a series of estimates based on the following two key observations for del Pezzo surfaces $X_r$.

\begin{enumerate}[label=(o\arabic*), ref=(o\arabic*)]
\item In the setting of $q_i=1$ and $q\in[0,1]$, there exists $a^*>0$ such that the off-diagonal entries of $M_r^{(a^*)}$ are polynomials in $q$ with \textbf{positive coefficients} (Proposition \ref{proposition-range 1}).\label{observation alpha}

\item For $q$ sufficiently small, estimating \textbf{only the terms of degree $0$ and $1$} in $q$ ensures the necessary positivity, provided $q_i$ remain close to $1$ (Proposition \ref{proposition-range 2}).\label{observation beta}

\end{enumerate}

Denote $(M_r^*)_{ij}:=(M_r^{(a_r^*)})_{ij}$ throughout. 
While the remaining entries are easier to treat as in the proof of Proposition \ref{prop-all positive entries but 1}, the following three types require detailed estimates:

\begin{align*}
    P_{H,j}^{(a_r)}(q,q_i)&:=(M_r^{(a_r)})_{2,j+2}=\sum_{\bfd}\langle c_1,a_rH-\sum E_i, E_j^\vee\rangle^{X_r}_\bfd\, \bfq^{\bfd},\\
    P_{b,j}^{(a_r)}(q,q_i)&:=(M_r^{(a_r)})_{b+2,j+2}=\sum_{\bfd}\langle c_1,E_b, E_j^\vee\rangle^{X_r}_\bfd\, \bfq^{\bfd},\\
    P_{[pt],j}^{(a_r)}(q,q_i)&:=(M_r^{(a_r)})_{r+3,j+2}=\sum_{\bfd}\langle c_1,[pt], E_j^\vee\rangle^{X_r}_\bfd\, \bfq^{\bfd},
\end{align*}
where $1\leq b, j\leq r$ and $ b\neq j$. We prove positivities for $P^*_\bullet:=P^{(a_r^*)}_\bullet$ first at $q_i=1$ in Proposition \ref{proposition-range 1} and then for $q$ small with $q_i$ close to $1$ in Proposition \ref{proposition-range 2}.

To state and prove Proposition \ref{proposition-range 1} and \ref{proposition-range 2}, we need the following Lemma \ref{lemma of table}. Set
 \[
d_{rk}:=\max\{H(\bfd)|\bfd\in\Eff(X_r)\text{ with }\left<\tau_0([pt])^{k-1}\right>^{X_r}_{\bfd}\neq0\},\quad k=1,2,
 \]
 where $H(\bfd)=\int_{\bfd}H\in \mathbb{Z}$.
  Note that $d_{rk}$ is well-defined since 
  there are only finitely many effective curve classes $\bfd$ satisfying $\left<\tau_0([pt])^{k-1}\right>^{X_r}_{\bfd}\neq0$. 
 
  \begin{lemm}\label{lemma of table}
   The values of $d_{r1}$, $d_{r2}$ and $a_r^*:=\frac{rd_{r1}}{3d_{r1}-1}$  are as listed in the following Table. 
\begin{center}
\begin{table}[h]
\centering
    \caption{$d_{r1}$, $d_{r2}$ and $a_r^*$ for each $X_r$}\label{table}
    \begin{tabular}{p{1cm} | p{1cm} p{1cm} p{1cm}  p{1cm} p{1cm} p{1cm} p{1cm} p{1cm} }
  \toprule
  $r$ & 1 & 2 &3 &4 &5 &6 &7 &8\\
  \hline
  $d_{r1}$ & 0& 1 &1 &1 &2 &2&3 &6 \\ 
  
  $d_{r2}$ & 1 &1 &1 &2 &2 &3 &5 &11  \\
 
  $a_r^*$ & 0 &1 &3/2 & 2 &2 &12/5 & 21/8 & 48/17\\  
  \bottomrule
\end{tabular}
\vspace{-3em}
\end{table}
\end{center}
  \end{lemm}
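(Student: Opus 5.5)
The plan is to read off $d_{r1}$ and $d_{r2}$ from the dimension axiom together with the enumerative description of genus-zero invariants of $X_r$ in \cite{GP98}. The last row then follows by arithmetic from $a_r^*=\frac{rd_{r1}}{3d_{r1}-1}$; for example $r=8$ gives $48/17$ and $r=7$ gives $21/8$.

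\textbf{Step 1 (dimension axiom).} Write $\bfd=dH-\sum_i m_iE_i$, so that $c_1(\bfd)=3d-\sum_i m_i$ and $H(\bfd)=d$. The virtual dimension of $\overline{M}_{0,k-1}(X_r,\bfd)$ is $c_1(\bfd)-1+(k-1)$, while the insertion $\tau_0([pt])^{k-1}$ has degree $2(k-1)$. Hence a nonzero invariant $\langle\tau_0([pt])^{k-1}\rangle_\bfd$ forces $c_1(\bfd)=k$. So $d_{r1}$ is a maximum over classes with $c_1\cdot\bfd=1$, and $d_{r2}$ is a maximum over classes with $c_1\cdot\bfd=2$.

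\textbf{Step 2 (which classes contribute).} By \cite[Section 4.1]{GP98}, for $r$ general points these invariants are nonnegative and enumerative. They count irreducible rational curves in class $\bfd$ through $k-1$ general points, apart from the known contributions of the exceptional curves. So a nonzero invariant requires an irreducible rational curve $C$ in class $\bfd$. Since the arithmetic genus of $C$ is nonnegative, adjunction gives $\bfd^2\ge c_1\cdot\bfd-2=k-2$. Conversely:
\begin{itemize}
\item for $k=1$, the $(-1)$-curves ($\bfd^2=-1$) contribute $\langle\rangle_\bfd=1$;
\item for $k=2$, the conic-bundle fibre classes ($\bfd^2=0$) contribute $\langle[pt]\rangle_\bfd=1$.
\end{itemize}
Both facts follow from \eqref{eq--invariants} together with invariance of GW invariants under the Weyl group $W(E_r)$, which acts transitively on each of these sets of classes for general points. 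I also need to exclude classes with $\bfd^2\ge k-1$ when $k=1,2$. For this I would use the Hodge index inequality for $c_1$ (note $c_1^2=9-r>0$): $(c_1\cdot\bfd)^2\ge c_1^2\,\bfd^2$. With $c_1\cdot\bfd=k\le 2$ this gives $\bfd^2\le k^2/(9-r)$. When $r=8$ that bound is not small enough, so there I would additionally use $p_a\ge0$ combined with the bounds of Step 3.

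\textbf{Step 3 (maximizing $d$).} Under the constraints $\sum_i m_i=3d-k$ and $\sum_i m_i^2=d^2+(2-k)-2+k$, that is $\sum_i m_i^2=d^2-(k-2)-2+k$, Cauchy--Schwarz $(\sum_i m_i)^2\le r\sum_i m_i^2$ gives the quadratic inequality $(3d-k)^2\le r\bigl(d^2+\text{const}\bigr)$. This bounds $d$ above. For each $r$ I then exhibit a class attaining the bound:
\begin{itemize}
\item $k=1$: $E_1$; $H-E_1-E_2$; the conic $2H-\sum_{i\le5}E_i$; the cubic $3H-2E_1-\sum_{i=2}^7E_i$; and $6H-2\sum_{i\le 8}E_i+E_8$, that is $6H-2(E_1+\dots+E_7)-3E_8$.
\item $k=2$: $H-E_1$; $2H-\sum_{i\le4}E_i$; $3H-2E_1-\sum_{i=2}^5E_i$; $5H-2\sum_{i\le6}E_i-E_7$; and $11H-4\sum_{i\le7}E_i-3E_8$.
\end{itemize}
Each of these lies in the relevant $W(E_r)$-orbit, so by Step 2 its invariant is $1$. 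This yields the table.

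The main obstacle is Step 2: proving that no class other than the $(-1)$-classes (for $k=1$) or the fibre classes (for $k=2$) carries a nonzero invariant, and that the listed extremal classes have nonzero invariants. I would try to quote \cite{GP98} directly for enumerativity and the Weyl-group symmetry, reducing everything to the finite lattice computation in Step 3. If that citation is insufficient, I would verify the finitely many candidate classes using the Kontsevich-type recursion for del Pezzo surfaces in \cite{GP98}.
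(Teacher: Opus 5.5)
Your route is the paper's: dimension constraint $c_1\cdot\bfd=k$, then $p_a\ge 0$ from the enumerativity results of G\"ottsche--Pandharipande, then Cauchy--Schwarz for the upper bound, then Cremona moves for the extremal classes. However, Step~2 as you state it is false and should be dropped. It is not true that only $(-1)$-classes contribute for $k=1$ and only fibre classes for $k=2$. On $X_8$ the class $-K_{X_8}=3H-\sum_{i=1}^8E_i$ has $c_1\cdot\bfd=1$, $\bfd^2=1$ and $\langle\,\rangle_\bfd=12$ (the nodal cubics through the eight points). On $X_7$ the class $-K_{X_7}=3H-\sum_{i=1}^7E_i$ has $c_1\cdot\bfd=2$, $\bfd^2=2$ and $\langle[pt]\rangle_\bfd=12$. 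So the Hodge-index exclusion already breaks at $r=7$, $k=2$, not only at $r=8$. Nor can $p_a\ge 0$ rescue it, since both classes have $p_a=1$. The exclusion is unnecessary anyway. The inequality $p_a\ge 0$ gives $\sum_i m_i^2=d^2-\bfd^2\le d^2+2-k$, and Cauchy--Schwarz then yields $(3d-k)^2\le r(d^2+2-k)$. This is the same inequality the equality case would give, and it is exactly the paper's argument. Your displayed constraint is also garbled: both expressions you wrote simplify to $\sum_i m_i^2=d^2$. For $k=1$, $r=8$ that would give $d\le 5$ and wrongly rule out the sextic.

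Second, Step~3 does not close at $r=8$, $k=1$. There the inequality reads $d^2-6d-7\le 0$, i.e.\ $d\le 7$, and no class attains $7$, so exhibiting a degree-$6$ class is not enough. You need an integrality step. If $d=7$ then $\sum_{i=1}^8 m_i=20$, and the minimum of $\sum_i m_i^2$ over integer $8$-tuples with this sum is $4\cdot 9+4\cdot 4=52>50=d^2+1$; hence $d_{81}\le 6$. The paper's remark that ``the case $k=1$ is similar'' tacitly needs the same refinement. In every other case $(r,k)$, the floor of the real bound already equals the table entry.

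Two of your witnesses are also miscomputed:
\begin{itemize}
\item $3H-2E_1-\sum_{i=2}^5E_i$ has $c_1\cdot\bfd=3$; the $r=6$, $k=2$ class should be $3H-2E_1-\sum_{i=2}^6E_i$.
\item $6H-2\sum_{i\le 8}E_i+E_8$ equals $6H-2\sum_{i\le 7}E_i-E_8$, which is not the correct class $6H-2\sum_{i\le 7}E_i-3E_8$.
\end{itemize}
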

  \begin{proof}
    By \cite[Section 5.2]{GP98}, any contributing curve class $\bfd$  must have non-negative arithmetic genus, i.e. $(H(\bfd)-1)(H(\bfd)-2)-\suml_{i=1}^rE_i(\bfd)(E_i(\bfd)-1)\ge0$. Using dimension constraint for equality and  Cauchy-Schwarz for inequality, we have
\begin{equation*}
    (3H(\bfd)-k)+\sum_{i=1}^rE_i(\bfd)(E_i(\bfd)-1)=\sum_{i=1}^rE_i(\bfd)^2\geq{1\over r}(\sum_{i=1}^rE_i(\bfd))^2={1\over r}\left(3H(\bfd)-k\right)^2.
\end{equation*}
Combining the two gives $\frac{1}{r}(3H(\bfd)-k)^2-(3H(\bfd)-k)
\le (H(\bfd)-1)(H(\bfd)-2)$. This yields the quadratic constraint in $d_{rk}$ 
   \[
   9d_{rk}^2-6d_{rk}k+k^2-r(3d_{rk}-k)\leq rd_{rk}^2-3rd_{rk}+2r.
   \]
For $k=2$, the constraint gives  
\[
d_{r2}\le {6+2\sqrt{r}\over9-r}\Longrightarrow d_{r2}\le \lfloor{6+2\sqrt{r}\over9-r}\rfloor\quad(\text{use }d_{r2}\in\mathbb{Z}_{\ge0}).
\] 
One can check that $\lfloor{6+2\sqrt{r}\over9-r}\rfloor$ is exactly the number listed in the third row for $r=1,\dots,8$ (e.g. $\lfloor{6+2\sqrt{r}\over9-r}\rfloor=11$ when $r=8$). To prove $d_{r2}=\lfloor{6+2\sqrt{r}\over9-r}\rfloor$, it suffices to find $\bfd\in\Eff(X_r)$ satisfying $\left<[pt]\right>^{X_r}_{\bfd_r}\neq0$ and $H(\bfd)=\lfloor{6+2\sqrt{r}\over9-r}\rfloor$. Note that $\left<[pt]\right>^{X_r}_{H-E_1}\neq0$, and we can apply Cremona transformation (see \cite[Section 5.1]{GP98}) repeatedly to $H-E_1$ to find such $\bfd\in\Eff(X_r)$. For instance $\bfd=11H-\suml_{i=1}^74E_i-3E_8$ when $r=8$. This proves the case $k=2$. The case $k=1$ is similar.
  % \textcolor{red}{The proof needs more details?}
  \end{proof}

 For $d,N\in\bZ_{\ge1}$,  let $\Eff_{d,N}(X_r)$ (resp.  $\Eff_{d,N}^{[pt]}(X_r)$) denote the subset of $\Eff(X_r)$ consisting of effective classes $\bfd$ satisfying $H(\bfd)=d$ and $\<\>^{X_r}_{\bfd}=N$ (resp. $\<[pt]\>^{X_r}_{\bfd}=N$).\par
 The following lemma relies on the symmetry of the exceptional divisors $E_i$ and plays a central role throughout our subsequent arguments.

\begin{lemm}\label{lemma-symmetrizedsum}
For $1\le b\le r$, we have $\sum_{\bfd\in\Eff_{d,N}(X_r)}E_b(\bfd)=\sum_{\bfd\in\Eff_{d,N}(X_r)}{3d-1\over r}$ and $\sum_{\bfd\in\Eff_{d,N}^{[pt]}(X_r)}E_b(\bfd)=\sum_{\bfd\in\Eff_{d,N}^{[pt]}(X_r)}{3d-2\over r}$
\end{lemm}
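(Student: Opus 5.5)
The plan is to use the symmetry of the exceptional divisors, then apply the dimension constraint to the symmetrized sum. The Weyl-type symmetry we need is the permutation action of the symmetric group $\mathfrak{S}_r$ on $\{E_1,\dots,E_r\}$, fixing $H$. Since $X_r$ is the blow-up of $\bP^2$ at $r$ general points, a permutation $\sigma$ of the blown-up points is realized by monodromy in the family of configurations of general points. By deformation invariance of Gromov--Witten invariants, the induced lattice automorphism $\sigma_*$ of $H_2(X_r,\bZ)$ (sending $E_i\mapsto E_{\sigma(i)}$ and fixing $H$) preserves $\Eff(X_r)$ as far as classes with nonzero invariants are concerned. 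It also satisfies $\langle [pt]^{k}\rangle_{\sigma_*\bfd}^{X_r}=\langle [pt]^{k}\rangle_{\bfd}^{X_r}$, since $\sigma^*$ fixes $[pt]$. Consequently $\sigma_*$ restricts to a bijection of $\Eff_{d,N}(X_r)$ and of $\Eff^{[pt]}_{d,N}(X_r)$, because $H(\sigma_*\bfd)=H(\bfd)$.

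With this bijection in hand, for the transposition $\sigma=(b\,b')$ we get
\[
\sum_{\bfd\in\Eff_{d,N}(X_r)}E_b(\bfd)=\sum_{\bfd\in\Eff_{d,N}(X_r)}E_b(\sigma_*\bfd)=\sum_{\bfd\in\Eff_{d,N}(X_r)}E_{b'}(\bfd).
\]
So the sum is independent of $b$. It therefore equals $\frac1r\sum_{\bfd}\sum_{i=1}^rE_i(\bfd)$. The same argument applies to $\Eff^{[pt]}_{d,N}(X_r)$.

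Next, for each $\bfd$ with a nonzero invariant, the dimension constraint pins down $\sum_iE_i(\bfd)$. Here $c_1(X_r)=3H-\sum_iE_i$, so $c_1(\bfd)=3d-\sum_iE_i(\bfd)$. The virtual dimension of $\overline{M}_{0,k}(X_r,\bfd)$ is $\dim X_r-3+c_1(\bfd)+k=c_1(\bfd)-1+k$. With $k$ point insertions of degree $2$, the constraint $c_1(\bfd)-1+k=2k$ gives $c_1(\bfd)=k+1$. Hence $\sum_iE_i(\bfd)=3d-1$ for the unpointed invariants ($N=\langle\rangle_\bfd\ne0$ since $N\ge1$), and $\sum_iE_i(\bfd)=3d-2$ for one point insertion. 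Substituting into the symmetrized sum gives exactly $\sum_{\bfd}\frac{3d-1}{r}$ and $\sum_{\bfd}\frac{3d-2}{r}$ respectively.

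The only delicate step is justifying the $\mathfrak{S}_r$-invariance of the invariants. I would handle this either by the deformation/monodromy argument above, or by citing that Göttsche--Pandharipande's recursion in \cite{GP98} is manifestly symmetric in the $E_i$. Since the invariants depend only on the numerical class, either route suffices. Finiteness of the sets is automatic because the sums are over classes with nonzero invariants of bounded degree.
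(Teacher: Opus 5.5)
Your proposal is correct and follows essentially the paper's own argument: the paper also uses the dimension constraint ($c_1(\bfd)=1$, resp.\ $2$ with one point insertion), invokes the symmetry among the $E_i$'s (citing \cite[(P3)]{GP98}) to make $\sum_{\bfd}E_b(\bfd)$ independent of $b$, and then swaps the finite summations. Your monodromy/deformation-invariance justification of the $\mathfrak{S}_r$-symmetry is a valid alternative to that citation.
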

\begin{proof}
    For $\bfd\in \Eff_{d,N}(X_r)$, we have $c_1(\bfd)=3H(\bfd)-\suml_{i=1}^rE_i(\bfd)=1$ by the dimension constraint. Moreover, by   symmetry among divisors $E_i$'s (see \cite[(P3) before Remark 3.4]{GP98}), for each $b$ the quantity $\sum_{\bfd\in\Eff_{d,N}(X_r)}E_b(\bfd)$ is the same. 
 Therefore
    \begin{align*}
      \sum_{\bfd\in\Eff_{d,N}(X_r)}(3d-1)=  \sum\limits_{\bfd\in\Eff_{d,N}(X_r)}\sum\limits_{i=1}^rE_i(\bfd)=\sum_{i=1}^r\sum_{\bfd\in\Eff_{d,N}(X_r)}E_i(\bfd)=r\sum_{\bfd\in\Eff_{d,N}(X_r)}E_b(\bfd),
    \end{align*} 
    where we exchanged the order of summation since $\Eff_{d,N}(X_r)$ is a finite set. This proves the first equality. The proof for the second equality is similar.
\end{proof}

We proceed by determining conditions on $a_r$ that ensure $P_\bullet^{(a_r)}|_{q_i=1}\geq0$.
\begin{lemm}
\label{proposition-range 1 P1}
Let $3\le r\le 8$. Set $\Lambda_1(d):=\min\{\frac{3d-1}{d},\frac{rd}{3d-1}\}$ and $\Lambda_2(d):=\max\{\frac{3d-1}{d},\frac{rd}{3d-1}\}$ for $1\le d\le d_{r1}$. If
\begin{equation*}
a_r\in \bigcap_{1\leq d\leq d_{r1}}[\Lambda_1(d),\Lambda_2(d)],
\end{equation*}
then for any $q\in(0,1]$, we have $P_{H,j}^{(a_r)}(q,1)>0$ with $1\leq j\leq r$.
\end{lemm}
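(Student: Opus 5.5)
The plan is to expand $P_{H,j}^{(a_r)}(q,1)$ by the divisor axiom, group the contributing curve classes by $H(\bfd)$, and use Lemma \ref{lemma-symmetrizedsum} to replace $E_j(\bfd)$ by its average. This should show that every coefficient of a positive power of $q$ is nonnegative under the stated condition on $a_r$, while the constant term equals $1$.

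\textbf{Step 1 (reduction to two-point-free invariants).} In $\langle c_1,a_rH-\sum_i E_i,E_j^\vee\rangle_\bfd^{X_r}$ all three insertions are divisors, with $E_j^\vee=\frac{H}{a_r}-E_j$. The classical ($\bfd=0$) term is an integral of a degree-$6$ class on a surface, so it vanishes. For $\bfd\neq0$ the divisor axiom gives
\[
c_1(\bfd)\,\bigl(a_rH(\bfd)-\textstyle\sum_iE_i(\bfd)\bigr)\bigl(\tfrac{H(\bfd)}{a_r}-E_j(\bfd)\bigr)\langle\rangle^{X_r}_\bfd .
\]
Since $\dim\overline{M}_{0,0}(X_r,\bfd)=c_1(\bfd)-1$, nonvanishing forces $c_1(\bfd)=1$, i.e.\ $\sum_iE_i(\bfd)=3H(\bfd)-1$. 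Setting $q_i=1$ turns $\bfq^\bfd$ into $q^{H(\bfd)}$ by \eqref{notation-q^d}. The invariants $\langle\rangle_\bfd^{X_r}$ are nonnegative integers by \cite{GP98}, and they vanish for $H(\bfd)>d_{r1}$.

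\textbf{Step 2 (constant term).} The classes with $H(\bfd)=0$ and nonzero invariant are exactly $\bfd=E_i$, with $\langle\rangle_{E_i}=1$ by \eqref{eq--invariants}. For these, the first factor is $-\sum_kE_k\cdot E_i=1$ and the second is $-E_j\cdot E_i=\delta_{ij}$. So the $q^0$ coefficient is exactly $1$.

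\textbf{Step 3 (terms with $1\le d\le d_{r1}$).} For fixed $d=H(\bfd)$ and $N\ge1$, the sum over $\Eff_{d,N}(X_r)$ equals
\[
N\bigl(a_rd-(3d-1)\bigr)\sum_{\bfd\in\Eff_{d,N}}\Bigl(\tfrac{d}{a_r}-E_j(\bfd)\Bigr).
\]
By Lemma \ref{lemma-symmetrizedsum} this is
\[
N\,\#\Eff_{d,N}\,\bigl(a_rd-(3d-1)\bigr)\Bigl(\tfrac{d}{a_r}-\tfrac{3d-1}{r}\Bigr),
\]
which factors as
\[
\frac{N\,\#\Eff_{d,N}\,d(3d-1)}{a_r r}\Bigl(a_r-\tfrac{3d-1}{d}\Bigr)\Bigl(\tfrac{rd}{3d-1}-a_r\Bigr).
\]
This is $\ge0$ exactly when $a_r\in[\Lambda_1(d),\Lambda_2(d)]$.

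\textbf{Step 4 (conclusion).} Under the hypothesis on $a_r$, every coefficient of $q^d$ with $d\ge1$ is nonnegative. Together with the constant term $1$ from Step 2, this gives $P_{H,j}^{(a_r)}(q,1)\ge1>0$ for $q\in(0,1]$.

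The only delicate point is Step 3. The averaging of Lemma \ref{lemma-symmetrizedsum} must be applied with $N$ fixed, so that the factor $N$ and the $j$-independent first factor can be pulled out before symmetrizing; the grouping by $(d,N)$ is designed to allow exactly this. One also needs $\Eff_{d,N}$ to be finite, which holds because $d\le d_{r1}$.
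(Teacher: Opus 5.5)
Your proposal is correct and follows the paper's proof essentially step by step. Both arguments use the divisor axiom and the dimension constraint $c_1(\bfd)=1$, then Lemma \ref{lemma-symmetrizedsum} to replace $E_j(\bfd)$ by $\frac{3d-1}{r}$, which shows that the degree-$d$ coefficient is a nonnegative multiple of $\frac{1}{r}(a_rd-3d+1)\bigl(\frac{rd}{a_r}-3d+1\bigr)\ge 0$ on the stated range, and that the constant term equals $1$. Computing the $q^0$ term directly (getting $\delta_{ij}$) and grouping explicitly by $(d,N)$ simply spell out what the paper does implicitly.
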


\begin{proof}
  The divisor axiom, the dimension constraint, and Lemma \ref{lemma-symmetrizedsum} give
\begin{align*}
P_{H,j}^{(a_r)}(q,1)={}&\sum_{\bfd}\langle \rangle^{X_r}_\bfd\left((a_r-3)H(\bfd)+1\right)\left(\frac{H(\bfd)}{a_r}-E_j(\bfd)\right)q^{H(\bfd)}\\%\quad(\text{dimension constraint})\notag\\
        ={}&\sum_{\bfd}\langle \rangle^{X_r}_\bfd(a_rH(\bfd)-3H(\bfd)+1)\frac{1}{r}\left(\frac{rH(\bfd)}{a_r}-3H(\bfd)+1\right)q^{H(\bfd)}\text{ ( Lemma \ref{lemma-symmetrizedsum})}.
%&=\sum_{H(\bfd)=0}\langle  \rangle^{X_r}_{\bfd}\frac{1}{r}+\sum_{ H(\bfd)\geq 1 }\langle \rangle^{X_r}_\bfd(a_rH(\bfd)-3H(\bfd)+1)\frac{1}{r}\left(\frac{rH(\bfd)}{a_r}-3H(\bfd)+1\right)q^{H(\bfd)}.\notag
\end{align*}
For $H(\bfd)=d$ and $1\leq d\leq d_{r1}$, the assumed range of $a_r$ ensures the nonnegativity of the degree-$d$ coefficient of $P_{H,j}^{(a_r)}(q,1)$: $(a_rd-3d+1)(\frac{rd}{a_r}-3d+1)\geq 0$. Therefore
\begin{equation*}
    P_{H,j}^{(a_r)}(q,1)\ge \sum_{H(\bfd)=0}\langle  \rangle^{X_r}_{\bfd}\frac{1}{r}=\sum_{1\leq i\leq r}\langle  \rangle^{X_r}_{E_i}\frac{1}{r}=1>0.   \qedhere
\end{equation*}\end{proof}

\begin{lemm}\label{proposition-range 1 P2}
Let $3\le r\le 8$ and $a_r\in (0,\frac{rd_{r1}}{3d_{r1}-1}]$. If $q\in(0,1]$ and $q_i=1$, then we have  
\begin{equation*}
        P_{b,j}^{(a_r)}(q,1)> 0,
    \quad  1\leq b, j\leq r\text{ with } b\neq j.
\end{equation*}
\end{lemm}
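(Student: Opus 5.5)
We follow the proof of Lemma \ref{proposition-range 1 P1}: expand the entry via the divisor axiom and the dimension constraint, symmetrize with Lemma \ref{lemma-symmetrizedsum}, and then check that every coefficient of the resulting polynomial in $q$ is nonnegative, with at least one strictly positive term.

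\textbf{Expansion.} The entry $P_{b,j}^{(a_r)}(q,1)=\sum_{\bfd}\langle c_1,E_b,E_j^\vee\rangle_\bfd q^{H(\bfd)}$ has $E_j^\vee=\frac{H}{a_r}-E_j$. For $\bfd\neq0$, the dimension constraint forces the two-point invariants with insertions $c_1,E_b,E_j^\vee$ to reduce to $\langle\rangle_\bfd$ with $c_1(\bfd)=1$. The divisor axiom then gives
\[
P_{b,j}^{(a_r)}(q,1)=\sum_{\bfd\ne 0}\langle\rangle_\bfd\, E_b(\bfd)\Big(\frac{H(\bfd)}{a_r}-E_j(\bfd)\Big)q^{H(\bfd)},
\]
where the classical term vanishes because $E_b\cdot E_j^\vee=-E_b\cdot E_j=0$ for $b\ne j$.

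\textbf{Degree $0$.} Here $\bfd=E_i$, and $E_b(E_i)E_j(E_i)$ is nonzero only if $i=b=j$, which is excluded. So the contribution is
\[
\langle\rangle_{E_b}\cdot(-1)\cdot\Big(\tfrac{0}{a_r}-E_j(E_b)\Big)=(-1)(0-0)=0 .
\]
To double-check: $E_b(E_b)=-1$ and $E_j(E_b)=0$, so this term is $0$. Also $\bfd=E_j$ contributes $E_b(E_j)=0$. The degree-$0$ part therefore vanishes, and positivity must come from degree $1$.

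\textbf{Degree $d\ge1$.} The symmetrized sum $\sum_{\bfd}E_b(\bfd)E_j(\bfd)$ is not handled directly by Lemma \ref{lemma-symmetrizedsum}, which concerns linear sums. Instead I use the symmetry under permuting $E_1,\dots,E_r$ to bound the degree-$d$ coefficient from below. Write $S_1=\sum_{\bfd\in\Eff_{d,N}}E_b(\bfd)=N'\frac{3d-1}{r}$ and $S_2=\sum_{\bfd\in\Eff_{d,N}}E_b(\bfd)E_j(\bfd)$. The degree-$d$ coefficient is then $N\big(\frac{d}{a_r}S_1-S_2\big)$. By symmetry, $S_2$ equals the average of $E_bE_{j'}$ over ordered pairs $b\ne j'$, that is,
\[
S_2=\frac{1}{r(r-1)}\sum_{\bfd}\Big[(3d-1)^2-\sum_i E_i(\bfd)^2\Big].
\]
This uses $\sum_iE_i(\bfd)=3d-1$ together with the genus constraint from the proof of Lemma \ref{lemma of table}, namely $\sum_iE_i^2=(3d-1)+\sum_i E_i(E_i-1)$. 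Two routes are then available. One is Cauchy--Schwarz, $\sum E_i^2\ge (3d-1)^2/r$, which gives $S_2\le \frac{(3d-1)^2}{r^2}|\Eff_{d,N}|$. The other, simpler one is $E_bE_j\le$ the relevant product, which yields $\frac{d}{a_r}\cdot\frac{3d-1}{r}\ge\frac{(3d-1)^2}{r^2}$, i.e. $a_r\le\frac{rd}{3d-1}$.

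\textbf{Using the hypothesis on $a_r$.} The function $d\mapsto \frac{rd}{3d-1}$ is decreasing in $d$, so the hypothesis $a_r\le\frac{rd_{r1}}{3d_{r1}-1}$ gives $a_r\le \frac{rd}{3d-1}$ for every $1\le d\le d_{r1}$. Hence every degree-$d$ coefficient is nonnegative.

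\textbf{Strict positivity.} The degree-$1$ classes $H-E_i-E_k$ have $\langle\rangle=1$ by \eqref{eq--invariants}. Taking $i\ne b,j$ and $k\ne b,j$ gives terms with $E_b(\bfd)=E_j(\bfd)=0$, so those terms are zero. The class $\bfd=H-E_b-E_k$ with $k\ne j$ gives $E_b(\bfd)=1$, $E_j(\bfd)=0$, hence the strictly positive term $\frac{1}{a_r}q>0$. Such a $k$ exists because $r\ge3$.

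\textbf{Main obstacle.} The hard step is the degree-$d$ coefficients: bounding the quadratic sum $S_2$ requires the symmetry argument combined with Cauchy--Schwarz, and it is exactly here that the threshold $a_r\le\frac{rd}{3d-1}$ must emerge.
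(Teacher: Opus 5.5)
\textbf{Gap in the strict-positivity step.} Your nonnegativity argument is the paper's: the expansion via the divisor axiom, the symmetrized form of $S_2$, the Cauchy--Schwarz bound $S_2\le|\Eff_{d,N}(X_r)|\frac{(3d-1)^2}{r^2}$ (the paper phrases this as $\sigma\le(\frac{3d-1}{r})^2$), and the monotonicity of $\frac{rd}{3d-1}$. The problem is the last step. You proved nonnegativity only for the aggregated coefficient $\sum_{\bfd\in\Eff_{d,N}(X_r)}$, not for individual summands, and individual summands can be negative. The class $\bfd=H-E_b-E_j$, which you skip, contributes
\[
E_b(\bfd)\Bigl(\tfrac{1}{a_r}-E_j(\bfd)\Bigr)q=\Bigl(\tfrac{1}{a_r}-1\Bigr)q,
\]
and this is negative whenever $a_r>1$. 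So pointing to the single positive summand $\frac{1}{a_r}q$ from $H-E_b-E_k$ shows nothing about the sum.

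This fails exactly at the endpoint of the allowed range. For $r=3,4$ one has $d_{r1}=1$, so the endpoint is $a_r=\frac{rd_{r1}}{3d_{r1}-1}=\frac r2$; these are the values $a_3^*=\frac32$ and $a_4^*=2$ used later. There your lower bound for the degree-$1$ coefficient is $|\Eff_{1,1}(X_r)|\cdot\frac{2}{r}\bigl(\frac{1}{a_r}-\frac{2}{r}\bigr)=0$, and there are no terms of higher degree. Your argument therefore gives only $P_{b,j}^{(a_r)}(q,1)\ge 0$.

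\textbf{How to fix it.} Either of the following works.
\begin{itemize}
\item Argue as the paper does. For $\bfd=H-E_1-E_2\in\Eff_{1,1}(X_r)$ with $r\ge 3$, the values $E_i(\bfd)$ are $1,1,0,\dots$, so they are not all equal and Cauchy--Schwarz is strict. This gives $\sigma<(2/r)^2$, and hence a strictly positive degree-$1$ coefficient.
\item Compute the degree-$1$ coefficient directly. From $c_1(\bfd)=1$ and the genus bound, the only degree-$1$ classes with nonzero invariant are the $H-E_i-E_k$. The ones with $E_b(\bfd)\neq 0$ are $H-E_b-E_k$ for $k\ne b$, and together they give $\frac{r-2}{a_r}+\bigl(\frac{1}{a_r}-1\bigr)=\frac{r-1}{a_r}-1$. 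This is positive because $a_r\le\frac r2<r-1$ when $r\ge 3$.
\end{itemize}

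\textbf{Minor points.} The classical $\bfd=0$ term vanishes because a triple product of divisor classes on a surface is zero, not because $E_b\cdot E_j^\vee=0$. Your ``simpler route'' is not an actual argument, but you do not need it, since the Cauchy--Schwarz route already suffices.
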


\begin{proof}
Let $b\neq j$. By divisor axiom and dimension constraint, $P_{b,j}^{(a_r)}(q,1)$ is equal to
       $\sum_{\bfd}\langle \rangle^{X_r}_{\bfd}E_b(\bfd)(\frac{H(\bfd)}{a_r}-E_j(\bfd))q^{H(\bfd)}.$
  Note that the terms with $H(\bfd)=0$ do not contribute to the sum, since then $\bfd=E_i$ and $b\ne j$ implies $E_b(\bfd)(\frac{H(\bfd)}{a_r}-E_j(\bfd))=0$. Hence
   \[
   P_{b,j}^{(a_r)}(q,1)=\suml_{d,N\ge1}\suml_{\bfd\in\Eff_{d,N}(X_r)}N\left(E_b(\bfd){d\over a_r}-E_b(\bfd)E_j(\bfd)\right)q^d.
   \]
   
   For $\Eff_{d,N}(X_r)\ne\varnothing$, set $\sigma:=\left(\sum_{\bfd\in\Eff_{d,N}(X_r)}\left(E_b(\bfd)E_j(\bfd)\right)\right)\cdot|\Eff_{d,N}(X_r)|^{-1}$, which is the average over summation and independent of $b,j$ by symmetry among divisors $E_i$'s. Using $\sum_{\bfd\in\Eff_{d,N}(X_r)}\sigma=\sum_{\bfd\in\Eff_{d,N}(X_r)}E_b(\bfd)E_j(\bfd)$, we get
\begin{align}
       \sum_{\bfd\in\Eff_{d,N}(X_r)} (3d-1)^2 &=\sum_{\bfd\in\Eff_{d,N}(X_r)}\left(\sum_{i=1}^r E_i(\bfd)\right)^2\quad(\text{use } c_1(\bfd)=1)\notag\\
       &=\sum_{\bfd\in\Eff_{d,N}(X_r)}\left(\sum_{i=1}^rE_i(\bfd)^2+r(r-1)\sigma\right)\notag\\
       &\geq \sum_{\bfd\in\Eff_{d,N}(X_r)} \left(\frac{(3d-1)^2}{r}+r(r-1)\sigma \right). \label{N=1} 
\end{align}
In the last inequality we have used $c_1(\bfd)=1$ and the Cauchy-Schwarz inequality, which states that $ r(\sum_{i=1}^rE_i(\bfd)^2)\geq(\sum_{i=1}^rE_i(\bfd))^2=(3d-1)^2$, with ``$=$" holding if and only if $E_1(\bfd)=\cdots=E_r(\bfd)$.
Therefore we can conclude that $(\frac{3d-1}{r})^2\geq \sigma$. Hence
\begin{equation*}
    \sum_{\bfd\in\Eff_{d,N}(X_r)} E_b(\bfd)\frac{3d-1}{r}\overset{\text{Lemma }\ref{lemma-symmetrizedsum}}{=}\sum_{\bfd\in\Eff_{d,N}(X_r)} \left(\frac{3d-1}{r}\right)^2\geq\sum_{\bfd\in\Eff_{d,N}(X_r)}E_b(\bfd)E_j(\bfd).
\end{equation*}
Then we obtain the following inequality on the degree-$d$ coefficient of $P_{b,j}^{(a_r)}(q,1)$:
\begin{align*}
   \sum_{\bfd\in\Eff_{d,N}(X_r)}E_b(\bfd)\frac{d}{a_r}-E_b(\bfd)E_j(\bfd)\geq\sum_{\bfd\in\Eff_{d,N}(X_r)}\frac{E_b(\bfd)}{r}\frac{rd}{a_r}-3d+1.
\end{align*}
Since $rd/(3d-1)$ is decreasing about $d$ and $a_r\le rd_{r1}/(3d_{r1}-1)$, the coefficients are nonnegative for $1\le d\le d_{r1}$, then so is $P_{b,j}^{(a_r)}(q,1)$. Strict positivity of $P_{b,j}^{(a_r)}(q,1)$ follows from  that $\langle  \rangle^{X_r}_{H-E_1-E_2}=1\neq 0$  and the Cauchy--Schwarz inequality in \eqref{N=1} is strict for this $H-E_1-E_2$ when $r\geq 3$. \end{proof}
\begin{lemm}\label{proposition-range 1 P3}
Let $3\le r\le 8$, and set
\begin{equation}\label{domain 3}
D:=\begin{cases}
        (0,3),&r=3,\\
         (0,\tfrac{rd_{r2}}{3d_{r2}-2}],&r\ge4.\end{cases}
\end{equation}
For $a_r\in D$, if $q\in(0,1]$, then $P_{[pt],j}^{(a_r)}(q,1)>0$ with $1\leq  j\leq r$.
\end{lemm}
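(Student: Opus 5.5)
The plan is to compute $P_{[pt],j}^{(a_r)}(q,1)$ explicitly with the divisor axiom and the dimension constraint, just as in the proof of Lemma \ref{proposition-range 1 P1}. We then symmetrize over the exceptional divisors with the second identity of Lemma \ref{lemma-symmetrizedsum}, which reduces positivity to the sign of an explicit rational function of $d$ and $a_r$.

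\textbf{Reduction to the curve classes with $c_1(\bfd)=2$.} Recall $E_j^\vee=\frac{H}{a_r}-E_j$. The classical ($\bfd=0$) term $\int_{X_r} c_1\cup[pt]\cup E_j^\vee$ vanishes for degree reasons. For $\bfd\neq0$, the divisor axiom gives
\[
\langle c_1,[pt],E_j^\vee\rangle^{X_r}_\bfd = c_1(\bfd)\Bigl(\tfrac{H(\bfd)}{a_r}-E_j(\bfd)\Bigr)\langle[pt]\rangle^{X_r}_\bfd .
\]
The dimension constraint forces $c_1(\bfd)=2$ whenever $\langle[pt]\rangle^{X_r}_\bfd\neq0$. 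In particular the classes $E_i$ (which have $c_1=1$) do not contribute, so only $H(\bfd)=d$ with $1\le d\le d_{r2}$ occurs. Grouping the remaining classes by $\Eff^{[pt]}_{d,N}(X_r)$, we obtain
\[
P_{[pt],j}^{(a_r)}(q,1)=\sum_{d,N\ge1}\ \sum_{\bfd\in\Eff^{[pt]}_{d,N}(X_r)}2N\Bigl(\tfrac{d}{a_r}-E_j(\bfd)\Bigr)q^d .
\]

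\textbf{Symmetrization and sign of each coefficient.} By the second identity of Lemma \ref{lemma-symmetrizedsum}, the inner sum equals
\[
\frac{2N\,|\Eff^{[pt]}_{d,N}(X_r)|}{r}\Bigl(\frac{rd}{a_r}-3d+2\Bigr).
\]
This is nonnegative if and only if $a_r\le \frac{rd}{3d-2}$. The function $d\mapsto \frac{rd}{3d-2}$ is decreasing for $d\ge1$, since its derivative is $-2r/(3d-2)^2$. Hence $a_r\le \frac{rd_{r2}}{3d_{r2}-2}$ makes every coefficient with $1\le d\le d_{r2}$ nonnegative. When $r=3$ we have $d_{32}=1$, so this bound is $3$, and the condition is covered by $D=(0,3)$ as well.

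\textbf{Strict positivity.} This is the only delicate point, and it is where the open interval for $r=3$ is needed. We use the class $\bfd=H-E_1$, which has $\langle[pt]\rangle^{X_r}_{H-E_1}=1$ by \eqref{eq--invariants}. It lies in $\Eff^{[pt]}_{1,1}(X_r)$, so the $q^1$ coefficient is a positive multiple of $\frac{r}{a_r}-1$. For $r=3$ this is strictly positive precisely because $a_r<3$. For $r\ge4$, Lemma \ref{lemma of table} gives $d_{r2}\ge2$, so $a_r\le \frac{rd_{r2}}{3d_{r2}-2}\le \frac r2<r$, and again $\frac{r}{a_r}-1>0$. Since $q\in(0,1]$ is positive, a polynomial in $q$ with nonnegative coefficients and a positive $q^1$ coefficient is strictly positive. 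This yields $P_{[pt],j}^{(a_r)}(q,1)>0$ for all $1\le j\le r$.
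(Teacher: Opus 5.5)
Your proposal is correct and follows essentially the same route as the paper: the divisor axiom and dimension constraint ($c_1(\bfd)=2$), symmetrization via Lemma \ref{lemma-symmetrizedsum} to make each degree-$d$ coefficient a positive multiple of $\frac{rd}{a_r}-3d+2$, monotonicity of $\frac{rd}{3d-2}$ for nonnegativity, and the class $H-E_1\in\Eff^{[pt]}_{1,1}(X_r)$ for strict positivity (with $a_3<3$ needed exactly when $r=3$). The one input you use only implicitly is that $\langle[pt]\rangle^{X_r}_\bfd$ is a nonnegative integer (recalled just before \eqref{eq--invariants}); this is what justifies restricting the grouped sum to $N\ge1$, and the paper states it explicitly.
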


\begin{proof} Observe that, by \eqref{eq--invariants}, $H-E_1\in\Eff_{1,1}^{[pt]}(X_r)$, so the sum over $Eff_{d,N}^{[pt]}(X_r)$ is nonempty.
      By divisor axiom, dimension constraint, and Lemma \ref{lemma-symmetrizedsum}, $P_{[pt],j}^{(a_r)}(q,1)$ is equal to
$\sum_{\bfd}2\langle[pt]\rangle^{X_r}_{\bfd}\frac{1}{r}\left(\frac{rH(\bfd)}{a_r}-3H(\bfd)+2\right)q^{H(\bfd)}.$
Recall that $\langle[pt]\rangle^{X_r}_{\bfd}\in\bZ_{\ge0}$, and $\langle[pt]\rangle^{X_r}_{\bfd}\neq 0$ implies $H(\bfd)\geq 1$. So we have
\[
P^{a_r}_{[pt],j}(q,1)=\suml_{d,N\ge1}\suml_{\bfd\in\Eff_{d,N}^{[pt]}(X_r)}2N{1\over r}\left({rd\over a_r}-3d+2\right)q^d.
\]

For $r=3$, \ref{table} gives $d_{r2}=1$, hence the factor ${rd\over a_r}-3d+2$ reduces to ${1\over a_3}-{1\over 3}$. Since $a_3\in (0,3)$ by \eqref{domain 3}, we obtain $
P_{[pt],j}^{(a_3)}(q,1)>0$.\par

For $r\ge 4$, Table \ref{table} gives $d_{r2}\ge 2$.  Since
$\frac{rd}{3d-2}=\frac r3+\frac{2r}{3(3d-2)}$is strictly decreasing in $d$, \eqref{domain 3}
implies the coefficients of $P_{[pt],j}^{(a_3)}(q,1)$ are nonnegative, and
\[
\frac{rd}{a_r}-3d+2>0 \quad (1\le d\le d_{r2}-1),
\qquad
\frac{rd}{a_r}-3d+2\ge 0 \quad (d=d_{r2}).
\]
Moreover,  $H-E_1\in\Eff_{1,1}^{[pt]}(X_r)$, so its contribution is strictly positive, while all remaining contributions with $1\le d\le d_{r2}$ are nonnegative.  Therefore $P_{[pt],j}^{(a_r)}(q,1)>0$ for $r\ge 4$, this proves the lemma.
\end{proof}

\begin{prop}\label{proposition-range 1}Let $3\le r\le 8$.  Then for any $q\in (0,1]$, we have
\begin{equation*}
    P_{H,j}^*(q,1)>0,\quad P_{[pt],j}^*(q,1)>0,\quad
    P_{b,j}^*(q,1)>0,\quad 1\le b,j\le r\text{ with }b\ne j.
\end{equation*}

\end{prop}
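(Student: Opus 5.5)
The plan is to verify that the single value $a_r^*=\frac{rd_{r1}}{3d_{r1}-1}$ from Lemma \ref{lemma of table} lies simultaneously in the three admissible ranges of Lemmas \ref{proposition-range 1 P1}, \ref{proposition-range 1 P2} and \ref{proposition-range 1 P3}. The proposition then follows at once by applying these three lemmas with $a_r=a_r^*$, since $P^*_\bullet=P^{(a_r^*)}_\bullet$. No new Gromov--Witten input is needed; everything reduces to elementary inequalities between rational numbers built from the entries of Table \ref{table}.

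First I will treat Lemma \ref{proposition-range 1 P2}. Its hypothesis $a_r\in(0,\frac{rd_{r1}}{3d_{r1}-1}]$ holds trivially, with equality at the right endpoint.

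Next I will treat Lemma \ref{proposition-range 1 P3}. For $r=3$ we have $a_3^*=3/2\in(0,3)$. For $4\le r\le 8$ we need $a_r^*\le \frac{rd_{r2}}{3d_{r2}-2}$, which I will check row by row from the table:
\[
2\le \tfrac{8}{4},\quad 2\le\tfrac{10}{4},\quad \tfrac{12}{5}\le\tfrac{18}{7},\quad \tfrac{21}{8}\le\tfrac{35}{13},\quad \tfrac{48}{17}\le\tfrac{88}{31}
\]
for $r=4,\dots,8$ respectively. Equivalently, one can note that $d_{r2}\ge d_{r1}$ and $\frac{rd}{3d-2}>\frac{rd}{3d-1}$, which should give a uniform argument where $\frac{rd}{3d-2}$ is monotone.

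The only step needing real care is Lemma \ref{proposition-range 1 P1}, where $a_r^*$ must lie in $[\Lambda_1(d),\Lambda_2(d)]$ for every $1\le d\le d_{r1}$. The upper comparison $a_r^*\le \frac{rd}{3d-1}$ follows for all $d\le d_{r1}$ because $\frac{rd}{3d-1}$ is decreasing in $d$. It then suffices to show $a_r^*\ge\frac{3d-1}{d}$ for those $d$ with $\frac{3d-1}{d}<\frac{rd}{3d-1}$. When instead $\frac{3d-1}{d}\ge\frac{rd}{3d-1}$, we need $\frac{rd}{3d-1}\le a_r^*\le\frac{3d-1}{d}$; since $\frac{rd}{3d-1}\ge a_r^*$ for $d\le d_{r1}$, this can only occur at $d=d_{r1}$, where $a_r^*$ is itself the endpoint $\frac{rd}{3d-1}$.

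Because $\frac{3d-1}{d}$ is increasing, the binding case is $d=d_{r1}$ or the largest $d$ in the first regime. I expect this to be the main obstacle, since no clean closed-form argument seems available. I will therefore check the finitely many cases directly from the table:
\begin{itemize}
\item $r=3,4$: $d_{r1}=1$, the interval is $[3/2,2]$, respectively $[2,2]$, and contains $a_r^*$;
\item $r=5,6$: for $d=1$ the intervals are $[2,5/2]$ and $[2,3]$;
\item $r=7$: the intervals for $d=1,2$ are $[2,7/2]$ and $[5/2,14/5]$, and both contain $21/8$;
\item $r=8$: for $d=1,\dots,6$ the lower endpoints $\frac{3d-1}{d}$ are $2,\frac52,\frac83,\frac{11}{4},\frac{14}{5}$ and $\frac{17}{6}$. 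Each is bounded by the corresponding $\max$ and matched with $\frac{rd}{3d-1}\ge\frac{48}{17}$. In particular, at $d=6$ the interval is $[\frac{48}{17},\frac{17}{6}]$, which contains $a_8^*$ as its left endpoint.
\end{itemize}
With all hypotheses verified, the three lemmas yield the three strict positivity statements for $q\in(0,1]$.
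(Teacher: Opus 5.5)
Your proposal is correct and is essentially the paper's argument. You apply Lemmas~\ref{proposition-range 1 P1}, \ref{proposition-range 1 P2} and \ref{proposition-range 1 P3} with $a_r=a_r^*$, note that the range in Lemma~\ref{proposition-range 1 P2} holds trivially, and check the range in Lemma~\ref{proposition-range 1 P3} row by row from Table~\ref{table}. For Lemma~\ref{proposition-range 1 P1} you use two facts: $a_r^*=\frac{rd_{r1}}{3d_{r1}-1}$ is an endpoint of the interval at $d=d_{r1}$, and $\frac{3d-1}{d}\le a_r^*<\frac{rd}{3d-1}$ for $d<d_{r1}$. Your case list confirms the latter; the paper instead uses monotonicity to reduce it to the single check $\frac{3(d_{r1}-1)-1}{d_{r1}-1}\le a_r^*$.

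The one thing to drop is your aside about a ``uniform'' argument for the range in Lemma~\ref{proposition-range 1 P3}. Since $\frac{rd}{3d-2}$ is decreasing in $d$, the fact $d_{r2}\ge d_{r1}$ only gives $\frac{rd_{r2}}{3d_{r2}-2}\le\frac{rd_{r1}}{3d_{r1}-2}$, which is the wrong direction. Moreover $r=4$ is an equality case, so your explicit row-by-row verification is genuinely needed.
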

\begin{proof} By Lemmata \ref{proposition-range 1 P1}, \ref{proposition-range 1 P2}, and \ref{proposition-range 1 P3}, and with the notation therein, it remains to show that 
    \begin{align*}
        a_r^*\in\bigcap_{1\leq d\leq d_{r1}}[\Lambda_1(d),\Lambda_2(d)]\cap (0,\frac{rd_{r1}}{3d_{r1}-1}]\cap D.
    \end{align*}

Recall that  $\frac{rd}{3d-2}$ is strictly decreasing in $d\geq 1$, and so is  $\frac{rd}{3d-1}$. Checking with Table \ref{table}, we have $\frac{rd_{r1}}{3d_{r1}-1}<\frac{rd_{r2}}{3d_{r2}-2}$ for $r=3$, and $\frac{rd_{r1}}{3d_{r1}-1}\leq\frac{rd_{r2}}{3d_{r2}-2}$ for $r\geq 4$, which implies that
\begin{align*}
    a_r^*=\frac{rd_{r1}}{3d_{r1}-1}\in(0,\frac{rd_{r1}}{3d_{r1}-1}]\cap D.
\end{align*}

It remains to show that $a_r^*\in\bigcap_{1\leq d\leq d_{r1}}[\Lambda_1(d),\Lambda_2(d)]$.
For $r=3,4$, Table~\ref{table} gives $d_{r1}=1$, so that $a_r = \tfrac{rd_{r1}}{3d_{r1}-1} \in \bigcap_{1\leq d\leq d_{r1}} [\Lambda_1(d),\Lambda_2(d)]$
is automatically satisfied. For $r\ge 5$, Table~\ref{table} gives $d_{r1}\geq 2$, and moreover one can check case by case that
\[
  \frac{3(d_{r1}-1)-1} {d_{r1}-1}\leq \frac{rd_{r1}}{3d_{r1}-1}=a_r^*.
\]
Notice that $\tfrac{3d-1}{d}$ is strictly increasing in  $d\geq 1$. Therefore for all $1 \leq d \leq d_{r1}-1$,
\begin{align*}
    \frac{3d-1}{d}
    \;\leq\; \frac{3(d_{r1}-1)-1}{d_{r1}-1}
    \;\leq\; a = \frac{rd_{r1}}{3d_{r1}-1}
    \;<\; \frac{rd}{3d-1}.
\end{align*}
 Moreover  $a_r^*=\tfrac{rd_{r1}}{3d_{r1}-1}$ is one of $\Lambda_1(d_{r1}), \Lambda_2(d_{r1})$,    thus $a_r^* 
   \in \bigcap_{1\leq d\leq d_{r1}} [\Lambda_1(d),\Lambda_2(d)]$
as required, which concludes the proof.
\end{proof}

As noted at the beginning of this subsection, the sign of $P^*_\bullet(q,q_i)$ for sufficiently small $q>0$ is  governed by its leading term. This observation yields the following result.

\begin{prop}\label{proposition-range 2}
    Let $3\le r\le 8$. There exist  positive numbers $\delta_r\in (0,\frac{1}{2}]$ and $\epsilon_r$,  such that, for any $q\in(0,\epsilon_r]$ and $q_i\in[1,1+\delta_r]$, we have
\begin{equation*}
    P_{H,j}^{*}(q,q_i)>0,\quad 
    P_{[pt],j}^{*}(q,q_i)>0,\quad P_{b,j}^{*}(q,q_i)>0,\quad 1\le b,j\le r \text{ with } b\neq j.
\end{equation*} 
 
\end{prop}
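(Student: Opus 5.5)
Since $P^*_\bullet(q,q_i)\in\bR[q,q_1,\dots,q_r]$, I will write each of the three families as
\[P^*_\bullet(q,q_i)=A_\bullet(q_i)+q\,B_\bullet(q_i)+q^2R_\bullet(q,q_i)\]
and show that for $q_i$ near $1$ the lowest-order nonvanishing term in $q$ is strictly positive. The remainder $R_\bullet$ is a polynomial, so on the compact set $q\in[0,1]$, $q_i\in[1,3/2]$ it is bounded by a constant $C_r$. Once I know the leading coefficient is bounded below by some $c>0$ on $q_i\in[1,1+\delta_r]$, positivity for $q\in(0,\epsilon_r]$ with $\epsilon_r$ small follows.

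\emph{Extracting the low-order coefficients.} I will compute these only from the classes with $H(\bfd)\in\{0,1\}$, using \eqref{eq--invariants}, the divisor axiom, and the dimension constraint, in the same way as in Lemma \ref{proposition-matrix about q^0 and q^1}. The relevant classes are:
\begin{itemize}
\item[(a)] the $H(\bfd)=0$ classes $E_i$ with $\langle\rangle_{E_i}=1$ and weight $q_i$;
\item[(b)] the $H(\bfd)=1$ classes $H-E_k-E_l$ with $\langle\rangle=1$ and weight $q\prod_{i\ne k,l}q_i$;
\item[(c)] for the $[pt]$ row, the classes $H-E_k$ with $\langle[pt]\rangle=1$ and weight $q\prod_{i\ne k}q_i$.
\end{itemize}
I also need to check that no other class with $H(\bfd)\le1$ contributes. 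For $H(\bfd)=1$, the condition $c_1(\bfd)=1$ or $2$ forces the forms above.

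\emph{Case analysis of the leading terms.}
\begin{itemize}
\item For $P^*_{H,j}$, only $\bfd=E_j$ contributes at degree $0$. The factor is $(c_1\cdot E_j)\,(a^*_rH-\sum E_i)(E_j)\,(H/a^*_r-E_j)(E_j)=1\cdot1\cdot1$, so $A_{H,j}=q_j\ge1>0$.
\item For $P^*_{b,j}$ with $b\ne j$, the degree-$0$ term vanishes, as in the proof of Lemma \ref{proposition-range 1 P2}. The leading term is then $q\,B_{b,j}(q_i)$, where $B_{b,j}$ sums over the lines $H-E_k-E_l$ with $b\in\{k,l\}$. Each such term has factor $E_b(\bfd)\bigl(1/a^*_r-E_j(\bfd)\bigr)$.
\item For $P^*_{[pt],j}$, the leading term is $q\,B_{[pt],j}(q_i)$, obtained from the classes $H-E_k$. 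Each has factor $2\bigl(1/a^*_r-E_j(\bfd)\bigr)$.
\end{itemize}

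\emph{Positivity at $q_i=1$ and continuity.} At $q_i=1$, $B_{b,j}(1)$ and $B_{[pt],j}(1)$ are exactly the degree-$1$ coefficients of $P^*_{b,j}(q,1)$ and $P^*_{[pt],j}(q,1)$. The proofs of Lemmata \ref{proposition-range 1 P2} and \ref{proposition-range 1 P3} showed these are strictly positive, via the strict Cauchy--Schwarz inequality for $H-E_1-E_2$ and the contribution of $H-E_1$ respectively. As a sanity check, $B_{b,j}(1)=(r-1)/a^*_r-1$, which is positive since $a^*_r<r-1$ for $3\le r\le8$ by Table \ref{table}. Similarly $B_{[pt],j}(1)=2\bigl(r/a^*_r-1\bigr)>0$. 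Since $B_\bullet$ is a continuous polynomial, I choose $\delta_r\le\frac12$ so that $B_\bullet(q_i)\ge\frac12 B_\bullet(1)$ on $q_i\in[1,1+\delta_r]$. I then set
\[\epsilon_r<\min_\bullet\frac{B_\bullet(1)}{2C_r},\]
which gives positivity of $q\bigl(B_\bullet+qR_\bullet\bigr)$. For $P^*_{H,j}$ I additionally need $\epsilon_r<1/C'_r$, where $C'_r$ bounds the remaining terms of $P^*_{H,j}-q_j$ divided by $q$.

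The main obstacle is the bookkeeping: verifying that the $q^0$ and $q^1$ parts are exactly as described for general $q_i$, not only at $q_i=1$ where the symmetrization of Lemma \ref{lemma-symmetrizedsum} applies. The argument also relies on the positivity of the degree-$1$ coefficients being inherited from the $q_i=1$ analysis, which is only legitimate because $B_\bullet$ depends continuously on $q_i$.
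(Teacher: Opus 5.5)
Your proposal is correct and follows essentially the same route as the paper: isolate the lowest power of $q$, read off $L_{H,j}=q_j$, $L_{b,j}$ and $L_{[pt],j}$ from the classes $E_j$, $H-E_b-E_k$ and $H-E_k$ via \eqref{eq--invariants} and the divisor axiom, and use $1<a_r^*<r-1$ from Table \ref{table}. The only cosmetic difference is that you obtain positivity near $q_i=1$ by continuity from the values $(r-1)/a_r^*-1$ and $2(r/a_r^*-1)$, whereas the paper writes explicit lower bounds involving $(1+\delta)^{r-2}$ and $(1+\delta)^{r-1}$.
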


 \begin{proof}  For $\bullet=(H,j),(b,j),([pt],j)$, write $P^*_\bullet$ as the sum of its leading term and higher order  terms in $q$, namely
     \begin{equation*}
         P^*_\bullet(q,q_i)=q^k\cdot L_\bullet(q_i)+q^{k+1}\cdot R_\bullet (q,q_i),
     \end{equation*}
     where  $k\in\mathbb{Z}_{\geq 0}$, $L_\bullet$ is a nonzero polynomial in $q_i$, and $R_{\bullet}$ is a polynomial in $q,q_i$.  If $L_\bullet$ has a positive lower bound on a box $[1,1+\delta]^r$, then boundedness of $R_\bullet$ on $0\le q\le1$, $1\le q_i\le\frac32$ gives $P_\bullet^*>0$ on $(0,\epsilon]\times[1,1+\delta]^r$ for sufficiently small $\epsilon$. \par
Now  it remains to prove that there exists  $\delta_r\in(0,\frac{1}{2}]$ such that $L_\bullet(q_i)$ has a positive lower bound when $(q_i)\in[1,1+\delta_r]^r$.

     For $L_{H,j}$ with $r\geq 3$, note that $P_{H,j}^{*}(q,q_i)=\langle \rangle_{E_j}^{X_r}  q_j\cdot q^0+R_{H,j}(q,q_i)q^1$.   With $\langle \rangle_{E_j}^{X_r} $, we have $L_{H,j}(q_i)=q_j\geq 1$ for any  $(q_i)\in [1,1+\frac{1}{2}]^r$.\par

     For $L_{b,j}$ with $r\geq 3$, we have
     \begin{equation*}
         P_{b,j}^{*}(q,q_i)=\sum_{\substack{1\le k\le r\\k\ne b}}\langle c_1,E_b,E_j^\vee
         \rangle_{H-E_b-E_k}^{X_r}\left(\prod_{i\neq b,k} q_k\right)q+\sum_{\bfd:H(\bfd)\geq 2} \langle c_1,E_b,E_j^\vee
         \rangle^{X_r}_{\bfd}\mathbf{q}^\bfd .   
     \end{equation*}
     So together with $\langle \rangle^{X_r}_{H-E_b-E_k}=1$, the divisor equation gives
     \[
     L_{b,j}(q_i)=(\frac{1}{a_r^*}-1)\prod_{i\neq b,j} q_i+\sum_{k\neq b,j}\frac{1}{a_r^*} \prod_{i\neq b,k} q_i.
     \]
  By Table \ref{table}, for $r\ge 3$ one has $1<a_r^*=\frac{rd_{r1}}{3d_{r1}-1}<r-1$, and hence $\frac{1}{a_r^*}-1+\frac{r-2}{a_r^*}>0$. Therefore,  for all $(q_i)\in[1,1+\delta_{r1}^*]^r$with choosing $\delta_{r1}^*>0$ sufficiently small, we have 
\begin{equation*}
  L_{b,j}(q_i)\geq (\frac{1}{a_r^*}-1)(1+\delta_{r1})^{r-2}+\sum_{k\neq b,j}\frac{1}{a_r^*}=(\frac{1}{a_r^*}-1)(1+\delta_{r1})^{r-2}+\frac{r-2}{a_r^*}>0.
\end{equation*}
Thus $L_{b,j}(q_i)$ has a positive lower bound on $[1,1+\delta_{r1}^*]^r$.

     For $L_{[pt],j}$ with $r\geq 3$, 
     we have
     \begin{equation*}
         P^*_{[pt],j}=\sum_{1\leq k \leq r}\langle c_1, [pt],E_j^\vee\rangle_{H-E_k}^{X_r}\left(\prod_{i\neq k}q_i\right)q+\sum_{\bfd:H(\bfd)\geq 2}\langle c_1,[pt],E_j^\vee\rangle_\bfd^{X_r}\mathbf{q}^\bfd.
     \end{equation*}
     So together with $\langle[pt]\rangle_{H-E_k}=1$, the divisor equation gives
     \[
     L_{[pt],j}=2(\frac{1}{a_r^*}-1)\prod_{i\neq j}q_i+
             \sum_{k\neq j}2\frac{1}{a_r^*}\prod_{i\neq k}q_i.
     \]
By Table \ref{table}, for $r\ge 3$ one has $1<a_r^*<r-1<r$,
and hence $\left(\frac{1}{a_r^*}-1\right)+\frac{r-1}{a_r^*}=\frac{r-a_r^*}{a_r^*}>0$.
Therefore, for all $(q_i)\in[1,1+\delta_{r2}^*]^r$ with choosing $\delta_{r2}^*>0$ sufficiently small, we have
\begin{equation*}    
L_{[pt],j}(q_i)\ge2\left(\frac{1}{a_r^*}-1\right)(1+\delta_{r2}^*)^{r-1}
+\sum_{k\ne j}\frac{2}{a_r^*} =2\left(\frac{1}{a_r^*}-1\right)(1+\delta_{r2}^*)^{r-1}+\frac{2(r-1)}{a_r^*}>0.
\end{equation*}
Thus $L_{[pt],j}(q_i)$ has a positive lower bound on $[1,1+\delta_{r2}^*]^r$.
     
Finally,   taking $\delta_r=\min\{\frac{1}{2},\delta_{r1}^*,\delta_{r2}^*\}$ completes the proof.
 \end{proof}

Now we end this subsection with the proof of Proposition \ref{prop-all positive entries but 1}.
\begin{proof}[Proof of Proposition \ref{prop-all positive entries but 1}] Take $a_r = a_r^*$ and denote $(M_r^*)_{ij} := (M_r^{(a_r^*)})_{ij}$.  
By Propositions \ref{proposition-range 1}
and \ref{proposition-range 2},
the required positivity of $P_\bullet^*(q,q_i)$ for $r\geq 3$ is ensured, and it remains to consider the following four cases. We use $T$ to denote an arbitrary class in the basis $[\mathbf{1},a_r^*H-\suml_{i=1}^rE_i,E_1,...,E_r,[pt]]$.\par

\begin{enumerate}[label=(\arabic*), leftmargin=0pt, itemindent=2.pt, labelsep=0.5em]
    \item[(1)] The  \textbf{first column} is given by $\langle c_1,\mathbf{1},T^\vee\rangle_\bfd$. We have
 \begin{equation*}
 (M_r^*)_{11}=0,\quad (M_r^*)_{21}=\frac{3}{a_r^*},\quad (M_r^*)_{31}=\cdots=(M_r^*)_{r+2,1}=\frac{3}{a_r^*}-1,\quad (M_r^*)_{r+3,1}= 0. 
 \end{equation*}  
 By Table \ref{table}, we see  $(M_r^*)_{j1}>0$ for $j=2,\cdots,r+2$.

\item[(2)] The \textbf{last row} is given by  $\langle c_1,T, [pt]^\vee\rangle_\bfd^{X_r}$. We have
 \begin{equation*} \label{condition a2}
     (M_r^*)_{r+3,1}=0,\quad (M_r^*)_{ r+3,2}=3a_r^*-r,\quad (M_r^*)_{r+3,3}=\cdots =(M_r^*)_{r+3,r+2}=1,\quad (M_r^*)_{r+3,r+3}=0.
 \end{equation*}\par
By Table \ref{table}, we see $(M_r^*)_{r+3,2}>0$.

\item [(3)] 
The \textbf{first row} is given by $ \langle c_1,T, \mathbf{1}^\vee \rangle_\bfd^{X_r}$. The dimension constraint and divisor axiom give\begin{align*}
(M_r^*)_{12}&=\sum_{\bfd}\langle c_1,a_r^*H-\sum E_i,\mathbf{1}^\vee\rangle_\bfd^{X_r}\mathbf{q}^\bfd =\sum_{\bfd}2\left((a_r^*-3)H(\bfd)+2\right)\langle [pt]\rangle_\bfd^{X_r}\mathbf{q}^\bfd,\\
 (M_r^*)_{1,b+2}&=\sum_{\bfd}\langle c_1,E_b,\mathbf{1}^\vee\rangle_\bfd^{X_r}\mathbf{q}^\bfd =\sum_{\bfd}2E_b(\bfd) \langle [pt]\rangle_\bfd^{X_r}\mathbf{q}^\bfd \quad \text{for any }1\leq b\leq r,\\
 (M_r^*)_{1,r+3}&=\sum_{\bfd}\langle c_1,[pt],\mathbf{1}^\vee\rangle_\bfd^{X_r}\mathbf{q}^\bfd =\sum_{\bfd}3 \langle [pt]^2\rangle_\bfd^{X_r}\mathbf{q}^\bfd.
\end{align*}

For $(M_r^*)_{12}$, note that $a_r^*<3$, and then for $1\le d\le d_{r2}$, we have 
\begin{equation*}
    (a_r^*-3)d+2\geq (a_r^*-3)d_{r2}+2\overset{\text{Table } \ref{table}}{=}\begin{cases}
0,&   r=4,5,\\
\frac{1}{3d_{r1}-1}>0,&r=3,6,7,8,
    \end{cases}
\end{equation*}
implying that $\left((a_r^*-3)H(\bfd)+2\right)\langle[pt]\rangle_\bfd^{X_r}\ge0$ for all $\bfd$. In particular,  considering $\bfd=H-E_1$ for $r\ge3$, we get $(M_r^*)_{12}>0$.  Note that $E_b(\bfd)\langle[pt]\rangle_\bfd^{X_r}\ge0$ for all $\bfd$,  considering $\bfd=H-E_b$, we get $(M_r^*)_{1,b+2}>0$. Finally, $\langle[pt]^2\rangle_H^{X_r}>0$ gives $(M_r^*)_{1,r+3}>0$. 

\item[(4)]  The \textbf{second row} is given by $\langle c_1,T,(aH-\sum E_i)^\vee\rangle_\bfd^{X_r}$, the same argument gives
  \begin{equation*}
        \begin{aligned}
 (M_r^*)_{2,b+2}&=\sum_{\bfd}\langle c_1,E_b,(a_r^*H-\sum E_i)^\vee\rangle_\bfd^{X_r}\mathbf{q}^\bfd =\sum_{\bfd}E_b(\bfd) \frac{H(\bfd)}{a_r^*} \langle  \rangle_\bfd^{X_r}\mathbf{q}^\bfd \quad \text{for any }1\leq b\leq r,\\
 (M_r^*)_{2,r+3}&=\sum_{\bfd}\langle c_1,[pt],(a_r^*H-\sum E_i)^\vee\rangle_\bfd^{X_r}\mathbf{q}^\bfd =\sum_{\bfd}2\frac{H(\bfd)}{a_r^*} \langle [pt]\rangle_\bfd^{X_r}\mathbf{q}^\bfd.
        \end{aligned}
    \end{equation*} Note that  $E_b(\bfd)H(\bfd)\langle \rangle_\bfd^{X_r}\geq0$  
for all $\bfd$, considering $\bfd=H-E_b-E_1 $, we get   $(M_r^*)_{2,b+2}>0$.
Similarly, considering $\bfd=H-E_1 $, we get   $(M_r^*)_{2,r+3}>0$.\qedhere
\end{enumerate}
 \end{proof}

\bibliographystyle{abbrv}

\end{document}